\documentclass [11pt] {article}
\usepackage{amsfonts}
\usepackage{amssymb}
\usepackage{times}
\usepackage{graphicx}

\setlength{\topmargin}{-.5in}
\setlength{\textheight}{9in}
\setlength{\oddsidemargin}{5pt}
\setlength{\textwidth}{6.25in}

\newtheorem{lemma}{Lemma}[section]
\newtheorem{corollary}{Corollary}[section]
\newtheorem{theorem}{Theorem}

\newtheorem{question}{Question}
\newtheorem{remark}{Remark}[section]

\newtheorem{proposition}{Proposition}[section]
\newtheorem{definition}{Definition}[section]

\def\Hom{\mbox{Hom}\,}

\def\<{\langle}
\def\>{\rangle}

\def\to{\rightarrow}

\begin{document}

\title{Sparse graph limits, entropy maximization and transitive graphs}
\author{{\sc Bal\'azs Szegedy}}

\maketitle

\abstract{In this paper we describe a triple correspondence between graph limits, information theory and group theory. We put forward a new graph limit concept called log-convergence that is closely connected to dense graph limits but its main applications are in the study of sparse graph sequences. We present an information theoretic limit concept for $k$-tuples of random variables that is based on the entropy maximization problem for joint distributions of random variables where a system of marginal distributions is prescribed. We give a fruitful correspondence between the two limit concepts that has a group theoretic nature. Our applications are in graph theory and information theory. We shows that if $H$ is a bipartite graph, $P_1$ is the edge and $t$ is the homomorphism density function  then the supremum of $\log t(H,G)/\log t(P_1,G)$ in the set of all graphs $G$ is the same as in the set of graphs that are both edge and vertex transitive. This result gives a group theoretic approach to Sidorenko's famous conjecture. We obtain information theoretic inequalities regarding the entropy maximization problem. We investigate the limits of sparse random graphs and discuss quasi-randomness in our framework.}

\section{Introduction}

In the frame of graph limit theory one considers large finite graphs as approximations of analytic objects and thus graph limit theory brings tools from analysis into graph theory. 
Quite interestingly, graph limit theory branches into a number of distinct theories depending on the number of edges in the graphs that we study. If the growth rate of the number of edges is quadratic in the number of vertices in a graph sequence then it is called a {\bf dense} graph sequence and in the sub-quadratic case it is called a {\bf sparse} graph sequence. The well established theory of dense graph limits (see: \cite{LSz},\cite{LSz2},\cite{BCLSV},\cite{L}), trivializes when applied for sparse sequences.
There are various limit theories for sparse graph sequences. Most of these limit theories are defined in the very sparse setting when graphs have bounded degree and in this case almost all limit concepts are variants of the so-called Benjamini-Schramm limit concept \cite{BS}. Despite of very promising directions \cite{CBCY},\cite{NP} the picture is even less coherent in the sub-quadratic but super-linear regime. The goal of this paper is to present a circle of new ideas in this subject that emerged as byproducts of the information theoretic approach \cite{Sz} of Sidorenko's famous conjecture \cite{Sid}.

For a pair of finite graphs $H,G$ let $t(H,G)$ denote the probability that a random function from $V(H)$ to $V(G)$ maps edges to edges. One can interpret $t(H,G)$ as the density of the graph $H$ in $G$. In dense graph limit theory a sequence of graphs $\{G_i\}_{i=1}^\infty$ is called convergent if $\lim_{i\to\infty} t(H,G_i)$ exists for every $H$.
Note that if $\{G_i\}_{i=1}^\infty$ is sparse then these limit numbers are all $0$.

Sidorenko's conjecture can be stated as the inequality $t(H,G)\geq t(P_1,G)^{|E(H)|}$ where $H$ is a bipartite graph , $P_1$ is the single edge and $G$ is an arbitrary graph. This was originally formulated by Sidorenko \cite{Sid} in an equivalent form as a family of correlation inequalities for Feynmann type integrals. The conjecture is verified for various families of bipartite graphs but a complete solution is still missing.

Sidorenko's inequalities are examples for graph inequalities that are linear after taking logarithm. An advantage of writing such inequalities in a logarithmic form is that the quantity $d(H,G):=-\log(t(H,G))$ has an information theoretic meaning that can be utilized in proofs. It was observed and exploited in \cite{Sz} that $d(H,G)$ is the relative entropy (KL-divergence) of the uniform distribution on edges in $G$ with respect to the uniform measure on $V(G)\times V(G)$. Entropy is usually measured in bits however quotients of the form $d(H_1,G)/d(H_2,G)$ are dimensionless quantities that are very natural to consider since they express the number $\alpha$ for which $t(H_2,G)^\alpha=t(H_1,G)$. (Note that the quantities $d(H_1,G)/d(H_2,G)$ are similar to homomorphism domination exponents however their behavior is different.) 

Roughly speaking, log-convergence is the convergence of all fractions $d(H_1,G_i)/d(H_2,G_i)$ in a graph sequence $\{G_i\}_{i=1}^\infty$. We have to be careful about a few things in this definition. The first problem is that these quantities are not always bounded and thus we loose the convenient compactness property that every graph sequence has a convergent sub-sequence. The second problem is that if $t(H,G)=0$ then $d(H,G)$ is not defined. There are various ways of getting around these problems (chapter \ref{remarks} is partially devoted to this issue) however if we work in the bipartite setting, as we do in most of the paper, then these problems disappear. In the bipartite setting graphs are equivalent with subsets in product sets $V_1\times V_2$. In this sense, from an algebraic point of view, the bipartite setting is more general than the graph setting since graphs are symmetric subsets of $V\times V$ and thus graphs can be regarded as special objects in the bipartite setting. For example Sidorenko's conjecture in the original form was formulated in the bipartite setting and it implies the analogous conjecture in the graph setting by regarding graphs as special objects in the bipartite setting. We differentiate between graphs in the bipartite setting and graphs that happen to be bipartite (for a more detailed explanation see chapter \ref{homchap}).

A convenient fact about the bipartite setting is that $1\leq d(H,G)/d(P_1,G)\leq c_H$ holds (if the edge sets of $G$ and $H$ are not empty) for some constant $c_H\leq |V_1(H)||V_2(H)|$ depending on $H$ where $V_1(H)$ and $V_2(H)$ are the two color classes in $H$. (Note that Sidorenko's conjecture says that the optimal value of $c_H$ is $|E(H)|$ but the weaker estimate $|V_1(H)||V_2(H)|$ is easy to prove.) This implies that (in the bipartite setting) every graph sequence contains a convergent sub-sequence since log-convergence is equivalent with the convergence of the quantities $h(H,G):=d(H,G)/d(P_1,G)$. 

Convergence of the quantities $d(H,G)$ is equivalent with dense graph convergence however the normalization by $d(P_1,G)$ changes the behavior significantly. Quite surprisingly log-convergence differentiates between an infinite family of sparse random graph models depending on a sparsity exponent $0<\beta\leq 1$. In these graph models edges in $G$ are created independently with probability $|V(G)|^{2\beta-2}$. In theorem \ref{quasi-thm} we determine the limiting quantities $h(H,G)$ (as $|V(G)|$  goes to infinity) in sparse random graph models depending on the parameter $\beta$ (and another parameter $\alpha$ that comes into the picture due to the bipartite setting and disappears in the graph setting). Our proof uses techniques developed for counting small sub-graphs in sparse random graphs \cite{B} and a special property of bipartite graphs. 

From the extremal combinatorics point of view there is a very convenient property of log-limits. Let $\mathcal{L}$ denote the completion of the set of (bipartite) graphs with respect to log-convergence. The graph parameters $G\rightarrow h(H,G)$ extend continuously to $\mathcal{L}$. The space $\mathcal{L}$ is compact and embeds naturally into $\mathbb{R}^\infty$ as a convex subset using the parameters $h(H,-)$ (this convexity is proved in lemma \ref{convex}). The Krein-Milman theorem implies that the log-limit space $\mathcal{L}$ is the closed convex hull of its extreme points. We can regard these extreme points as ergodic elements in $\mathcal{L}$.

Note that despite of the fact that graphons (two variable measurable functions representing dense graph limits) form a convex space there is no known natural convex structure on the dense graph limit space $\mathcal{W}$ consisting of equivalence classes of graphons. A large body of work in extremal combinatorics (in the dense setting) can be described as studying the properties of finite dimensional projections of the dense graph limit space using maps of the form $$W\rightarrow (t(H_1,W),t(H_2,W),\dots,t(H_k,W))\in\mathbb{R}^k$$ for a finite set of graphs $\{H_i\}_{i=1}^k$. These projections are compact but typically non convex and rather complicated shapes. Due to extensive research for decades there is a complete description of the two dimensional shape when $H_1$ is a single edge and $H_2$ is the triangle \cite{R}. However such a complete description is known only in a very few cases. Finite projections of the log-limit space $\mathcal{L}$ using  $h(H,-)$ are convex sets in $\mathbb{R}^k$ which gives hope for a nicer description using extremal points. 

Most of this paper deals with a fruitful correspondence between log-limits and an information theoretic limit concept for joint distributions of random variables. The information theoretic limit concept is based on an entropy maximization problem that is interesting on its own right. Quite surprisingly group theory comes naturally into the picture . 

Let us consider (finite) joint distributions $X=(X_1,X_2,\dots,X_k)$ of $k$ random variables. It is a classical fact that if we prescribe the individual distributions of $X_i$ for every $i$ then the joint distribution that maximizes the entropy with these marginals is the independent coupling of the given distributions. It is natural to investigate the more complicated entropy maximization problem in which we prescribe a system of marginal distributions of the form $\{X_i\}_{i\in L_j}$ where $L=\{L_j\}_{j=1}^n$ is a set system in $\{1,2,\dots,k\}$. In general it is not clear whether such a system of marginal constraints can be satisfied by any joint distribution at all. However if $L$ is the edge set of a bipartite graph $H$ and the marginal distributions are all the same, say $Y=(Y_1,Y_2)$, then there is at least one such joint distribution (see chapter \ref{emax}) and thus the entropy maximization problem makes sense. 
It turns out that the mutual information $d^*(H,Y)$ of the entropy maximizing distribution (which is unique) shares many properties with the logarithmic subgraph densities $d(H,G)$. It is worth mentioning that the entropy maximizing distribution is a Gibbs distribution and consequently a Markov random field on the vertices of $H$. 
We study the convergence notion corresponding to the normalized quantities $h^*(H,Y):=d^*(H,Y)/d^*(P_1,Y)$.
Convergence of the quantities $d^*(H,Y)$ is analogous to dense graph limits and convergence of $h^*(H,Y)$ is analogous to log-convergence. We say that a sequence of joint distributions $\{Y^i=(Y^i_1,Y^i_2)\}_{i=1}^\infty$ is $h^*$-convergent if $\lim_{i\to\infty}h^*(H,Y^i)$ exists for every bipartite graph $H$ with no isolated points.

A central result in this paper (see theorem \ref{main}) connects the parameters $h(H,-)$ and $h^*(H,-)$ through log-convergence.

\medskip

{\it For every finite joint distribution $Y=(Y_1,Y_2)$ there is sequence of graphs $\{G_i\}_{i=1}^\infty$ that are both edge and vertex transitive with $\lim_{i\to\infty} h(H,G_i)=h^*(H,Y)$.}

\medskip

We call graphs that are both edge and vertex transitive {\it edge-vertex transitive graphs}. (Note that in the bipartite setting automorphisms have to respect the color classes and so edge-vertex transitivity is equivalent with the property that the graph is edge transitive and has no isolated vertices.) Edge-vertex transitive graphs are fully described through the pair of stabilizers of the two endpoints of an edge and thus edge-vertex transitive graphs are given by triples $G,T_1,T_2$ where $G$ is a finite group and $T_1,T_2$ are subgroups in $G$. Subgraph densities of edge-vertex transitive graphs can be characterized through the number of solutions of equation system in finite groups and thus theorem \ref{main} puts the quantities $h^*(H,Y)$ into a group theoretic context.  

If $G$ is a graph and $X_G=(X_1,X_2)$ is a uniformly chosen random edge with endpoints $X_1$ and $X_2$ then we can apply theorem \ref{main} for $X_G$ and obtain a graph sequence $\{G_i\}_{i=1}^\infty$ of edge-vertex transitive graphs with $\lim_{i\to\infty} h(H,G_i)=h^*(H,X_G)\geq h(H,G)$. We can regard the graphs $G_i$ as uniformized (or smoothened) versions of $G$. Thus we encode valuable information from $G$ in highly symmetric and homogeneous objects. Using this correspondence we obtain a group theoretic and an information theoretic characterization of the values $c(H):=\sup_G h(H,G)$. Sidorenko's conjecture for a bipartite graph $H$ is equivalent with $c(H)=|E(H)|$. Since this is checked for various graphs $H$ we obtain new inequalities in group theory and information theory (see corollary \ref{cor-app1}.) On the other hand we also obtain that Sidorenko's conjecture holds for $H$ if and only if $t(H,G)\geq t(P_1,G)^{|E(H)|}$ holds in every edge-vertex transitive graphs $G$.

\section{Graph homomorphisms and dense graph limits}\label{homchap}

A graph homomorphism is a map from the vertex set $V(H)$ of a graph $H$ to the vertex set $V(G)$ of a graph $G$ such that edges are mapped to edges. Let $\Hom(H,G)$ denote the set of all homomorphisms. The (homomorphism) density of $H$ in $G$ is the probability that a random map from $V(H)$ to $V(G)$ is a homomorphism. We denote the homomorphism density by $t(H,G)$ and we have that $t(H,G)=|\Hom(H,G)||V(G)|^{-|V(H)|}$.

Graph homomorphisms can be studed in the context of bipartite graphs. Let $\mathcal{B}$ denote the set of finite graphs in which the vertices are partitioned into two classes labeled by the natural numbers $1$ and $2$ such that the endpoints of every edge have different label. If $G\in\mathcal{B}$ then we denote by $V_1(G)$ and $V_2(G)$ the partition classes given by the label. The edge set can be viewed as a subset in $V_1(G)\times V_2(G)$.
A homomorphism between two graphs in $\mathcal{B}$ is defined as a graph homomorphism with the extra property that it preserves the label of every vertex. The homomorphism density $t(H,G)$ inside $\mathcal{B}$ is defined as the probability that a random label preserving map from $V(H)$ to $V(G)$ is a graph homomorphism. As the next example shows, it is important to distinguish between graphs that happen to be bipartite and graphs in $\mathcal{B}$. Let $P_1$ be the single edge. One can calculate that $t(P_1,P_1)=1/2$. However if we view $P_1$ as an element in $\mathcal{B}$ with endpoints labeled by $1$ and $2$ then $t(P_1,P_1)=1$. 

\medskip

Homomorphis densities in both the general and in the bipartite contexts satisfies the following properties (see \cite{L}).

\medskip

\noindent{\bf Blow up invariance:}~ If $G_m$ is obtained from the graph $G$ by replacing each vertex by $m$-vertices and replacing each edge by the complete bipartite graph $K_{m,m}$ then $t(H,G)=t(H,G_m)$ holds for every $m\in\mathbb{N}$. In the bipartite setting, if $G_{m,n}$ is obtained from $G$ by replacing each vertex in $V_1(G)$ by $m$ points, each vertex in $V_2(G)$ by $n$ points and each edge by $K_{m,n}$ then $t(H,G)=t(H,G_{m,n})$.

\medskip

\noindent{\bf Right multiplicativity:}~ For two graphs $G_1,G_2$ let $G_1\times G_2$ denote graph with vertex set $V(G_1)\times V(G_2)$ and edge set $\{((v_1,w_1),(v_2,w_2))~|~(v_1,v_2)\in E(G_1),(w_1,w_2)\in E(G_2)\}$.  For two graphs $G_1$ and $G_2$ in $\mathcal{B}$ we define $G_1\times G_2$ the graph in $\mathcal{B}$ with $V_1(G_1\times G_2)=V_1(G_1)\times V_1(G_2)$ and $V_2(G_1\times G_2)=V_2(G_1)\times V_2(G_2)$. Edges are defined in the same way as in the non-bipartite setting by adding that $v_1,v_2\in V_1(G)$ and $v_2,w_2\in V_2(G)$. In both settings we have that $t(H,G_1\times G_2)=t(H,G_1)t(H,G_2)$.

\medskip

\noindent{\bf Left multiplicativity:}~ If $H_3$ is the disjoint union of $H_1$ and $H_2$ then $t(H_3,G)=t(H_1,G)t(H_2,G)$ holds for every $G$.

\medskip

\noindent{\bf One point graph:}~ If $P_0$ is the one point graph then $t(P_0,G)=1$ holds for every $G$. Note that in the bipartite setting there are two one point graphs up to isomorphism.  

\medskip

\noindent{\bf Monotonicity:}~ If $H'$ is defined on $V(H)$ and $E(H')\subseteq E(H)$ then $t(H',G)\geq t(H,G)$ holds for all graphs $G$.

\medskip

In the framework of the so-called dense graph limit theory, a sequence of graphs $\{G_i\}_{i=1}^\infty$ is called convergent if $\lim_{i\to\infty}t(H,G_i)$ exists for every finite graph $H$. Convergence in the bipartite setting can be defined in the same way. The limit of a convergent graph sequence can be represented by the {\it trivial limit object} which a graph parameter of the form $f:\mathcal{G}\rightarrow [0,1]$ where $\mathcal{G}$ is the set of (isomorphism classes of) finite graphs and $f(H):=\lim_{i\to\infty}t(H,G)$. Similarly, in the bipartite setting we get graph parameters of the form $f:\mathcal{B}\rightarrow[0,1]$ as trivial limit objects. Let $\mathcal{W}$ denote the set of all possible trivial limit objects for convergent graph sequences and let $\mathcal{W}_b$ denote the set of all possible trivial limit objects for convergent sequences in $\mathcal{B}$. It is clear that both $\mathcal{W}$ and $\mathcal{W}_b$ are closed compact sets in $\mathbb{R}^\infty$ with the product topology. However the structure of these sets is very far from being trivial. For example $\mathcal{W}$ and $\mathcal{W}_b$ are not convex. Projections of these sets to finitely many coordinates represented by finitely many graphs $\{H_i\}_{i=1}^k$ are very important in extremal graph theory since these finite dimensional shapes encode all possible inequalities between the densities of $\{H_i\}_{i=1}^k$. Even the simple looking case when $H_1$ is an edge and $H_2$ is the triangle took decades to completely describe. This two dimensional non-convex region has a boundary that is the union of countably many algebraic curves. 

\section{Edge-vertex transitive bipartite graphs}

In this paper we will need graph automorphisms in the bipartite setting. An automorphism of a bipartite graph $H\in\mathcal{B}$ is an ivertible homomorphism from $H$ to itself. In other words automorphisms in the bipartite setting are normal graph automorphisms with the extra condition that they preserve labels. We say that a bipartite graph $H\in\mathcal{B}$ is {\bf edge-vertex transitive} if it is both edge and vertex transitive. Note that in the bipartite setting $H$ is called vertex transitive if the automorphism group acts transitively on both $V_1(H)$ and $V_2(H)$. Edge-vertex transitivity in the bipartite setting is equivalent with the property that a graph is edge transitive and contains no isolated vertices.
The next definition and lemma shows that edge-vertex transitive graphs in $\mathcal{B}$ can be all described using only a pair of subgroups in a finite group and thus they are highly group theoretic objects. 

\begin{definition} Let $G$ be a finite group and let $T_1,T_2\leq G$ be subgroups in $G$. We denote by $\mathcal{G}(G,T_1,T_2)$ the graph $H$ in $\mathcal{B}$ such that $V_i(H):=\{gT_i~|~g\in G\}$ is the lef coset space according to $T_i$ for $i=1,2$ and $E(H)=\{(gT_1,gT_2)~|~g\in G\}$.
\end{definition}

\begin{lemma}\label{lem-bas1} The set of edge-vertex transitive graphs in $\mathcal{B}$ is the same as the set of graphs $\mathcal{G}(G,T_1,T_2)$ where $G,T_1,T_2$ are finite groups with $T_1,T_2\leq G$. 
\end{lemma}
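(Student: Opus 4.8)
\medskip\noindent\textbf{Proof proposal.}
The plan is to prove the two inclusions separately, the substance being an orbit--stabilizer dictionary that identifies edge-vertex transitive bipartite graphs with pairs of subgroups of a finite group (in the spirit of the classical coset, i.e.\ Sabidussi-type, description of vertex-transitive graphs). For ``$\supseteq$'' I would check directly that each $H=\mathcal{G}(G,T_1,T_2)$ is edge-vertex transitive: for $h\in G$ the rule $gT_i\mapsto hgT_i$ defines a well defined, invertible, label-preserving bijection $\phi_h$ of $V(H)$, and since $\phi_h$ carries $(gT_1,gT_2)$ to $(hgT_1,hgT_2)$ --- again an edge of the prescribed form --- it is an automorphism of $H$ in $\mathcal{B}$. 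The assignment $h\mapsto\phi_h$ is a homomorphism $G\to\mathrm{Aut}(H)$; transitivity of the $G$-action on each coset space $G/T_i$ gives vertex transitivity, and the fact that $\phi_h$ carries the edge indexed by $g$ to the edge indexed by $hg$ gives edge transitivity. (In particular $H$ has no isolated vertices, since $gT_i$ lies on the edge $(gT_1,gT_2)$.)

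For ``$\subseteq$'' let $H\in\mathcal{B}$ be edge-vertex transitive; I may assume $H$ has at least one edge, since a vertex-transitive graph with an isolated vertex is edgeless. I would take $G:=\mathrm{Aut}(H)$ (automorphisms in $\mathcal{B}$, hence a finite group), fix one edge $e=(u,v)$ with $u\in V_1(H)$ and $v\in V_2(H)$, and set $T_1:=\mathrm{Stab}_G(u)$ and $T_2:=\mathrm{Stab}_G(v)$. Vertex transitivity together with the orbit--stabilizer theorem makes $gT_1\mapsto g\cdot u$ and $gT_2\mapsto g\cdot v$ well defined bijections onto $V_1(H)$ and $V_2(H)$, which assemble into a label-preserving vertex bijection $\Phi\colon\mathcal{G}(G,T_1,T_2)\to H$. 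It then remains to show that $\Phi$ both preserves and reflects edges, i.e.\ that $(aT_1,bT_2)$ is an edge of $\mathcal{G}(G,T_1,T_2)$ if and only if $(a\cdot u,\,b\cdot v)$ is an edge of $H$.

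After acting by the automorphism $a^{-1}$ and writing $c:=a^{-1}b$, this collapses to the single equivalence
\[
(u,\,c\cdot v)\ \mbox{is an edge of}\ H \qquad\Longleftrightarrow\qquad c\in T_1T_2 ,
\]
whose right-hand side is (for $c=a^{-1}b$) exactly the condition $aT_1\cap bT_2\neq\emptyset$ that defines the edges of $\mathcal{G}(G,T_1,T_2)$. The implication ``$\Leftarrow$'' is easy: if $c=t_1t_2$ with $t_i\in T_i$, then the automorphism $t_1^{-1}$ carries $(u,\,c\cdot v)$ to $(t_1^{-1}\cdot u,\,t_2\cdot v)=(u,v)=e$, so $(u,\,c\cdot v)=t_1\cdot e$ is an edge. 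The implication ``$\Rightarrow$'' is where the hypothesis genuinely enters: given an edge $(u,\,c\cdot v)$, edge transitivity of $H$ supplies an automorphism $\phi\in G$ with $\phi\cdot u=u$ and $\phi\cdot v=c\cdot v$, whence $\phi\in T_1$ and $c^{-1}\phi\in T_2$ and therefore $c\in T_1T_2$. I expect this step --- using edge transitivity to manufacture $\phi$ --- to be the only real content of the lemma, everything else being coset and stabilizer bookkeeping. Finally I would record the harmless observation that $\mathcal{G}(G,T_1,T_2)$ always contains the edge $(T_1,T_2)$, so that the claimed equality of sets is compatible with the convention (edgeless graphs excluded) under which ``edge-vertex transitive'' is used here.
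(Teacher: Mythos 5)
Your proof is correct and takes essentially the same route as the paper's: the forward inclusion via the left translation action of $G$ on the coset spaces, and the reverse inclusion by taking $G$ to be the automorphism group and $T_1,T_2$ the stabilizers of the two endpoints of a fixed edge. The only difference is that you spell out the edge preservation/reflection step via the criterion $a^{-1}b\in T_1T_2$, which the paper compresses into the single observation that the orbit of $(v_1,v_2)$ is the whole edge set.
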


\begin{proof} It is clear that every graph $\mathcal{G}(G,T_1,T_2)$ is edge-vertex transitive since the action $(gT_1,gT_2)^h:=(hgT_1,hgT_2)$ is transitive on the edges and on both left coset spaces. For the other direction let $H$ be an edge vertex transitive graph with automorphism group $G$ and let $(v_1,v_2)\in E(G)$ be a fixed edge. Let $T_i$ denote the stabilizer of $v_i$ for $i=1,2$. Then each vertex in $V_i(H)$ is uniquely determined by a left coset of $T_i$. The orbit of $(v_1,v_2)$ under the action of $G$ is the set of all edges and thus $H$ is isomorphic to $\mathcal{G}(G,T_1,T_2)$.
\end{proof}

\medskip

Note that $\mathcal{G}(G,T_1,T_2)$ is connected if and only if $T_1$ and $T_2$ generate the group $G$. It is also worth mentioning that there is a group theoretic interpretation of $t(H,\mathcal{G}(G,T_1,T_2))$ in terms of the number of solutions of an equation system in $G$ .
For a bipartite graph $H\in\mathcal{B}$ (with no isolated point) let $W(H,G,T_1,T_2)$ denote set of vectors $\{g_e\}_{e\in E(H)}$ in $G^{E(H)}$ satisfying 
$g_eg_f^{-1}\in T_i$ whenever $e\cap f\in V_i$. These equations express the fact that $g_eT_i=g_fT_i$ for every pair of edges $e,f$ with $e\cap f\in V_i$ and thus for every element $v\in V_i$ there is a unique coset $t_vT_i$ with the property that $g_eT_i=t_vT_i$ holds whenever $e$ contains $v$. This implies that the map $v\rightarrow t_vT_i$ (for $v\in V_i$) is a homomorphism of $H$ to $\mathcal{G}(G,T_1,T_2)$ and it is easy to see that every homomorphism is obtained in $|T_1\cap T_2|^{|E(H)|}$ ways. It follows that $$|\Hom(H,\mathcal{G}(G,T_1,T_2))|=|W(H,G,T_1,T_2)||T_1\cap T_2|^{-|E(H)|}$$ and thus
$$t(H,\mathcal{G}(G,T_1,T_2))=|W(H,G,T_1,T_2)||T_1|^{|V_1(H)|}|T_2|^{|V_2(H)|}|T_1\cap T_2|^{-|E(H)|}|G|^{-|V(H)|}$$

\section{Logarithmic graph limits}

The main motivation for our convergence notion comes from the study of graph theoretic inequalities that are linear in the logarithms of subgraph densities. It is well known for example that $t(C_4,G)\geq t(P_2,G)^2$ holds where $C_4$ is the $4$-cycle and $P_n$ is the path with $n$-edges. It was conjectured by Sidorenko that $t(H,G)\geq t(P_1,G)^{|E(H)|}$ holds whenever $H$ is bipartite. (This is conjectured in both in the bipartite and in the normal setting, but the bipartite version is stronger.) Sidorenko's conjecture is checked for a variety of graphs $H$. For a recent survey see \cite{Sz}.  These inequalities are all linear inequalities for the quantities $\log t(H,G)$. It is very natural to represent every graph $G$  by the graph parameter $H\mapsto -\log t(H,G)$ where the negative sign is used to get a non-negative number. It was pointed out in \cite{Sz} that $d(H,G):=-\log t(H,G)$ is the relative entropy (also called KL-divergence) of the uniform distribution on $\Hom(H,G)$ with respect to the uniform distribution on $V(G)^{V(H)}$. For studying linear inequalities between the quantities $d(H,G)$ it is enough to view the infinite dimensional vector $(d(H,G))_{H\in\mathcal{G}}$ up to a multiplication with scalar. In other words we wish to work in the infinite dimensional projective space. The loss of information by the projective view seems to be minor since we work with vectors in an infinite dimensional space and we loose basically one dimension. However this minor information loss turns out to be fundamental. It leads to a graph limit notion which is non-trivial for many interesting sparse graph sequences. We say that a graph sequence $\{G_i\}_{i=1}^\infty$ is {\bf log-convergent} if $\lim_{i\to\infty}d(H_1,G_i)/d(H_2,G_i)$ exists for every pair of graphs $H_1,H_2$ where both $H_1$ and $H_2$ have at least one edge. The limit here might be infinite. Another type of singularity that one has to be careful with is when $t(H_2,G)=0$ and thus $d(H_2,G_i)$ is not defined. It turns out however that in the bipartite setting we can completely avoid these infinities and thus our limit notion behaves nicer. In this paper we study our limit concept in the bipartite case and we will discuss the graph case in chapter \ref{remarks}. 

\begin{lemma}\label{kozott} For $H,G\in\mathcal{B}$ with $E(H)\neq\emptyset,E(G)\neq\emptyset$ we have that
$$d(P_1,G)\leq d(H,G)\leq c_H d(P_1,G)$$ for some constant $c_H$ depending on $H$.
\end{lemma}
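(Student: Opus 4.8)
The plan is to establish the two inequalities separately, in each case replacing $H$ by an extremal configuration and invoking the structural properties (monotonicity, left multiplicativity, the one--point normalization) recorded above.

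\emph{Lower bound.} The inequality $d(P_1,G)\le d(H,G)$ is equivalent to $t(H,G)\le t(P_1,G)$. I would fix one edge $e\in E(H)$ and let $H'$ be the graph on the same labelled vertex set $V(H)$ with $E(H')=\{e\}$. Monotonicity gives $t(H,G)\le t(H',G)$; on the other hand $H'$ is the disjoint union of $P_1$ with $|V_1(H)|-1$ isolated vertices in the first colour class and $|V_2(H)|-1$ in the second, so left multiplicativity together with $t(P_0,G)=1$ for both one--point graphs yields $t(H',G)=t(P_1,G)$. This proves the lower bound and, in passing, shows $t(H,G)>0$ whenever $E(G)\neq\emptyset$, so that $d(H,G)$ is well defined.

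\emph{Upper bound.} Here I would take $c_H:=|V_1(H)|\,|V_2(H)|$ and first reduce to a complete bipartite graph. Write $a=|V_1(H)|$ and $b=|V_2(H)|$ (both $\ge 1$ since $E(H)\neq\emptyset$). Since $E(H)\subseteq E(K_{a,b})$ on a common labelled vertex set, monotonicity gives $t(H,G)\ge t(K_{a,b},G)$, and it suffices to prove $t(K_{a,b},G)\ge t(P_1,G)^{ab}$. This is a classical consequence of convexity. Let $M$ be the bipartite adjacency matrix of $G$ and put $p=|V_1(G)|$, $q=|V_2(G)|$, so that $t(P_1,G)=|E(G)|/(pq)$. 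Summing over the $b$ coordinates in $V_2(G)$ first gives
$$t(K_{a,b},G)=\frac{1}{p^a q^b}\sum_{\vec x\in V_1(G)^a} c(\vec x)^b,$$
where $c(\vec x)$ is the number of common neighbours of the $a$-tuple $\vec x$. Applying Jensen's inequality to $s\mapsto s^b$ over the $p^a$ tuples $\vec x$, then using the identity $\sum_{\vec x}c(\vec x)=\sum_{y\in V_2(G)}\deg(y)^a$, and applying Jensen once more to $s\mapsto s^a$ over the $q$ vertices $y\in V_2(G)$ with $\sum_y\deg(y)=|E(G)|$, the powers of $p$ and $q$ reassemble into $(pq)^{-ab}$ and one obtains exactly $t(K_{a,b},G)\ge(|E(G)|/(pq))^{ab}=t(P_1,G)^{ab}$. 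Taking $-\log$ throughout gives $d(H,G)\le ab\, d(P_1,G)$, as desired.

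Nothing in this argument is genuinely hard: the only real content is the double application of Jensen's inequality (equivalently, two Cauchy--Schwarz type steps). One could shorten it by quoting the already known validity of Sidorenko's conjecture for complete bipartite graphs, which gives the same bound, but the self-contained computation is preferable for exposition. The one place where care is needed is the bipartite normalization: $t(K_{a,b},G)$ averages over maps sending the two colour classes of $K_{a,b}$ into $V_1(G)$ and $V_2(G)$ respectively, so the degree sum identity has to be applied on the correct side of the bipartition, and it is precisely this that makes the exponents of $p$ and $q$ cancel at the end.
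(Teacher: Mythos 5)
Your proof is correct and follows essentially the same route as the paper: both bounds come from monotonicity, with the upper bound reduced to the complete bipartite graph $K_{a,b}$ and then to Sidorenko's inequality $t(K_{a,b},G)\ge t(P_1,G)^{ab}$. The only difference is that the paper simply cites Sidorenko's result for complete bipartite graphs where you prove it by a double application of Jensen's inequality, and that you spell out the lower bound via left multiplicativity and the one-point graph where the paper just invokes monotonicity; both are harmless elaborations of the same argument.
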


\begin{proof} The inequality $d(P_1,G)\leq d(H,G)$ follows from $t(P_1,G)\geq t(H,G)$ which is a consequence of the monotonicity of $t$. The monotonicity of $t$ also implies that $d(H,G)\leq d(K,G)$ where $K$ is the complete bipartite graph on the vertex set $V(H)=V_1(H)\cup V_2(H)$. Since $K$ satisfies Sidorenko's conjecture \cite{Sid} we have that $t(K,G)\geq t(P_1,G)^{|V_1(H)||V_2(H)|}$ and thus $d(K,G)\leq |V_1(H)||V_2(H)|d(P_1,G)$. It follows that the statement of the lemma is satisfied with $c_H:=|V_1(H)||V_2(H)|$.
\end{proof}

Note that if $H$ statisfies Sidorenko's conjecture then $c_H=|E(H)|$ is the optimal choice in lemma \ref{kozott}.
Let $h(H,G):=d(H,G)/d(P_1,G)$. If $G$ is a complete graph then we have that $d(P_1,G)=d(H,G)=0$. In this case it is natural to define $h(H,G):=|E(H)|$ since this is the limit of $h(H,G_n)$ when $G_n$ tends to $G$ in the normalized cut norm. However if $G$ or $H$ has no edges (empty graph) there is no natural meaning of $h(H,G)$. Let $\mathcal{B}_0$ denote the set of graph $G$ in $\mathcal{B}$ such that $E(G)\neq\emptyset$. Note that lemma \ref{kozott} can also be written as $1\leq h(H,G)\leq c_H$ where $G\in\mathcal{B}_0$ and $H\in\mathcal{B}_0$.

\begin{lemma}\label{comp} A graph sequence $\{G_i\}_{i=1}^\infty$ in $\mathcal{B}_0$ is log-convergent if and only if $\lim_{i\to\infty} h(H,G_i)$ exists for every $H\in\mathcal{B}_0$. Every graph sequence in $\mathcal{B}_0$ has a log-convergent subsequence.
\end{lemma}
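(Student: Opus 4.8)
The plan is to derive both assertions from Lemma \ref{kozott}, which confines each quantity $h(H,G)$ to the compact interval $[1,c_H]$ whenever $G\in\mathcal{B}_0$.

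First I would dispatch the easy implication. If $\{G_i\}_{i=1}^\infty$ is log-convergent then, since $P_1\in\mathcal{B}_0$ and every $H\in\mathcal{B}_0$ has at least one edge, applying the definition of log-convergence to the pair $(H_1,H_2)=(H,P_1)$ gives at once that $\lim_{i\to\infty}h(H,G_i)=\lim_{i\to\infty}d(H,G_i)/d(P_1,G_i)$ exists.

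For the converse I would assume that $\lim_{i\to\infty}h(H,G_i)$ exists for every $H\in\mathcal{B}_0$, write $a(H)\in[1,c_H]$ for the limit, and show that every ratio $d(H_1,G_i)/d(H_2,G_i)$ (with $H_1,H_2$ each having an edge) converges. The mechanism is the identity
$$\frac{d(H_1,G_i)}{d(H_2,G_i)}=\frac{d(H_1,G_i)/d(P_1,G_i)}{d(H_2,G_i)/d(P_1,G_i)}=\frac{h(H_1,G_i)}{h(H_2,G_i)},$$
which is legitimate when $d(P_1,G_i)>0$ and which holds by the convention $h(H,G_i):=|E(H)|$ when $G_i$ happens to be complete bipartite. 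The numerator tends to $a(H_1)$ and the denominator to $a(H_2)$; the one point that needs an argument rather than bookkeeping is that Lemma \ref{kozott} supplies the uniform lower bound $h(H_2,G_i)\geq 1$ (the elementary half of that estimate, a consequence of monotonicity of $t$), so the denominator stays bounded away from $0$ and the quotient converges to the finite number $a(H_1)/a(H_2)$. Hence $\{G_i\}_{i=1}^\infty$ is log-convergent. I expect this to be the only real obstacle: without the bound $h\geq 1$ the denominators could collapse and the ratios $d(H_1,G_i)/d(H_2,G_i)$ could oscillate even when all the $h(H,G_i)$ converge.

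For the final claim, the existence of a log-convergent subsequence, I would use that the isomorphism classes of graphs in $\mathcal{B}_0$ form a countable set and that, by Lemma \ref{kozott}, $\{h(H,G_i)\}_{i=1}^\infty\subseteq[1,c_H]$ for each fixed $H\in\mathcal{B}_0$. A standard diagonal extraction then yields a subsequence along which $h(H,G_i)$ converges for every $H\in\mathcal{B}_0$ simultaneously, and by the converse direction just established this subsequence is log-convergent.
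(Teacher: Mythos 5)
Your proposal is correct and follows essentially the same route as the paper: both directions reduce to the identity $d(H_1,G_i)/d(H_2,G_i)=h(H_1,G_i)/h(H_2,G_i)$ together with the bounds $1\leq h(H,G)\leq c_H$ from Lemma \ref{kozott}, and the subsequence claim follows by the same compactness/diagonal argument. Your write-up merely makes explicit the points the paper leaves implicit (the denominator being bounded away from $0$, and the diagonal extraction over the countably many $H$).
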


\begin{proof} If $\{G_i\}_{i=1}^\infty$ is log-convergent then by definition $h(H,G_i)$ is a convergent sequence if $E(H)\neq\emptyset$. On the other hand, by finiteness of limits, we have that $\lim_{i\to\infty}d(H_1,G_i)/d(H_2,G_i)=\lim_{i\to\infty}h(H_1,G_i)/h(H_2,G_i)=\lim_{i\to\infty}h(H_1,G_i)/\lim_{i\to\infty}h(H_2,G_i)$.
The second statement follows from $1\leq h(H,G)\leq c_H$.
\end{proof}

Similarly to dense graph limits we can represent convergent graph sequences by trivial limit objects. For a graph $G\in\mathcal{B}_0$ let $\tau(G)\in\mathbb{R}^{\mathcal{B}_0}$ denote the vector $(h(H,G))_{H\in\mathcal{B}_0}$.
A graph sequence $\{G_i\}_{i=1}^\infty$ in $\mathcal{B}_0$ is log-convergent if and only if $\{\tau(G_i)\}_{i=1}^\infty$ is a convergent sequence in the topological space $\mathbb{R}^{\mathcal{B}_0}$. The closure $\mathcal{L}$ of the set $\{\tau(G)\}_{G\in\mathcal{B}_0}$ is the graph log-limit space.  

\begin{lemma}\label{convex} The graph log-limit space $\mathcal{L}$ is a convex compact set in $\mathbb{R}^{\mathcal{B}_0}$.
\end{lemma}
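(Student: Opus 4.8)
\medskip
\noindent\textbf{Proof proposal.} The plan is to handle compactness and convexity separately. Compactness should be immediate from Lemma~\ref{kozott}: that lemma confines every coordinate $h(H,G)$ of $\tau(G)$ to the interval $[1,c_H]$, and the exceptional value $|E(H)|$ (arising when $G$ has no proper non-edge) also satisfies $1\le |E(H)|\le|V_1(H)||V_2(H)|=c_H$. Hence $\{\tau(G)\}_{G\in\mathcal{B}_0}$, and therefore its closure $\mathcal{L}$, lies inside the box $\prod_{H\in\mathcal{B}_0}[1,c_H]$, which is compact by Tychonoff's theorem; a closed subset of a compact space is compact.

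For convexity I would first pass to a cleaner dense subset. Let $S'=\{\tau(G)\mid G\in\mathcal{B}_0,\ d(P_1,G)>0\}$. The only points $\tau(G)$ not in $S'$ are those with $d(P_1,G)=0$, i.e.\ $t(P_1,G)=1$; this forces $G$ to be complete bipartite, so $t(H,G)=1$ for every $H$, and by the convention following Lemma~\ref{kozott} this means $\tau(G)$ is the single point $\mathbf{e}:=(|E(H)|)_{H\in\mathcal{B}_0}$. So it suffices to show $\mathbf{e}\in\overline{S'}$ and that $\overline{S'}$ is convex; then $\mathcal{L}=\overline{S'}$ is convex. For $\mathbf{e}\in\overline{S'}$ I would take $G_m=K_{m,m}$ with one edge deleted and estimate the relevant densities: a short computation gives $d(H,G_m)=|E(H)|m^{-2}+O(m^{-3})$ and $d(P_1,G_m)=m^{-2}+O(m^{-4})$, so $h(H,G_m)\to|E(H)|$ for every $H$ while $d(P_1,G_m)>0$, whence $\tau(G_m)\to\mathbf{e}$.

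The core step is: for $G_1,G_2\in\mathcal{B}_0$ with $d(P_1,G_i)>0$, the whole segment $[\tau(G_1),\tau(G_2)]$ lies in $\mathcal{L}$. The idea is to combine iterated right multiplicativity with the categorical product. Writing $G^{\times a}$ for the $a$-fold product $G\times\cdots\times G$ (still in $\mathcal{B}_0$), right multiplicativity gives $t(H,G_1^{\times a}\times G_2^{\times b})=t(H,G_1)^a t(H,G_2)^b$, hence $d(H,G_1^{\times a}\times G_2^{\times b})=a\,d(H,G_1)+b\,d(H,G_2)$, and dividing by the same identity for $P_1$,
$$h\bigl(H,\,G_1^{\times a}\times G_2^{\times b}\bigr)=w\,h(H,G_1)+(1-w)\,h(H,G_2),\qquad w=\frac{a\,d(P_1,G_1)}{a\,d(P_1,G_1)+b\,d(P_1,G_2)}.$$
The point I want to stress is that the mixing weight $w$ does \emph{not} depend on $H$, so the right-hand side is a genuine convex combination of $\tau(G_1)$ and $\tau(G_2)$ in $\mathbb{R}^{\mathcal{B}_0}$. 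As $(a,b)$ ranges over positive integers, $w/(1-w)=(a/b)\cdot d(P_1,G_1)/d(P_1,G_2)$ runs over a dense subset of $(0,\infty)$, so $w$ runs over a dense subset of $(0,1)$; picking $(a,b)$ with $w\to\lambda$ yields $\tau(G_1^{\times a}\times G_2^{\times b})\to\lambda\tau(G_1)+(1-\lambda)\tau(G_2)$ coordinatewise, i.e.\ in $\mathbb{R}^{\mathcal{B}_0}$. Since $\mathcal{L}$ is closed this limit lies in $\mathcal{L}$ for every $\lambda\in(0,1)$, and the endpoints are trivially in $\mathcal{L}$.

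To conclude, for arbitrary $p,q\in\mathcal{L}=\overline{S'}$ and $\lambda\in[0,1]$ I would take $\tau(G_n)\to p$ and $\tau(G'_n)\to q$ with $G_n,G'_n$ nondegenerate, apply the core step to get $\lambda\tau(G_n)+(1-\lambda)\tau(G'_n)\in\mathcal{L}$ for each $n$, and let $n\to\infty$, using that $\mathcal{L}$ is closed. The main obstacle here is not analytic but organizational: isolating the degenerate point $\mathbf{e}$ produced by complete bipartite graphs and verifying it can be approached within $S'$. Once that is out of the way, convexity is powered by the single structural fact that $d(\cdot,-)$ is additive under the categorical product, so tensoring with integer multiplicities realizes every rational mixing ratio, and the uniform bounds of Lemma~\ref{kozott} promote this to every real ratio in the limit. \qed
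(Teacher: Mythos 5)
Your proof is correct and follows essentially the same route as the paper: convexity comes from right multiplicativity making $d(\cdot,-)$ additive over categorical products with an $H$-independent mixing weight, realized up to density by integer multiplicities, and compactness comes from the uniform bounds $1\le h(H,G)\le c_H$ of Lemma~\ref{kozott}. The only difference is organizational — you first prove segments between points $\tau(G_1),\tau(G_2)$ lie in $\mathcal{L}$ and then pass to the closure, whereas the paper diagonalizes by choosing multiplicities $n_i,k_i$ along the approximating sequences — and you are somewhat more careful than the paper about the degenerate case $d(P_1,G)=0$.
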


\begin{proof} Let $x=\lim_{i\to\infty}\tau(G_i)$ and $y=\lim_{i\to\infty}\tau(K_i)$ for some log-convergent graph sequences $\{G_i\}_{i=1}^\infty$ and $\{K_i\}_{i=1}^\infty$ in $\mathcal{B}_0$. Let $0<\alpha<1$ be a real number. Let $L_i$ denote the graph $G_i\times G_i\times\dots \times G_i\times K_i\times K_i\times\dots\times K_i$ where $G_i$ is used $n_i$-times and $K_i$ is used $k_i$ times for some sequence $\{n_i\}_{i=1}^\infty$ and $\{k_i\}_{i=1}^\infty$ of natural numbers with $\lim_{i\to\infty}d(P_1,G_i)d(P_1,K_i)^{-1}n_ik_i^{-1}=\alpha(1-\alpha)^{-1}.$ We have for every graph $H\in\mathcal{B}_0$ that $$h(H,L_i)=(d(H,G_i)n_i+d(H,K_i)k_i)/(d(P_1,G_i)n_i+d(P_1,K_i)k_i)=$$
$$h(H,G_i)(1+d(P_1,K_i)d(P_1,G_i)^{-1}k_in_i^{-1})^{-1}+h(H,K_i)(1+d(P_1,G_i)d(P_1,K_i)^{-1}n_ik_i^{-1})^{-1}.$$ It follows that $$\lim_{i\to\infty} h(H,L_i)=\alpha\lim_{i\to\infty}h(H,G_i)+(1-\alpha)\lim_{i\to\infty}h(H,K_i)$$ holds for every $H\in\mathcal{B}_0$ and thus $\lim_{i\to\infty}\tau(L_i)=\alpha x+(1-\alpha) y$. The compactness of $\mathcal{L}$ follows from lemma \ref{comp}.
\end{proof}

\begin{remark} It follows from lemma \ref{convex} that every finite dimensional projection of the graph log-limit space $\mathcal{L}$ to coordinates given by $H_1,H_2,\dots,H_k\in\mathcal{B}_0$ is a convex compact set. It is not clear whether these convex sets are polytopes i.e. convex hulls of fine point sets. One dimensional projections are closed intervals but the endpoints are not known for every graph $H$. Sidorenko's conjecture says that $h(H,G)\leq |E(H)|$.
\end{remark}

\begin{definition}\label{ergodic} We say that $W\in\mathcal{L}$ is ergodic if $W$ is an extremal point in $\mathcal{L}$.
\end{definition}

Note that according to the Krein-Milman theorem $\mathcal{L}$ is the closed convex hull of ergodic limit objects.
The most natural metric that metrizes log-convergence comes from the definition itself. For two graphs $H_1,H_2\in\mathcal{B}_0$ let us define $$\kappa(G_1,G_2):=\sum_{H\in\mathcal{B}_0}|h(H,G_1)-h(H,G_2)|2^{-|V(H)|^2}.$$
Since there are at most $2^{n^2/2}$ graphs $H$ with $|V(H)|=n$ and $|h(H,G_1)-h(H,G_2)|\leq |V(H)|^2$ we have that the above sum converges. It is clear that convergence in $\kappa$ is equivalent with log-convergence and $\mathcal{L}$ is the completion of $\mathcal{B}_0$ with respect to $\kappa$.

\section{Entropy maximization with marginal constraints}\label{emax}

In this chapter we investigate the following problem. Assume that for a set of random variables $X_1,X_2,\dots X_n$ the joint distributions for certain subsets of the indices $\{1,2,\dots,n\}$ are prescribed. With this constraint what is the maximal possible entropy of the joint distribution of $(X_i)_{i=1}^n$? A trivial example is when the distribution of each individual $X_i$ is given. In this case the entropy is maximized if the random variables are independent. Another example is when the joint distribution of $(X_1,X_2)$ and $(X_2,X_3)$ are given. In this case the two given marginals must have the same marginal on $X_2$ otherwise there is no joint distribution for $(X_i)_{i=1}^3$ satisfying this constraint. If the marginals are given in a consistent way than the so-called conditionally independent coupling of $(X_1,X_2)$ and $(X_2,X_3)$ maximizes the entropy. 

For a precise formulation of the general problem we need some notation.

\begin{definition} Let $H\subseteq 2^V$ be a set system (also called hypergraph) on a finite set $V$. For each $v\in V$ let $F_v$ be a finite set and assume that for each set $S\in H$ there is a probability measure $\mu_S$ on $\prod_{v\in S}F_v$. We denote by $\mathcal{P}(\{\mu_S\}_{S\in H})$ the set of all probability measures $\mu$ on $\prod_{v\in V}F_v$ satisfying $\mu\circ\pi_S^{-1}=\mu_S$ for every $S\in H$ where $\pi_S:\prod_{v\in V}F_v\rightarrow \prod_{v\in S}F_v$ denotes the projection to the coordinates in $S$. We say that the system $\{\mu_S\}_{S\in H}$ is a {\bf consistent system of marginals} if $\mathcal{P}(\{\mu_S\}_{S\in H})$ is not empty. 
\end{definition}

\begin{definition} Let $H\subseteq 2^V$ be a set system and for each $v\in V$ let $F_v$ be a finite set. A probabilty measure $\mu$ on $\prod_{v\in V} F_v$ is called an $H$-Gibbs measure if there are non-negative functions $f_S:\prod_{v\in S}F_v\rightarrow\mathbb{R}\cup\{0\}$  for every $S\in H$ such that $$\mu(x)=z^{-1}\prod_{S\in H}f_S(\pi_S(x))$$ where $z$ is the sum of $\prod_{S\in H}f_S(\pi_S(x))$ over all $x\in\prod_{v\in V}F_v$.
\end{definition}

Using classical tools we get the following proposition. 

\begin{proposition}\label{prop-emax} Assume that $\{\mu_S\}_{S\in H}$ is a consistent system of marginals. Then there is a unique maximizer $\mu$ inside the set $\mu\in\mathcal{P}(\{\mu_S\}_{S\in H})$. Furthermore the measure $\mu$ is an $H$-Gibbs measure.
\end{proposition}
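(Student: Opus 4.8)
The plan is to treat the two assertions separately, each by a classical convexity argument. For existence and uniqueness: by the consistency hypothesis $\mathcal{P}(\{\mu_S\}_{S\in H})$ is a nonempty, closed, convex subset of the finite-dimensional simplex of probability measures on $\prod_{v\in V}F_v$, hence compact; and Shannon entropy $\mu\mapsto-\sum_{x}\mu(x)\log\mu(x)$ (with the convention $0\log 0:=0$) is continuous and \emph{strictly} concave there. A strictly concave continuous function on a nonempty compact convex set attains its maximum at a unique point, which we denote by $\mu$.

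For the Gibbs form I would analyze the first-order optimality of $\mu$ on its support $A:=\supp\mu$. A preliminary observation is that $A$ contains $\supp\nu$ for every $\nu\in\mathcal{P}(\{\mu_S\}_{S\in H})$: if not, pick $x_0$ with $\nu(x_0)>0=\mu(x_0)$ and move along the segment $t\mapsto(1-t)\mu+t\nu$, which stays in $\mathcal{P}(\{\mu_S\}_{S\in H})$; the contribution $-t\,\nu(x_0)\log(t\,\nu(x_0))$ to the entropy forces the right derivative at $t=0$ to be $+\infty$, contradicting maximality of $\mu$. Consequently every competitor is supported on $A$, the constraints $\mu(x)\ge 0$ are inactive at $\mu$, and $\mu$ maximizes $-\sum_{x\in A}\mu(x)\log\mu(x)$ over the affine slice cut out by $\sum_{x\in A}\mu(x)=1$ and $\sum_{x\in A:\ \pi_S(x)=y}\mu(x)=\mu_S(y)$ for $S\in H$. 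The gradient of the entropy at $\mu$, namely $x\mapsto-\log\mu(x)-1$, must therefore be a linear combination of the gradients of these affine constraints, which reads
$$-\log\mu(x)=\lambda_0+\sum_{S\in H}\lambda_{S,\pi_S(x)}\qquad(x\in A)$$
for suitable real numbers $\lambda_0$ and $\lambda_{S,y}$. Setting $f_S(y):=\exp(-\lambda_{S,y})$ for $y$ in the image $\pi_S(A)$ and $f_S(y):=0$ otherwise gives $\mu(x)=z^{-1}\prod_{S\in H}f_S(\pi_S(x))$ for every $x\in A$.

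The step I expect to be the real obstacle is upgrading this to an identity on all of $\prod_{v\in V}F_v$: one still has to check that $\prod_{S\in H}f_S(\pi_S(x))=0$ whenever $\mu(x)=0$, that is, that each zero $x$ of $\mu$ has a block $S$ with $\pi_S(x)\notin\pi_S(A)$; equivalently, that $A$ is block-rectangular, $A=\bigcap_{S\in H}\pi_S^{-1}(\pi_S(A))$. I would prove this by showing that if every projection $\pi_S(x_0)$ is attained by some support point of $\mu$, then some $\nu\in\mathcal{P}(\{\mu_S\}_{S\in H})$ charges $x_0$, and then apply the derivative observation above to conclude $\mu(x_0)>0$. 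This is exactly where the consistency of the marginal system and the combinatorics of $H$ must be used, and it is the part that needs genuine care about degenerate marginals; when all $\mu_S$ have full support the multipliers above are finite and every $f_S$ is strictly positive, so the difficulty does not arise. An alternative, more constructive route to the same statement is to identify $\mu$ as the information projection of the uniform measure onto $\mathcal{P}(\{\mu_S\}_{S\in H})$ and realize it as the limit of iterated proportional fitting, whose steps multiply the current measure by functions of the individual maps $\pi_S$.
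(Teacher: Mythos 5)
Your existence-and-uniqueness argument is exactly the paper's: $\mathcal{P}(\{\mu_S\}_{S\in H})$ is nonempty, convex and compact, entropy is continuous and strictly concave, hence there is a unique maximizer. For the Gibbs form the paper simply invokes ``the principle of maximal entropy'' to assert that $\mu$ has the form $Z\exp(\lambda_1f_1+\dots+\lambda_mf_m)$ with the $f_i$ indicators of events $\pi_S(y)=x$; that is, it performs your Lagrange-multiplier computation in one line and never mentions the support issue. So the obstacle you isolate is not an artifact of your route: it is an unacknowledged gap in the paper's own proof, and your first-order analysis on $\supp\mu$ (including the observation that $\supp\mu\supseteq\supp\nu$ for every feasible $\nu$, via the infinite right derivative of $-t\log t$) is the careful version of what the paper asserts.

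However, your proposed repair of the last step fails, and in fact the global product form is false for general $H$. The claim you would need --- that $x_0\notin\supp\mu$ forces $\pi_S(x_0)\notin\pi_S(\supp\mu)$ for some $S$ --- already fails for $V=\{1,2,3\}$, $F_v=\{0,1\}$, $H=\{\{1,2\},\{1,3\},\{2,3\}\}$. Put $g(x)=1_{\{x_1=x_2\}}+1_{\{x_1=x_3\}}+1_{\{x_2\neq x_3\}}$; then $g\leq 2$ with equality exactly on the six-point set $A=\{0,1\}^3\setminus\{(1,0,0),(0,1,1)\}$. Let $\{\mu_S\}$ be the pairwise marginals of the uniform measure on $A$ (a consistent system). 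Since $\mathbb{E}_\nu(g)$ is a function of the marginals alone, every $\nu\in\mathcal{P}(\{\mu_S\})$ satisfies $\mathbb{E}_\nu(g)=2$ and is therefore supported in $A$; combined with your derivative observation, the entropy maximizer $\mu$ has support exactly $A$. But each projection of $(1,0,0)$ is attained on $A$ (its $\pi_{12}$, $\pi_{13}$, $\pi_{23}$ values are realized by $(1,0,1)$, $(1,1,0)$, $(0,0,0)$ respectively), so any product $\prod_{S}f_S(\pi_S(x))$ with $f_S\geq 0$ that vanishes at $(1,0,0)$ must also vanish at a point of $A$, where $\mu>0$: no global representation exists, so your ``block-rectangularity'' lemma is false and so is the proposition as literally stated. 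What is true, and what your multiplier argument does prove, is that $\mu$ has the product form on its support; this is all that is used later (the Markov-field step in Lemma~\ref{lem-emax4}), and the global form does hold in the benign cases (all $\mu_S$ of full support, or $H$ decomposable, e.g.\ a forest). Your alternative route via iterative proportional fitting meets the same phenomenon: the limit lies in the closure of the exponential family, which for such $H$ is strictly larger than the set of global products.
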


\begin{proof} Using that marginals of convex combinations of measures are the corresponding convex combinations of the marginals we obtain that the set $\mathcal{P}(\{\mu_S\}_{S\in H})$ is a convex set. It is also clear that $\mathcal{P}(\{\mu_S\}_{S\in H})$ is a compact set. The entropy function is a strictly concave continuous function and thus it has a unique maximizer $\mu$ in $\mathcal{P}(\{\mu_S\}_{S\in H})$.
The marginal constraints for $\mu$ can be written in the form of $\mu(\pi_S^{-1}(x))=\mu_S(x)$ where $S$ runs through $H$ and $x$ runs through $\prod_{v\in S}F_v$. These equations are linear equations of the form $\sum_{y\in F}\mu(y)1_x(\pi_S(y))=\mu_S(x)$ for the values of $\mu$ where $F=\prod_{v\in V}F_v$. The principle of maximal entropy says that the entropy maximizer $\mu$ has the form $Z{\rm exp}(\lambda_1f_1+\lambda_2f_2+\dots+\lambda_m f_m)$ for some constants $Z$ and $\{\lambda_i\}_{i=1}^m$ in $\mathbb{R}$ where each $f_i:F\rightarrow\mathbb{R}$ is a function of the form $f_i(y)=1_x(\pi_S(y))$ for some $S\in H$ and $x\in\prod_{v\in S}F_v$. This proves that $\mu$ is an $H$-Gibbs measure.
\end{proof}

In the rest of this chapter we focus on special systems of marginal constraints that are mostly related to our graph limit notion. Roughly speaking we wish to require that in a system of random variables $\{X_v\}_{v\in V}$ indexed by the vertices of a bipartite graph $H$ the marginals $(X_v,X_w)$ are the same distribution $(X_1,X_2)$ for every edge $(v,w)\in E(H)$ with $v\in V_1(H),w\in V_2(H)$. It will turn out that such marginal constraints are always consistent. 

We formulate our definitions in a more general hypergraph setting. Assume that $V=\cup_{i=1}^k V_i$ and that $H\subseteq 2^V$ is such that $|S\cap V_i|=1$ holds for every $S\in H$ and $1\leq i\leq k$. It follows that $|S|=k$ holds for every $S\in H$. In combinatorics $H$ is called a $k$-partite $k$-uniform hypergraph. The set $H$ can also be regarded as a subset in $V_1\times V_2\times\dots\times V_k$. The sepecial case of $k=2$ is the same as our set $\mathcal{B}$ of bipartite graphs with labeled color classes. 

Assume that for every $i$ we associate the same finite set $F_i$ with every element $v\in V_i$.
In other words there is a given bijection $\phi_v:F_v\rightarrow F_i$ for every $1\leq i\leq k$ and $v\in V_i$. For every $S\in H$ there is a bijection $\phi_S:\prod_{v\in S}F_v\rightarrow\prod_{i=1}^kF_i$ given by $\prod_{v\in S}\phi_v$.
Let $\nu$ be a probability measure on $\prod_{i=1}^k F_i$ and let $\mu_S:=\nu\circ\phi_S$ for every $S\in H$.
A convenient fact about the system $\{\mu_S\}_{S\in H}$ is that it is always a consistent system of marginals. This can be seen in the following way. Let $\psi:\prod_{i=1}^k F_i\rightarrow\prod_{v\in V} F_v$ defined by  $$\psi(a_1,a_2,\dots,a_k)=(\phi_v^{-1}(a_i))_{1\leq i\leq k,v\in V_i}.$$
The measure $\mu$ defined by $\mu(T):=\nu(\psi^{-1}(T))$ is in $\mathcal{P}(\{\mu_S\}_{S\in H})$. Assume that the measure $\nu$ is given by the joint distribution $X=\{X_1,X_2,\dots,X_k\}$ where $X_i$ takes values in $F_i$ for $1\leq i\leq k$. Then we denote by $Q(H,X)$ the set $\mathcal{P}(\{\mu_S\}_{S\in H})$. In other words $Q(H,X)$ is the set of all joint distributions $\{X_v\}_{v\in V(H)}$ such that the marginals on the edges of $H$ are all equal to $X$. The consistency of the marginal constraints in this setting justifies the next definition.

\begin{definition}\label{infdens} Let $H$ be a $k$-partite $k$-uniform hypergraph and let $X=(X_1,X_2,\dots, X_k)$ be a joint distribution of $k$ random variables with finite distributions. We denote by $m(H,X)$ the maximal entropy in the set $Q(H,X)$. We introduce the related quantites $$d^*(H,X):=-m(H,X)+\sum_{i=1}^k\mathbb{H}(X_i)|V_i(H)|,$$ $$t^*(H,X):=e^{-d^*(H,X)}$$ and
$$h^*(H,X):=d^*(H,X)/d^*(E_k,X)$$ where $E_k$ denotes the single $k$-edge. (If $X$ is an independent system of random variables then $0=d^*(H,X)=d^*(E_k,X)$. In this case we define $h^*(H,X):=|E(H)|$.)
\end{definition}

Note that $d^*(H,X)$ is the mutual information in the entropy maximizing joint distribution in $Q(H,X)$. In particular $d^*(E_k,X)$ is the mutual information of $(X_1,X_2,\dots,X_k)$. Since mutual information is non-negative it follows that $d^*(H,X)$ is non-negative. 

\begin{remark} If $X=(X_1,X_2,\dots,X_k)$ is not a finite distribution but has finite mutual information (this can be defined through relative entropy) then one can define $d^*(H,X)$ as the infimum of mutual information in the set $Q(H,X)$. 
\end{remark}

 In the next few lemmas we prove various facts about $h^*$ and $d^*$ showing that $d^*(H,X)$ is the analogue of $d(H,G)$, $t^*(H,X)$ is the analogue of $t(H,G)$ and $h^*(H,X)$ is the analogue of $h(H,G)$.
Then we finish the chapter with a theorem that formulates a far reaching connection between $h$ and $h^*$. Let $\mathcal{M}^k$ denote the set of $k$-uniform $k$-partite finite hypergraphs with no isolated points.

\begin{lemma}\label{lem-emax5}  If $H,H'\in\mathcal{M}_k$ are defined on the same vertex set and $E(H')\subseteq E(H)$ then $h^*(H',X)\leq h^*(H,X)$ holds for every finite distribution $X=(X_1,X_2,\dots,X_k)$. 
\end{lemma}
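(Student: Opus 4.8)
The plan is to reduce the inequality $h^*(H',X) \le h^*(H,X)$ to the monotonicity of $d^*$, namely $d^*(H',X) \le d^*(H,X)$, together with the fact that the denominators $d^*(E_k,X)$ are the same on both sides. Since $h^*(H,X) = d^*(H,X)/d^*(E_k,X)$ and $d^*(E_k,X) > 0$ whenever $X$ is not an independent system (and the degenerate case is handled by the convention $h^*(H,X) = |E(H)| \ge |E(H')| = h^*(H',X)$ when $X$ is independent, using that removing edges cannot increase the edge count), it suffices to prove the unnormalized statement $d^*(H',X) \le d^*(H,X)$.

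First I would recall from Definition \ref{infdens} that
$$d^*(H,X) = -m(H,X) + \sum_{i=1}^k \mathbb{H}(X_i)|V_i(H)|,$$
and since $H$ and $H'$ share the same vertex set we have $|V_i(H')| = |V_i(H)|$ for every $i$, so the sums cancel and the claim becomes simply $m(H',X) \ge m(H,X)$: removing edges can only increase the maximum achievable entropy. The key observation is a containment of feasible sets. If $\mu \in Q(H,X)$ — that is, $\mu$ is a joint distribution on $\prod_{v \in V(H)} F_v$ whose marginal on each edge $S \in E(H)$ equals (the pushforward of) $\nu$ — then, because $E(H') \subseteq E(H)$, the measure $\mu$ automatically satisfies all the marginal constraints indexed by $E(H')$ as well. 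Hence $Q(H,X) \subseteq Q(H',X)$. Since $m(H,X)$ and $m(H',X)$ are defined as the maximal entropy over these sets respectively, maximizing over the larger set $Q(H',X)$ gives $m(H',X) \ge m(H,X)$, which is exactly what we need.

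The one point requiring a little care — and the only place this could go wrong — is the degenerate case where $d^*(E_k,X) = 0$, i.e.\ $X$ is an independent system. Then $Q(E_k,X)$ contains the independent coupling as its entropy maximizer and $m(E_k,X) = \sum_i \mathbb{H}(X_i)$; one should check that in this situation $d^*(H,X) = 0$ too, so that the $h^*$ values for \emph{every} $H \in \mathcal{M}_k$ are governed purely by the convention $h^*(H,X) := |E(H)|$, and then the inequality is the trivial $|E(H')| \le |E(H)|$. This follows because the product measure $\bigotimes_{v \in V(H)} (\text{law of } X_{i(v)})$ lies in $Q(H,X)$ and achieves entropy $\sum_i \mathbb{H}(X_i)|V_i(H)|$, forcing $m(H,X) = \sum_i \mathbb{H}(X_i)|V_i(H)|$ and hence $d^*(H,X) = 0$. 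So no genuine obstacle arises; the whole argument is the set-containment $Q(H,X) \subseteq Q(H',X)$ plus bookkeeping of the definitions.
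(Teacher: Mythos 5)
Your proof is correct and follows essentially the same route as the paper: the containment $Q(H,X)\subseteq Q(H',X)$ gives $m(H,X)\leq m(H',X)$, hence $d^*(H,X)\geq d^*(H',X)$ and the claim. Your extra care with the degenerate case $d^*(E_k,X)=0$ (independent $X$) is a detail the paper's one-line proof glosses over, but it does not change the argument.
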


\begin{proof} We have that $Q(H,X)\subseteq Q(H',X)$ and thus $m(H,X)\leq m(H',X)$. Consequently we have $d^*(H,X)\geq d^*(H',X)$ implying $h^*(H,X)\geq h^*(H',X)$.
\end{proof}

\begin{lemma}\label{lem-emax4} Let $X=(X_1,X_2)$ be a finite distribution and assume that $H$ is a tree with at least one edge. Then $h^*(H,X)=|E(H)|$. 
\end{lemma}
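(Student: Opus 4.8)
The plan is to reduce the problem to the two basic cases—the single edge $P_1$ and the star (i.e. all edges sharing one endpoint)—and then build up a general tree by induction on the number of edges, using the fact that a leaf edge can be "attached" to an existing tree without changing the entropy maximum beyond what a conditionally independent coupling forces. Concretely, write $H$ as $H'$ together with a pendant edge $e=(v,w)$, where $v$ is a new leaf and $w$ already belongs to $H'$ (this is possible since a tree with at least two edges always has such a decomposition; the base case is $H=P_1$, for which $d^*(P_1,X)=d^*(P_1,X)$ trivially gives $h^*(P_1,X)=1=|E(P_1)|$).

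The main step is to show $d^*(H,X)=d^*(H',X)+d^*(P_1,X)$ whenever $H$ is obtained from $H'$ by adding a pendant edge at an existing vertex $w$. Given any joint distribution in $Q(H',X)$, extend it to $\{X_u\}_{u\in V(H)}$ by letting $X_v$ be conditionally independent of everything else given $X_w$, with the prescribed conditional law (the conditional distribution of the appropriate coordinate of $X$ given the other coordinate). This extension lies in $Q(H,X)$, and by the chain rule for entropy its entropy is the entropy of the $H'$-distribution plus $\mathbb{H}(X_v\mid X_w)$; maximizing over $Q(H',X)$ shows $m(H,X)\ge m(H',X)+\mathbb{H}(X_1\mid X_2)$ (or the symmetric version depending on the color of $w$). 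Conversely, for any distribution in $Q(H,X)$ one has $\mathbb{H}(\{X_u\}_{u\in V(H)})\le \mathbb{H}(\{X_u\}_{u\in V(H')})+\mathbb{H}(X_v\mid X_w)\le m(H',X)+\mathbb{H}(X_1\mid X_2)$, since conditioning reduces entropy and the restriction to $V(H')$ lies in $Q(H',X)$. Hence $m(H,X)=m(H',X)+\mathbb{H}(X_1\mid X_2)$. Plugging into the definition of $d^*$, the extra $\mathbb{H}(X_i)|V_i(H)|$ terms account for the new vertex $v$, and a short computation (using $d^*(P_1,X)=\mathbb{H}(X_1)+\mathbb{H}(X_2)-\mathbb{H}(X_1,X_2)=\mathbb{H}(X_1)-\mathbb{H}(X_1\mid X_2)$, and symmetrically) gives exactly $d^*(H,X)=d^*(H',X)+d^*(P_1,X)$.

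With this additivity in hand, the lemma follows by induction: $d^*(H,X)=|E(H)|\,d^*(P_1,X)$, so $h^*(H,X)=|E(H)|$ directly from the definition when $d^*(P_1,X)\neq 0$; and when $X$ is independent, $h^*(H,X)=|E(H)|$ by the convention in Definition \ref{infdens}. I expect the main obstacle to be the careful bookkeeping in the chain-rule argument—checking that the conditionally independent extension genuinely lies in $Q(H,X)$ (i.e. that every edge marginal, not just the new one, is still $X$, which holds because the old marginals are untouched and the new edge marginal is $X$ by construction of the conditional law), and tracking the vertex-count terms in $d^*$ so that the telescoping is exact. One should also note the orientation subtlety: whether $w\in V_1$ or $V_2$ determines which conditional entropy appears, but since both $\mathbb{H}(X_1)-\mathbb{H}(X_1\mid X_2)$ and $\mathbb{H}(X_2)-\mathbb{H}(X_2\mid X_1)$ equal the mutual information $d^*(P_1,X)$, the final identity is symmetric and the induction goes through uniformly.
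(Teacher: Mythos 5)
Your proof is correct, and it follows the same induction on $|E(H)|$ via pendant-edge removal as the paper, but it justifies the key step by a different mechanism. The paper invokes Proposition \ref{prop-emax}: the entropy maximizer in $Q(H,X)$ is a Gibbs measure, hence a Markov random field, so the leaf $v$ is automatically conditionally independent of the rest given its neighbor $w$ in the maximizer, and deleting $v$ changes $m(H,X)$ by $\mathbb{H}(X_v\mid X_w)$ (hence changes $d^*$ by the mutual information $d^*(P_1,X)$). You instead prove the identity $m(H,X)=m(H',X)+\mathbb{H}(X_v\mid X_w)$ by a two-sided bound: the upper bound from the chain rule and the fact that conditioning reduces entropy (valid for \emph{every} element of $Q(H,X)$, with the restriction to $V(H')$ landing in $Q(H',X)$), and the lower bound from an explicit conditionally independent extension of the $H'$-maximizer. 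This buys you two things: you avoid the Gibbs/MRF machinery entirely, and you sidestep a point the paper's sketch leaves implicit, namely that the restriction of the $H$-maximizer to $V(H')$ need not a priori be the $H'$-maximizer — your two inequalities handle this cleanly. One small remark: your opening mention of reducing to stars is superfluous, since the pendant-edge induction already covers all trees; also, for the conditional extension to have the correct marginal on the new edge you need the law of $X_w$ under the $H'$-distribution to equal the corresponding marginal of $X$, which holds because $w$ retains an edge in $H'$ whenever $H$ has at least two edges — worth stating explicitly, as you essentially do.
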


\begin{proof} We have by proposition \ref{prop-emax} that the entropy maximizing distribution in $Q(H,X)$ is a Gibbs measure and so it is a Markov random field. This implies that the distribution of every vertex $v$ of degree $1$ is conditionally independent from the remaining vertices with respect to its neighbor. This means that by deleting $v$ the change in $m(H,X)$ is the mutual information $I(X_1;X_2)$. This proves the lemma by induction on the number of edges in $H$. 
\end{proof}

\begin{lemma}\label{lem-emax1} Let $X=(X_1,X_2,\dots,X_k)$ be an arbitrary finite joint distribution and $H\in\mathcal{M}^k$. Then $1\leq h^*(H,X)\leq\prod_{i=1}^k|V_i(H)|$. If $k=2$ then we have the stronger lower bound $$\max(|V_1(H)|,|V_2(H)|)\leq h^*(H,X).$$
\end{lemma}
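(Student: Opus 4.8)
The plan is to prove the two lower bounds from elementary properties of entropy applied to the entropy‑maximizing coupling of Proposition~\ref{prop-emax}, and to prove the upper bound by constructing an explicit high‑entropy coupling for the complete multipartite host and then invoking monotonicity. Throughout we may assume $d^*(E_k,X)>0$: when $X$ is independent all the quantities in question collapse to $|E(H)|$ by definition, and $1\le|E(H)|\le\prod_i|V_i(H)|$ (and, for $k=2$, also $|E(H)|\ge\max(|V_1(H)|,|V_2(H)|)$) simply because $H$ is a $k$‑partite $k$‑uniform hypergraph with no isolated vertices.

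For $h^*(H,X)\ge 1$, let $\mu$ be the maximizer in $Q(H,X)$, so $m(H,X)=\mathbb{H}(\mu)$, and fix an edge $e$ of $H$. Since $e$ meets each $V_i(H)$ in exactly one vertex and the $\mu$‑marginal on $e$ is $X$, subadditivity of entropy gives $\mathbb{H}(\mu)\le\mathbb{H}(X)+\sum_{i=1}^k(|V_i(H)|-1)\mathbb{H}(X_i)$; substituting into $d^*(H,X)=\sum_i|V_i(H)|\mathbb{H}(X_i)-m(H,X)$ yields $d^*(H,X)\ge\sum_i\mathbb{H}(X_i)-\mathbb{H}(X)=d^*(E_k,X)$. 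For the sharper bound when $k=2$ it suffices, by symmetry in the two classes, to show $h^*(H,X)\ge|V_1(H)|$. By the chain rule, $\mathbb{H}(\mu)=\mathbb{H}(\{X_w\}_{w\in V_2})+\mathbb{H}(\{X_v\}_{v\in V_1}\mid\{X_w\}_{w\in V_2})\le|V_2(H)|\mathbb{H}(X_2)+\sum_{v\in V_1}\mathbb{H}(X_v\mid\{X_w\}_{w\in V_2})$; each $v\in V_1(H)$ has a neighbour $w(v)$, and since $(X_v,X_{w(v)})$ has the law of $(X_1,X_2)$ and conditioning only decreases entropy, $\mathbb{H}(X_v\mid\{X_w\}_{w\in V_2})\le\mathbb{H}(X_v\mid X_{w(v)})=\mathbb{H}(X)-\mathbb{H}(X_2)$. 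Hence $m(H,X)\le|V_2(H)|\mathbb{H}(X_2)+|V_1(H)|(\mathbb{H}(X)-\mathbb{H}(X_2))$, and after substitution and simplification this gives $d^*(H,X)\ge|V_1(H)|\,d^*(E_2,X)$.

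For the upper bound, monotonicity (Lemma~\ref{lem-emax5}) lets us replace $H$ by the complete $k$‑partite hypergraph $K=K_{n_1,\dots,n_k}$ with $n_i=|V_i(H)|$, so it suffices to show $d^*(K,X)\le(\prod_i n_i)\,d^*(E_k,X)$. The engine is a gluing estimate: if $H_1,H_2\in\mathcal M^k$ meet in a common vertex set $W$ on which they induce the same sub‑hypergraph, and the maximizers $\mu_1,\mu_2$ of $Q(H_1,X),Q(H_2,X)$ have the same marginal on $W$ (automatic in our applications because the pieces are isomorphic by a map fixing $W$ pointwise), then gluing $\mu_1$ and $\mu_2$ conditionally independently over $W$ produces an element of $Q(H_1\cup H_2,X)$ of entropy $\mathbb{H}(\mu_1)+\mathbb{H}(\mu_2)-\mathbb{H}(X_W)\ge m(H_1,X)+m(H_2,X)-\sum_i|W\cap V_i|\mathbb{H}(X_i)$, the last step again by subadditivity. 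For $k=2$, decompose $K_{a,b}$ ($a=n_1,b=n_2$) into the $b$ stars $S_1,\dots,S_b$, each a copy of the tree $K_{a,1}$ containing all of $V_1$; consecutive stars meet exactly in $V_1$, so iterating the estimate yields $m(K_{a,b},X)\ge b\,m(K_{a,1},X)-(b-1)a\,\mathbb{H}(X_1)$. Lemma~\ref{lem-emax4} gives $m(K_{a,1},X)=a\mathbb{H}(X_1)+\mathbb{H}(X_2)-a\,d^*(E_2,X)$, and substituting collapses the right side to $a\mathbb{H}(X_1)+b\mathbb{H}(X_2)-ab\,d^*(E_2,X)$, i.e. $d^*(K_{a,b},X)\le ab\,d^*(E_2,X)$. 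For general $k$ one runs the analogous induction on $k$: write $K_{n_1,\dots,n_k}$ as the union over the $n_k$ vertices of $V_k$ of copies of $K_{n_1,\dots,n_{k-1},1}$ — each a complete $(k-1)$‑partite hypergraph with one extra universal vertex — glue them over the shared $K_{n_1,\dots,n_{k-1}}$, and compute $m(K_{n_1,\dots,n_{k-1},1},X)$ by conditioning on the value of the universal vertex and applying the inductive bound to the conditional distributions $X_{<k}\mid X_k=a$.

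The lower bounds are essentially immediate; all the work is in the upper bound. The main obstacles there are (i) verifying that the conditionally‑independent gluing genuinely lands in $Q(H_1\cup H_2,X)$ — which is exactly why the two pieces must induce the same structure on $W$ and have matching $W$‑marginals — and (ii) carrying the arithmetic through the induction on $k$ so that the constant accumulated by the repeated gluings is exactly $\prod_i n_i$. This mirrors $|E(K_{n_1,\dots,n_k})|=\prod_i n_i$ and the classical fact that Sidorenko's inequality holds whenever the host graph is complete multipartite; both points are routine, but the bookkeeping in (ii) is the fiddly part.
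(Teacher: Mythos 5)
Your proof is correct in substance and follows essentially the same route as the paper's: the lower bounds come from standard entropy inequalities applied to the entropy maximizer, and the upper bound is obtained by first reducing to the complete host via Lemma~\ref{lem-emax5} and then exhibiting a high-entropy element of $Q(K,X)$ built from conditionally independent couplings, with the entropy lost in the coupling controlled by subadditivity. The differences are mostly cosmetic but worth recording. For the $k=2$ lower bound the paper picks one edge per vertex of the larger class to get a spanning forest $H'$, applies Lemma~\ref{lem-emax4} to get $h^*(H',X)=\max(|V_1(H)|,|V_2(H)|)$ and then invokes monotonicity; your chain-rule computation is that same inequality unpacked, and is fine. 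For the upper bound the paper never does pairwise gluing: it blows up one vertex class at a time, replacing a class of size $1$ by one of size $r$ via a single $r$-fold conditionally independent coupling of the current maximizer $\theta$ over its marginal on $\cup_{j\neq i}V_j$; this handles all $k$ uniformly and avoids your separate induction on $k$ with conditioning on the universal vertex (that detour can be made to work, but it is extra bookkeeping). The one genuine, though easily repaired, slip is in your iteration of the gluing estimate: you justify the matching of $W$-marginals by saying the two pieces are isomorphic by a map fixing $W$ pointwise, but when you glue $S_1\cup\dots\cup S_{j-1}$ to $S_j$ the two pieces are not isomorphic, so the maximizer of $Q(S_1\cup\dots\cup S_{j-1},X)$ need not have the same $V_1$-marginal as the star maximizer, and the inequality $m(S_1\cup\dots\cup S_j,X)\ge m(S_1\cup\dots\cup S_{j-1},X)+m(S_j,X)-a\,\mathbb{H}(X_1)$ is not established as stated. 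The fix is immediate: glue the explicitly constructed measures (whose $V_1$-marginal equals that of the star maximizer at every step of the iteration) rather than the successive maximizers, or equivalently take the $b$-fold conditionally independent coupling of the star maximizer in one shot --- which is exactly the paper's construction.
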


\begin{proof} We start with the upper bound. By lemma \ref{lem-emax5} it is enough to prove the upper bound for the complete $k$-partite $k$-uniform hypergraph $K$ on the vertex set $\cup_{i=1}^k V_i$.  Observe that the upper bound is equivalent with 
\begin{equation}\label{eq-emax2}
\mathbb{H}(\theta)\geq p\mathbb{H}(X)-\sum_{i=1}^k (p-|V_i|)\mathbb{H}(X_i)
\end{equation}
 where $p=\prod_{i=1}^k |V_i|$ and
$\theta$ is the entropy maximizing distribution in $Q(K,X)$. We go by induction on the number of indices $i$ for which $|V_i|\geq 1$. If $|V_i|=1$ holds for every $1\leq i\leq k$ then the statement is trivial since $h^*(K,\nu)=1$ holds in this case. Assume that the statement holds for some complete $K$ with $|V_i|=1$ for some index $i$. Now we add $r-1$ new vertices to $V_i$ in $K$ and we denote by $K'$ the complete $k$-partite $k$-uniform hypergraph on this vertex set.
Our goal is to construct a probability measure $\theta'$ in $Q(K',X)$ that has high enough entropy to prove the necessary lower bound for the entropy maximizer. Let $\theta'$ denote $r$ fold conditionally independent coupling of $\theta$ with respect to the marginal on $\cup_{j\neq i} V_j$.  It is clear that $\theta'\in Q(K',X)$. Furthermore, following the method in \cite{Sz}, we have that $$\mathbb{H}(\theta')\geq r\mathbb{H}(\theta)-(r-1)\sum_{j\neq i}\mathbb{H}(X_j)|V_j|.$$ Usin (\ref{eq-emax2}) for $\mathbb{H}(\theta)$ in the above inequality we obtain the corresponding version (\ref{eq-emax2}) for $K'$ and thus the induction is complete.  

To prove the lower bound for general $k$ observe that since $H$ has at least one edge and mutual information of random variables is decreasing when taking subsets of variables we get by restricting the entropy maximizing distribution to a single edge that $d^*(H,X)\geq d^*(E_k,X)$.

For the case $k=2$ assume without loss of generality that $|V_1(H)|\geq |V_2(H)|$. Since $H$ has no isolated point there is an edge $e_v$ for every $v\in V_1(H)$. Let $H'$ be the graph whose edge set is $\{e_v|v\in V_1(H)\}$. It is clear that $H'$ is a tree with $|V_1|$ edges. We have by lemma \ref{lem-emax4} that $h^*(H',X)=|V_1|$. Since $h^*(H',X)\leq h^*(H,X)$ the proof is complete.

\end{proof}

\begin{lemma}\label{lem-emax2} Assume that $H\in\mathcal{M}_k$ is the disjoint union of $H_1,H_2\in\mathcal{M}_k$. Let $X=(X_1,X_2,\dots,X_k)$ be a finite joint distribution. Then $m(H,X)=m(H_1,X)+m(H_2,X), d^*(H,X)=d^*(H_1,X)+d^*(H_2,X)$ and $h^*(H,X)=h^*(H_1,X)+h^*(H_2,X)$.
\end{lemma}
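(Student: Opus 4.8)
The plan is to reduce all three identities to the single entropy statement $m(H,X)=m(H_1,X)+m(H_2,X)$, since the other two are then pure bookkeeping. A disjoint union satisfies $|V_i(H)|=|V_i(H_1)|+|V_i(H_2)|$ for each $i$ and $|E(H)|=|E(H_1)|+|E(H_2)|$; substituting the entropy identity into the definition $d^*(H,X)=-m(H,X)+\sum_{i=1}^k\mathbb{H}(X_i)|V_i(H)|$ immediately yields $d^*(H,X)=d^*(H_1,X)+d^*(H_2,X)$, and dividing by $d^*(E_k,X)$ gives the additivity of $h^*$. In the degenerate case where $X$ is an independent system all of the $d^*$ quantities vanish, and one instead invokes $|E(H)|=|E(H_1)|+|E(H_2)|$ together with the convention for $h^*$ in Definition \ref{infdens}.

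For the entropy identity I would prove two inequalities. For $m(H,X)\le m(H_1,X)+m(H_2,X)$: given any $\mu\in Q(H,X)$, let $\mu_1,\mu_2$ be the marginals of $\mu$ on the coordinate sets $V(H_1)$ and $V(H_2)$. Since every edge of $H_1$ is an edge of $H$, restricting $\mu_1$ further to such an edge produces the same measure as restricting $\mu$ directly, namely $X$; hence $\mu_1\in Q(H_1,X)$ and similarly $\mu_2\in Q(H_2,X)$. By subadditivity of Shannon entropy, $\mathbb{H}(\mu)\le\mathbb{H}(\mu_1)+\mathbb{H}(\mu_2)\le m(H_1,X)+m(H_2,X)$, and taking the supremum over $\mu$ gives the bound. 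For the reverse inequality, let $\theta_1\in Q(H_1,X)$ and $\theta_2\in Q(H_2,X)$ be the entropy maximizers (which exist and are unique by Proposition \ref{prop-emax}) and set $\theta:=\theta_1\otimes\theta_2$ on $\prod_{v\in V(H)}F_v$. Because $V(H_1)$ and $V(H_2)$ are disjoint, every edge of $H$ lies entirely in one part, so the marginal of $\theta$ on any edge agrees with the corresponding edge-marginal of $\theta_1$ or $\theta_2$, which is $X$; thus $\theta\in Q(H,X)$. Additivity of entropy over independent coordinates gives $\mathbb{H}(\theta)=\mathbb{H}(\theta_1)+\mathbb{H}(\theta_2)=m(H_1,X)+m(H_2,X)$, so $m(H,X)\ge m(H_1,X)+m(H_2,X)$.

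The only genuinely substantive point is checking that marginalization behaves correctly — that the marginal of $\mu$ on $V(H_1)$ really lands in $Q(H_1,X)$, and that $\theta_1\otimes\theta_2$ really lands in $Q(H,X)$ — and this is precisely where the \emph{disjointness} of the union is used; everything else is the subadditivity/additivity of entropy plus the arithmetic of vertex and edge counts. I do not expect any real obstacle here: the argument is the entropy-maximization counterpart of the proof that $t(H_1\sqcup H_2,G)=t(H_1,G)\,t(H_2,G)$ for homomorphism densities, transported through the logarithm and the $d$--$d^*$ dictionary.
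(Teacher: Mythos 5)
Your proposal is correct and follows essentially the same route as the paper: the paper's proof observes that elements of $Q(H,X)$ are exactly the couplings of $Q(H_1,X)$ and $Q(H_2,X)$, so the entropy maximizer is the independent coupling of the two maximizers, which is precisely the two-inequality argument (subadditivity for the upper bound, product measure for the lower bound) that you spell out. The derivation of the $d^*$ and $h^*$ identities from the additivity of $m$ and of the vertex and edge counts, including the degenerate independent case, matches the paper's ``direct consequences'' remark.
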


\begin{proof} It is clear that the elements of $Q(H,X)$ are all possible couplings of $Q(H_1,X)$ and $Q(H_2,X)$. Thus the entropy maximizer in $Q(H,X)$ is the independent coupling of the entropy maximizers in $Q(H_1,X)$ and $Q(H_2,X)$. This proves the first claim. The remaining two equations are direct consequences of the first one.
\end{proof}

\begin{lemma}\label{lem-emax3} Let $X=(X_1,X_2,\dots,X_k)$ and $Y=(Y_1,Y_2,\dots,Y_k)$ be finite joint distributions and let $X\times Y$ denote the independent coupling $((X_1,Y_1),(X_2,Y_2),\dots,(X_k,Y_k))$. Then for every $H\in\mathcal{M}_k$ we have that $d^*(H,X\times Y)=d^*(H,X)+d^*(H,Y)$.
\end{lemma}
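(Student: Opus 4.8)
The statement to prove is that $d^*(H, X\times Y) = d^*(H,X) + d^*(H,Y)$ for every $H\in\mathcal{M}_k$, where $X\times Y$ is the coordinate-wise independent coupling. The natural approach is to unwind the definition $d^*(H,X) = -m(H,X) + \sum_{i=1}^k \mathbb{H}(X_i)|V_i(H)|$ and reduce everything to a statement about the entropy maximizers. Since $\mathbb{H}$ is additive over independent couplings, $\mathbb{H}((X_i,Y_i)) = \mathbb{H}(X_i)+\mathbb{H}(Y_i)$, so the marginal terms $\sum_i \mathbb{H}((X_i,Y_i))|V_i(H)|$ split cleanly as $\sum_i \mathbb{H}(X_i)|V_i(H)| + \sum_i \mathbb{H}(Y_i)|V_i(H)|$. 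Therefore the whole statement reduces to proving $m(H, X\times Y) = m(H,X) + m(H,Y)$.

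**Key steps.** First I would establish the inequality $m(H,X\times Y) \geq m(H,X) + m(H,Y)$ by exhibiting an explicit competitor in $Q(H, X\times Y)$: namely, if $\mu$ is the entropy maximizer in $Q(H,X)$ and $\nu$ the entropy maximizer in $Q(H,Y)$, form the independent coupling $\mu\otimes\nu$ on $\big(\prod_{v\in V(H)}F_v\big)\times\big(\prod_{v\in V(H)}F_v\big)$, which we reorganize as a measure on $\prod_{v\in V(H)}(F_v\times F'_v)$. One checks directly that its marginal on each edge $S\in E(H)$ is $\mu_S\otimes\nu_S = (X\times Y)$-distributed, so $\mu\otimes\nu\in Q(H,X\times Y)$, and $\mathbb{H}(\mu\otimes\nu) = \mathbb{H}(\mu)+\mathbb{H}(\nu) = m(H,X)+m(H,Y)$. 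For the reverse inequality, I would use subadditivity of entropy under taking marginals: given any $\rho\in Q(H, X\times Y)$, let $\rho^{(1)}$ be its pushforward onto the first coordinates $\prod_v F_v$ and $\rho^{(2)}$ onto the second coordinates $\prod_v F'_v$. Each edge-marginal of $\rho^{(1)}$ is the $X$-law (it is the first-coordinate marginal of $(X\times Y)$), so $\rho^{(1)}\in Q(H,X)$ and similarly $\rho^{(2)}\in Q(H,Y)$; hence $\mathbb{H}(\rho^{(1)})\le m(H,X)$ and $\mathbb{H}(\rho^{(2)})\le m(H,Y)$. The subadditivity bound $\mathbb{H}(\rho)\le \mathbb{H}(\rho^{(1)}) + \mathbb{H}(\rho^{(2)})$ then gives $\mathbb{H}(\rho)\le m(H,X)+m(H,Y)$; taking the supremum over $\rho$ yields $m(H,X\times Y)\le m(H,X)+m(H,Y)$. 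Combining the two inequalities gives the claimed equality for $m$, and then the identity for $d^*$ follows by the marginal-term bookkeeping above.

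**Main obstacle.** The argument is essentially routine once the right two competitors are identified; the only point requiring a little care is the bookkeeping of which finite sets and which projections are in play — the identification of $\prod_v(F_v\times F'_v)$ with $\big(\prod_v F_v\big)\times\big(\prod_v F'_v\big)$ and the verification that the edge-marginals behave as claimed under this identification. I would also want to double-check that $X\times Y$ is still a finite distribution (it is, as a product of finite distributions), so that $d^*(H,X\times Y)$ is well-defined in the sense of Definition \ref{infdens}. If desired, the analogous multiplicativity $t^*(H,X\times Y) = t^*(H,X)\,t^*(H,Y)$ is then immediate by exponentiating, which parallels the right-multiplicativity property of $t(H,G)$ recorded earlier and confirms that $d^*$ plays the role of $d$.
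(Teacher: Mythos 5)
Your proposal is correct and follows essentially the same route as the paper: both reduce the claim to $m(H,X\times Y)=m(H,X)+m(H,Y)$, obtain the lower bound from the product of the two entropy maximizers, and obtain the upper bound by projecting an arbitrary element of $Q(H,X\times Y)$ onto the two coordinate blocks and invoking subadditivity of entropy. Your write-up is merely a bit more explicit about the two directions and the marginal bookkeeping than the paper's compressed version.
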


\begin{proof} Assume that $X_i$ is $F_i$-valued and $Y_i$ is $L_i$-valued for $1\leq i\leq k$. Let $P_X=\prod_{i=1}^k F_i^{V_i(H)}$ and $P_Y=\prod_{i=1}^k L_i^{V_i(H)}$. Let $\nu_X$ (resp. $\nu_Y$) denote the probability measure on $P_X$ (resp. $P_Y$) representing $X$ (resp. $Y$). We have that $X\times Y$ is represented by $\nu_X\times\nu_Y$ on $P_X\times P_Y$.  If $\mu\in Q(H,X\times Y)$ then let $\mu_X$ denote the marginal of $\mu$ on $P_X$ and let $\mu_Y$ denote marginal of $\mu$ on $P_Y$. We have that $\mu_X\times\mu_Y\in Q(H,X\times Y)$ and that $\mathbb{H}(\mu_X\times\mu_Y)\geq\mathbb{H}(\mu)$. It follows that the entropy maximizer in $Q(H,X\times Y)$ is the product of the entropy maximizers in $Q(H,X)$ and $Q(H,Y)$.
\end{proof}

A novelty of definition \ref{infdens} is that it gives a natural definition for sugbraph densities in joint distributions of random variables. We believe that the quantities $m(H,X),d^*(H,X)$ and $h^*(H,X)$ are useful information theoretic invariants of joint distributions. The relationship between the quantities $h^*(H,X)$ and $h(H,G)$ is explained by the next theorem. 

\begin{theorem}\label{main} For every finite joint distribution $X=(X_1,X_2)$ there is a log-convergent graph sequence $\{G_i\}_{i=1}^\infty$ such that $G_i$ is edge-vertex transitive for every $i\in\mathbb{N}$ and $$\lim_{i\to\infty}h(H,G_i)=h^*(H,X)$$ holds for every $H\in\mathcal{B}_0$.
\end{theorem}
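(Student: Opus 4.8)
The plan is to reduce the theorem, via a diagonal argument, to the finitary statement that for every $\epsilon>0$ and every finite family $\mathcal{H}\subseteq\mathcal{B}_0$ there is an edge-vertex transitive graph $G\in\mathcal{B}_0$ with $|h(H,G)-h^*(H,X)|<\epsilon$ for all $H\in\mathcal{H}$; iterating this with $\epsilon\to0$ along an exhaustion of $\mathcal{B}_0$ produces a sequence $\{G_i\}_{i=1}^\infty$ which is automatically log-convergent by Lemma \ref{comp}. To prepare the ground, observe that if $X$ is an independent coupling then $h^*(H,X)=|E(H)|$ and every complete bipartite graph works; otherwise $d^*(P_1,X)=I(X_1;X_2)>0$, and since the entropy-maximisation problem of Proposition \ref{prop-emax} depends continuously on its system of marginal constraints, the map $X\mapsto h^*(H,X)$ is continuous at $X$. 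Hence we may replace $X$ by a nearby distribution with rational probabilities.

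The first ingredient is a one-sided estimate valid for \emph{every} edge-vertex transitive $G'\in\mathcal{B}_0$:
\begin{equation}\label{sketch-easy}
h(H,G')\ \ge\ h^*(H,X_{G'}),
\end{equation}
where $X_{G'}$ is a uniformly random edge of $G'$. Its proof is short. Since $G'$ is biregular, $\mathbb{H}\big((X_{G'})_i\big)=\log|V_i(G')|$; a uniformly random homomorphism $\phi:H\to G'$ restricts on each edge of $H$ to a distribution that is $\mbox{Aut}(G')$-invariant and concentrated on a single $\mbox{Aut}(G')$-orbit, hence (by edge-transitivity) uniform on $E(G')$. Therefore the law of $\big(\phi(v)\big)_{v\in V(H)}$ lies in $Q(H,X_{G'})$ and $\log|\Hom(H,G')|=\mathbb{H}\big((\phi(v))_v\big)\le m(H,X_{G'})$; unwinding the definitions of $d$ and $d^*$ turns this into $d(H,G')\ge d^*(H,X_{G'})$, which is an equality when $H=P_1$, and dividing gives (\ref{sketch-easy}).

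The core of the argument is to build, for our rational $X$, edge-vertex transitive graphs $G_n=\mathcal{G}(G^{(n)},T_1^{(n)},T_2^{(n)})$ for which (\ref{sketch-easy}) becomes an asymptotic equality. I would arrange their edge distributions $X_{G_n}$ to be close in total variation to the product $X^{\otimes n}=\big((X_1)^{\otimes n},(X_2)^{\otimes n}\big)$: the jointly typical set of $X^{\otimes n}$ is nearly uniform with nearly uniform marginals, so it is close to the edge distribution of a biregular bipartite graph, and $h^*(H,X^{\otimes n})=h^*(H,X)$ by Lemma \ref{lem-emax3}; combined with the continuity above this forces $h^*(H,X_{G_n})\to h^*(H,X)$, so that (\ref{sketch-easy}) already yields $\liminf_n h(H,G_n)\ge h^*(H,X)$. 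The reverse bound $\limsup_n h(H,G_n)\le h^*(H,X)$ is what requires real work: $G_n$ must contain \emph{as many} homomorphic copies of each $H$ as the entropy-maximising coupling in $Q(H,X_{G_n})$ allows, i.e. the uniformly random homomorphism $H\to G_n$ — which, in the coset description of edge-vertex transitive graphs from Lemma \ref{lem-bas1}, is a uniformly random system $(s_vT_i^{(n)})_{v\in V(H)}$ with $s_vT_1^{(n)}\cap s_wT_2^{(n)}\neq\emptyset$ along every edge $(v,w)$ of $H$ — must approach in entropy the maximiser of Proposition \ref{prop-emax}. The plan is to take $G^{(n)}$ to act "pseudorandomly" so that $\mathcal{G}(G^{(n)},T_1^{(n)},T_2^{(n)})$ is a quasirandom biregular bipartite graph with the prescribed local statistics, and to estimate the homomorphism counts through the identity
$$t(H,\mathcal{G}(G,T_1,T_2))=|W(H,G,T_1,T_2)|\,|T_1|^{|V_1(H)|}|T_2|^{|V_2(H)|}|T_1\cap T_2|^{-|E(H)|}|G|^{-|V(H)|},$$
matching the exponential rate of the number $|W(H,G^{(n)},T_1^{(n)},T_2^{(n)})|$ of solutions in $G^{(n)}$ of the system $g_eg_f^{-1}\in T_i$ with $m(H,X_{G_n})$ up to an error $o\big(d(P_1,G_n)\big)$; the Gibbs/Markov structure of the maximiser (compare the proof of Lemma \ref{lem-emax4}) is what lets one identify this rate.

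I expect this construction and the attendant solution count to be the main obstacle, for two reasons: a single $G_n$ must realise the entropy maximum of $Q(H,-)$ for \emph{all} $H$ simultaneously, whereas the maximiser of Proposition \ref{prop-emax} is a Gibbs measure whose potentials depend on $H$, so genuine pseudorandomness of the group action is needed; and one must keep the edge distribution of $G_n$ close to $X^{\otimes n}$ while enforcing edge-vertex transitivity. Granting the construction, the theorem follows by squeezing: for large $n$, $h(H,G_n)$ lies between $h^*(H,X_{G_n})$ and $h^*(H,X_{G_n})+o(1)$, and $h^*(H,X_{G_n})\to h^*(H,X)$, which establishes the finitary statement above and hence the required log-convergent sequence of edge-vertex transitive graphs.
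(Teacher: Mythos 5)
Your framework (reduction to rational $X$ by continuity, diagonalization over finite families of test graphs, and the inequality $h(H,G')\ge h^*(H,X_{G'})$ for edge-vertex transitive $G'$, which together with the general bound $h^*(H,G')\ge h(H,G')$ of lemma \ref{lem-inflim1} gives equality) is correct and matches the paper's supporting lemmas. But the proof has a genuine gap exactly where you flag ``the main obstacle'': the construction of the graphs $G_n$ is never carried out, and the route you sketch for it --- pseudorandom group actions, coset graphs $\mathcal{G}(G^{(n)},T_1^{(n)},T_2^{(n)})$, and counting solutions of $g_eg_f^{-1}\in T_i$ --- is not the paper's and is substantially harder than what is needed. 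Moreover your proposed squeeze is somewhat circular: since $h(H,G_n)=h^*(H,X_{G_n})$ holds exactly for any edge-vertex transitive $G_n$, the entire content of the theorem is the single claim $h^*(H,X_{G_n})\to h^*(H,X)$, and this cannot be extracted from continuity of $h^*$ in the marginal constraints, because that continuity is for a fixed finite alphabet while the alphabet of $X_{G_n}$ grows with $n$; a total-variation error $\epsilon$ on an alphabet of size $e^{cn}$ can shift entropies by order $\epsilon n$, i.e.\ by the same order as the main term.

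The missing idea in the paper is the method of types. Assume $\nu$ (the law of $X$ on $F_1\times F_2$) is rational and set $V_{i,n}:=\{v\in F_i^{n!}:{\rm distr}(v)=\nu_i\}$ and $E_n:=\{e\in V_{1,n}\times V_{2,n}\subseteq (F_1\times F_2)^{n!}:{\rm distr}(e)=\nu\}$, where ${\rm distr}$ is the empirical distribution of the coordinates. The symmetric group $S_{n!}$ permutes coordinates and acts transitively on each of $V_{1,n},V_{2,n},E_n$ (each is a single type class), so $G_n=(V_{1,n},V_{2,n},E_n)$ is edge-vertex transitive with no extra work --- no pseudorandomness hypothesis is needed. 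A homomorphism $f:H\to G_n$ is a point of $\Hom(H,K)^{n!}$ ($K$ the complete bipartite graph on $F_1,F_2$), and the $S_{n!}$-orbits on $\Hom(H,G_n)$ are indexed by empirical distributions $\mu$ on $F_1^{V_1(H)}\times F_2^{V_2(H)}$ whose edge marginals equal $\nu$, i.e.\ by the points of $Q(H,X)$ with denominators dividing $n!$. Each orbit has size $e^{n!(\mathbb{H}(\mu)+o(1))}$ and there are only polynomially many (in $n!$) orbits, so $\log|\Hom(H,G_n)|/n!\le m(H,X)+o(1)$; conversely, because $Q(H,X)$ is a polytope cut out by rational equations, its rational points are dense, so the entropy maximizer can be approximated by an integral type realized as an actual orbit, giving the matching lower bound. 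Subtracting $\log|V_{i,n}|/n!=\mathbb{H}(\nu_i)+o(1)$ yields $h(H,G_n)\to h^*(H,X)$ for every $H$ simultaneously --- which also answers your worry about one graph realizing the entropy maximum for all $H$ at once: it does so automatically because all of $Q(H,X)$ is realized by orbits, not just the maximizer.
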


\begin{proof} We assume that $X_1$ is a probability distribution on $F_1$ and $X_2$ is a probability distribution on $F_2$. Thus $X$ is represented by a probability measure  $\nu$ on $F_1\times F_2$. We denote the distributions of $X_1$ and $X_2$ by $\nu_1$ and $\nu_2$.  Note first that $Q(H,X)$ depends continuously on $\nu$ and thus $m(H,X)$ and $h^*(H,X)$ are also continuous in $\nu$. Consequently it is enough to prove the statement for the case where all probabilities in $\nu$ are rational numbers. This implies in particular that both marginals are given by rational probabilities. 

In this proof we will use the convention that if $e$ is an element in some product set $F^n$ then we denote by ${\rm distr}(e)$ the probability distribution on $F$ obtained by choosing a uniformly random coordinate of $e$. It is clear that exactly those probability distributions can be produced this way for a fix $n$ where each probability is of the form $a/n$ for some integer $a$. The symmetric group $S_n$ acts on $F^n$ by permuting the coordinates. It is clear that $e_1,e_2\in F^n$ are in the same orbit of $S_n$ if and only if ${\rm distr}(e_1)={\rm distr}(e_2)$.

We denote by $V_{1,n}$ (resp. $V_{2,n}$) the subset of elements $v$ in $F_1^{n!}$ (resp. in $F_2^{n!}$) in which ${\rm distr}(v)=\nu_1$ (resp. ${\rm distr}(v)=\nu_2$). If $n$ is big enough then $V_{1,n}$ and $V_{2,n}$ are non empty using the rationality of the probabilities. Viewing $V_{1,n}\times V_{2,n}$ as a subset in $(F_1\times F_2)^{n!}$ we denote by $E_n$ the set of elements $e$ in $V_{1,n}\times V_{2,n}$ that satisfy ${\rm distr}(e)=\nu$. Again if $n$ is big enough then $E_n$ is non empty. The triple $G_n:=(V_{1,n},V_{2,n},E_n)$ is a bipartite graph such that the symmetric group $S_{n!}$ acts on it by permuting the coordinates. Since $E_n$ is given by a fix distribution it follows that $S_{n!}$ acts transitively on $E_n$ and thus $G_n$ is edge transitive. Note that $G_n$ is embedded into $K^{n!}$ as an $S_n$ invariant sub-graph where $K$ is the complete graph with $V_1(K)=F_1,V_2(K)=F_2,E(K)=F_1\times F_2$.

Let $H\in\mathcal{B}_0$ be some fixed graph. The group $S_{n!}$ acts on the homomorphism set $\Hom(H,G_n)$ by $(f^\pi)(x)=f(x)^\pi$ where $\pi\in S_n,x\in V(H)$ and $f\in\Hom(H,G_n)$. The fact that $S_n$ acts as automorphisms on $G_n$ guarantees that images of homomorphisms are homomorphisms.
The key idea of the proof is that the number of orbits of $S_n$ on $\Hom(H,G_n)$ is polynomial in $n!$ however the size of the largest orbit is exponential. Thus the size of the largest orbit dominates the logarithm of $|\Hom(H,G_n)|$ when normalized by $n!$. We need the next claim.

\noindent{\it~Claim:  ~ Let $a_n$ denote the size of the largest orbit of $S_{n!}$ on $\Hom(H,G_n)$. Then $\lim_{i\to\infty}\log(a_n)/n!=m(H,X)$}

Let $O$ be an orbit of $S_{n!}$ on $\Hom(H,G_n)$. Assume that $f\in O$ is some element. Since $G_n$ is embedded into $K^{n!}$ we have that $f\in\Hom(H,K^{n!})$ and thus $f$ can be represented as a sequence $\{f_i\}_{i=1}^{n!}$ where each $f_i$ is an element in $\Hom(H,K)$. Let $\mu={\rm distr}(f)$. We have that $O=\{g|g\in\Hom(H,K^{n!}),{\rm distr}(g)=\mu)$. It follows by basic properties of entropy that $|\log(|O|)/n!-\mathbb{H}(\mu)|=o(1)$ uniformly for every orbit $O$ if $n$ is large enough. Observe that $\mu$ is a probability distribution on $F_1^{V_1(H)}\times F_2^{V_2(H)}$ with the property that the marginal on every edge of $H$ is equal to $\nu$. This is clear from the fact that these marginals represent edges in $G_n$ because $f$ is a homomorphism. We obtain that $\log(|O|)/n!\leq m(H,X)+o(1)$. To finish the proof of the claim we need to find an orbit $O$ with $\log(|O|)=m(H,X)+o(1)$.
The idea is to discretize the probability distribution $\theta$ in $Q(H,X)$ that maximizes entropy. If we manage to find $\theta'$ in $Q(H,X)$ with the property that $d_{TV}(\theta,\theta')=o(1)$ for the total variation distance $d_{TV}$ and $\theta'(x)n!\in\mathbb{Z}$ for every elementary event $x$ then $\theta'$ represents an orbit of homomorphisms of $H$ into $G_n$ with the desired property. The set $Q(H,X)$ is a convex set defined by rational inequalities. It follows that extremal points of $Q(H,X)$ have rational coordinates and thus rational points are dense in $Q(H,X)$. We obtain that $\theta$ can be approximated arbitrarily well by rational probability distributions inside $Q(H,X)$. If $n$ is large enough then any such approximation $\theta'$ will have the integrality property $\theta'(x)n!\in\mathbb{Z}$. The proof of the claim is thus finished. 

Let $b_n$ denote the number of orbits of $S_{n!}$ on $\Hom(H,G_n)$. Each orbit is represented by a probability distribution on $F_1^{V_1(H)}\times F_2^{V_2(H)}$ with the property that elementary events have probabilitis of the form $r/n!$ for some integer $0\leq r\leq n!$. This means that $b_n\leq (n!+1)^t$ where $t=|F_1|^{V_1(H)}|F_2|^{V_2(H)}$. Now we have that $a_n\leq\Hom(H,G_n)\leq a_nb_n$ and thus
$$\log(a_n)/n!\leq \log(|\Hom(H,G_n)|)/n!\leq \log(a_n)/n!+\log(b_n)/n!.$$
We have by our estimate that $\log(b_n)/n!=o(1)$ and thus 
\begin{equation}\label{eq-emax1}
\log(|\Hom(H,G_n)|)/n!=m(H,X)+o(1).
\end{equation}

Observe that $\log(|V_{i,n}|)/n!=\mathbb{H}(\nu_i)+o(1)$ for $i=1,2$. Thus we have by (\ref{eq-emax1}) that
$$\log(t(H,G_n))/n!=m(H,X)-|V_1(H)|\mathbb{H}(\nu_1)-|V_2(H)|\mathbb{H}(\nu_2)+o(1)=d^*(H,X)+o(1).$$
Using the above equation we obtain that $h(H,G_n)=h^*(H,X)+o(1)$ finishing the proof.

\end{proof}

\section{An information theoretic limit concept}

The goal of this chapter is to introduce limit concepts for joint distributions of $k$ random variables where $k$ is fixed. In chapter \ref{emax} we have introduced various ways of testing a joint distribution $X=(X_1,X_2,\dots,X_k)$ by a finite $k$-partite $k$-unifrom hypergraph. These can be used to introduce limit concepts in information theory. The limit concept related to $d^*$ (or equivalently to $t^*$) is very similar to dense graph and hypergraph convergence. In this paper we are interested in convergence corresponding to the quantities $h^*$ and especially in the case $k=2$. 

\begin{definition} Let $\{X^i=(X_1^k,X_2^i,\dots,X_k^i)\}_{i=1}^\infty$ be a sequence of finite joint distributions. We say that $\{X^i\}_{i=1}^\infty$ is $h^*$-convergent (resp. $d^*$-convergent) if we have that $\lim_{i\to\infty} h^*(H,X^i)$ (resp. $\lim_{i\to\infty} d^*(H,X^i)$) exists for every $H\in\mathcal{M}^k$.
\end{definition}

Lemma \ref{lem-emax1} implies the convenient property of $h^*$ convergence that every sequence of joint distributions of $k$ random variables has a $h^*$-convergent subsequence. Similarly to the graph log-limit space $\mathcal{L}$ we denote by $\mathcal{L}^*_k$ the limit space of $k$-fold joint distributions in $\mathbb{R}^{\mathcal{M}_k}$. A function $f:\mathcal{M}_k\rightarrow\mathbb{R}$ is in $\mathcal{L}^*_k$ if and only if there is a sequence $\{X^i\}_{i=1}^\infty$ of $k$-fold joint distributions such that $f(H)=\lim_{i\to\infty}h^*(H,X^i)$ holds for every $H\in\mathcal{M}_k$.
It follows from lemma \ref{lem-emax2} following the same argument as in lemma \ref{convex} that $\mathcal{L}^*_k$ is a convex compact set. Similarly to definition \ref{ergodic} we say that $W\in\mathcal{L}^*$ is ergodic if it is an extreme point. If $k=2$ we use the short-hand notation $\mathcal{L}^*$ for $\mathcal{L}^*_2$.
An immediate corollary of theorem \ref{main} is that $\mathcal{L}^*$ is contained in $\mathcal{L}$.

\begin{definition} For a graph $G\in\mathcal{B}_0$ let $X_G=(X_1,X_2)$ denote the distribution of a uniform random edge in $G$ where $X_1\in V_1(G)$ and $X_2\in V_2(G)$ are the endpoints of the edge. By abusing the notation we introduce $d^*(H,G):=d^*(H,X_G),~t^*(H,G):=t(H,X_G)$ and $h^*(H,G):=h^*(H,X_G)$.
\end{definition}

\begin{lemma}\label{lem-inflim1} Let $G\in\mathcal{B}_0$. Then $h^*(H,G)\geq h(H,G)$ holds for every $H\in\mathcal{B}_0$. Furtehrmore if $G$ is edge-vertex transitive then $h^*(H,G)=h(H,G), d^*(H,G)=d(H,G)$ and $t^*(H,G)=t(H,G)$ holds for every $H\in\mathcal{B}_0$.
\end{lemma}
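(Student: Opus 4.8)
The plan is to compare the finite-vertex entropy-maximization problem defining $d^*(H,G)$ with the combinatorial count defining $d(H,G)$, and then to observe that these two coincide exactly when the group of symmetries of $G$ is large enough, i.e. when $G$ is edge-vertex transitive. The first step is to unwind the definitions. By definition $d^*(H,G)=d^*(H,X_G)$ where $X_G=(X_1,X_2)$ is the uniform random edge of $G$, so $\nu$ is the uniform measure on $E(G)\subseteq V_1(G)\times V_2(G)$ and $\nu_i$ is the degree-weighted distribution on $V_i(G)$. Then $Q(H,X_G)$ is the set of probability measures on $V_1(G)^{V_1(H)}\times V_2(G)^{V_2(H)}$ whose marginal on each edge $(v,w)\in E(H)$ equals $\nu$. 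The point distribution supported uniformly on $\Hom(H,G)$ — call it $\rho_H$ — lies in $Q(H,X_G)$ precisely when every edge-marginal of $\rho_H$ is the uniform measure on $E(G)$; but in general the edge-marginal of $\rho_H$ is the distribution of the image of a fixed edge of $H$ under a uniform random homomorphism, which need not be uniform on $E(G)$. So for general $G$ we only get an inequality, not equality.

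For the inequality $h^*(H,G)\ge h(H,G)$, I would argue as follows. We have $\mathbb H(\rho_H)=\log|\Hom(H,G)|$, and by construction
\[
\mathbb H(\rho_H)=m(H,X_{G}')-\text{(correction)}
\]
is not directly what we want; instead the cleaner route is to note $d(H,G)=-\log t(H,G)=|V(H)|\log|V(G)|-\log|\Hom(H,G)|$, and to compare this with $d^*(H,G)=-m(H,X_G)+\mathbb H(X_1)|V_1(H)|+\mathbb H(X_2)|V_2(H)|$. Taking the blow-up $G_m$ of $G$ (which leaves $h$ and $h^*$ unchanged by blow-up invariance, and makes $\nu_i$ closer to uniform the more one balances the blow-up) and letting $m\to\infty$, the combinatorial count $\log|\Hom(H,G_m)|/(\text{vertex count})$ tends to exactly a quantity of the entropy-maximization type — but now maximized only over the measures arising as limits of homomorphism-image distributions, which is a subset of $Q(H,X_G)$. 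Since $d^*$ uses the full (larger) feasible set $Q(H,X_G)$, the maximum entropy is at least as large, hence $d^*(H,G)\le$ the blow-up limit of $d(H,G)=d(H,G)$ after suitable normalization, giving $h^*(H,G)\ge h(H,G)$. The honest way to make this rigorous is to invoke Theorem \ref{main}: it produces edge-vertex transitive $G_i$ with $\lim h(H,G_i)=h^*(H,X_G)$, and then monotonicity-type arguments (the $G_i$ are "uniformized versions" of $G$, as the introduction says) give $h^*(H,X_G)=\lim h(H,G_i)\ge h(H,G)$. I expect the cleanest proof of the inequality to route through Theorem \ref{main} together with a direct check that $h(H,G)$ is dominated by $h(H,G_i)$ in the limit — or, more self-containedly, through the discretization argument inside the proof of Theorem \ref{main} applied to $X_G$ itself.

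For the second assertion, suppose $G$ is edge-vertex transitive, so $G=\mathcal G(G_0,T_1,T_2)$ for a finite group $G_0$ and subgroups $T_i$, by Lemma \ref{lem-bas1}. Here the key point is that the automorphism group acts transitively on $E(G)$, which forces the edge-marginal of $\rho_H$ to be $\Aut(G)$-invariant on $E(G)$, hence uniform — so $\rho_H\in Q(H,X_G)$. Conversely, I claim $\rho_H$ is the entropy maximizer in $Q(H,X_G)$: any $\mu\in Q(H,X_G)$ is supported on maps $V(H)\to V(G)$ sending each edge of $H$ into $E(G)$, i.e. on $\Hom(H,G)$, because a point in the support has all its edge-marginals equal to $\nu$ whose support is $E(G)$. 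Therefore $Q(H,X_G)$ consists precisely of probability measures supported on the finite set $\Hom(H,G)$, and among all measures on a finite set the uniform one maximizes entropy, so $m(H,X_G)=\mathbb H(\rho_H)=\log|\Hom(H,G)|$. Similarly $\mathbb H(X_i)=\log|V_i(G)|$ since $\nu_i$ is uniform on $V_i(G)$ by vertex-transitivity. Substituting, $d^*(H,G)=-\log|\Hom(H,G)|+|V_1(H)|\log|V_1(G)|+|V_2(H)|\log|V_2(G)|=-\log t(H,G)=d(H,G)$, whence $t^*(H,G)=t(H,G)$ and $h^*(H,G)=d^*(H,G)/d^*(P_1,G)=d(H,G)/d(P_1,G)=h(H,G)$.

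The main obstacle is the inequality direction $h^*(H,G)\ge h(H,G)$ for general $G$: there the feasible set $Q(H,X_G)$ genuinely strictly contains the "homomorphism-image" measures, and one must argue that enlarging the feasible set can only raise the max-entropy and that this translates, after the correct normalization by $d(P_1,G)$ versus $d^*(P_1,G)$, into the stated inequality on the ratios. I expect the smoothest fix is to cite Theorem \ref{main} applied to $X=X_G$ and then note that $h(H,G)\le\liminf_i h(H,G_i)$ — which itself needs a short monotonicity argument comparing $G$ to its uniformized sequence — rather than to reprove an entropy-comparison from scratch.
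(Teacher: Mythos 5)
Your equality case (edge-vertex transitive $G$) is correct and is essentially the paper's argument: vertex transitivity makes the marginals $X_1,X_2$ uniform so $\mathbb{H}(X_i)=\log|V_i(G)|$, edge transitivity makes the uniform measure on $\Hom(H,G)$ an element of $Q(H,X_G)$, and every element of $Q(H,X_G)$ is supported on $\Hom(H,G)$, so $m(H,X_G)=\log|\Hom(H,G)|$ and everything collapses to the combinatorial quantities.

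The first part, however, has a genuine gap, and it is exactly where you flag uncertainty. Routing through Theorem \ref{main} does not help: given that theorem, the statement ``$\lim_i h(H,G_i)\geq h(H,G)$ for the uniformized sequence'' is literally a restatement of ``$h^*(H,X_G)\geq h(H,G)$'', so the ``short monotonicity argument'' you defer to is the entire content of the claim. The blow-up sketch also does not close it, and the intermediate deduction ``$d^*(H,G)\leq d(H,G)$, hence $h^*(H,G)\geq h(H,G)$'' is unjustified: one also has $d^*(P_1,G)=I(X_1;X_2)\leq d(P_1,G)$, so both numerator and denominator shrink and the ratio comparison could a priori go either way. The paper's proof is a short direct computation that avoids all of this. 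Writing $h^*=h^*(H,X_G)$, the definition of $h^*$ rearranges to
$$m(H,X_G)=h^*\,\mathbb{H}(X_G)-\sum_{i=1}^2\bigl(h^*-|V_i(H)|\bigr)\mathbb{H}(X_i),$$
and one substitutes the three observations $\log|\Hom(H,G)|\geq m(H,X_G)$ (support argument, which you already have), $\mathbb{H}(X_G)=\log|E(G)|$, and $\mathbb{H}(X_i)\leq\log|V_i(G)|$. The crucial point that makes the last substitution legal is that each coefficient $h^*-|V_i(H)|$ is nonnegative, which is precisely the sharpened lower bound $h^*(H,X)\geq\max(|V_1(H)|,|V_2(H)|)$ of Lemma \ref{lem-emax1} (for $k=2$); without it the inequality could flip. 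The result is $\log|\Hom(H,G)|\geq h^*\log|E(G)|-\sum_i(h^*-|V_i(H)|)\log|V_i(G)|$, which rearranges to $d(H,G)\leq h^*\,d(P_1,G)$, i.e.\ $h(H,G)\leq h^*(H,G)$. You should replace the blow-up/Theorem-\ref{main} discussion with this computation; all the ingredients except the appeal to Lemma \ref{lem-emax1} are already present in your write-up of the equality case.
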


\begin{proof} We start with a few observations. It is clear that $\log(|V_i(G)|)\geq\mathbb{H}(X_i)$ for $i=1,2$ since $\log|V(G_i)|$ is the entropy of the uniform distribution on $V(G_i)$ and uniform distribution has the maximal entropy. Similarly $\log(|\Hom(H,G)|)\geq m(H,X_G)$ holds since every distribution in $Q(H,X_G)$ is concentrated on the homomorphism set $\Hom(H,G)$. Observe that we have by definition that $\mathbb{H}(X_G)=\log(|E(G)|)$. From the definition of $h^*(H,X_G)$ we have
$$m(H,X)=h^*(H,X_G)\mathbb{H}(X_G)-\sum_{i=1}^2(h^*(H,X_G)-|V_i(H)|)\mathbb{H}(X_i)$$
and thus by the previous observations and lemma \ref{lem-emax1} we obtain
$$\log(|\Hom(H,G)|)\geq h^*(H,X_G)\log(|E(G)|)-\sum_{i=1}^2(h^*(H,X_G)-|V_i(H)|)\log(|V_i(G)|)$$
which is equivalent with the first statement. 

To see the second statement we have to check that all the inequalities used above become equalities and that $d^*(P_1,G)=d(P_1,G)$.
The fact that $G$ is edge-vertex transitive implies that the automorphism group of $G$ acts transitively on both $V_1(G)$ and $V_2(G)$ and thus the marginals of $X_1$ and $X_2$ of $X$ are uniform. It follows that $\log(|V_i(G)|)=\mathbb{H}(X_i)$ for $i=1,2$. It follows that $d^*(P_1,G)=d(P_1,G)$. Edge-vertex transitivity implies that the uniform measure $\mu$ on $\Hom(H,G)$ has uniform marginals on the edges and thus $\mu\in Q(H,X_G)$. It follows that $\log(|\Hom(H,G)|)\leq m(H,X_G)$ and this together with the opposite inequality from above implies $\log(|\Hom(H,G)|)=m(H,X_G)$.
\end{proof}

From lemma \ref{lem-inflim1} and theorem \ref{main} we obtain the following group theoretic characterization of the information theoretic limit space $\mathcal{L}^*$.

\begin{theorem}\label{thm-inflim1} The closure of all edge-vertex transitive graphs with respect to log-convergence (represented in $\mathcal{L}$) is equal to $\mathcal{L}^*$.
\end{theorem}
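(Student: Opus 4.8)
The plan is to establish the equality by proving the two inclusions separately, drawing on Lemma~\ref{lem-inflim1} for one direction and Theorem~\ref{main} for the other. Write $S$ for the set of all vectors $\tau(G)$ with $G\in\mathcal{B}_0$ edge-vertex transitive, so that $S\subseteq\mathcal{L}\subseteq\mathbb{R}^{\mathcal{B}_0}$; since $\mathcal{B}_0$ is countable the product topology on $\mathbb{R}^{\mathcal{B}_0}$ is metrizable, closures are sequential, and it is immaterial whether we close $S$ inside $\mathcal{L}$ or inside $\mathbb{R}^{\mathcal{B}_0}$. The goal is thus $\overline{S}=\mathcal{L}^*$.

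For the inclusion $\overline{S}\subseteq\mathcal{L}^*$ I would argue as follows. If $G$ is edge-vertex transitive, Lemma~\ref{lem-inflim1} gives $h(H,G)=h^*(H,G)=h^*(H,X_G)$ for every $H\in\mathcal{B}_0$, so $\tau(G)$ is precisely the vector $\bigl(h^*(H,X_G)\bigr)_{H\in\mathcal{B}_0}$ attached to the single finite joint distribution $X_G$. Feeding the constant sequence $X^i:=X_G$ into the definition of $\mathcal{L}^*$ shows $\tau(G)\in\mathcal{L}^*$, hence $S\subseteq\mathcal{L}^*$; and since $\mathcal{L}^*$ is compact, and therefore closed, we conclude $\overline{S}\subseteq\mathcal{L}^*$.

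For the reverse inclusion $\mathcal{L}^*\subseteq\overline{S}$ I would take $f\in\mathcal{L}^*$ and fix a sequence $\{X^i=(X^i_1,X^i_2)\}_{i=1}^\infty$ of finite joint distributions with $\lim_{i\to\infty}h^*(H,X^i)=f(H)$ for every $H$ (here $\mathcal{M}_2$, the index set defining $\mathcal{L}^*$, is identified with $\mathcal{B}_0$; this is harmless since adjoining isolated vertices changes neither $h$ nor $h^*$). For each fixed $i$, Theorem~\ref{main} supplies a sequence $\{G^i_j\}_{j=1}^\infty$ of edge-vertex transitive graphs with $\lim_{j\to\infty}h(H,G^i_j)=h^*(H,X^i)$ for all $H\in\mathcal{B}_0$; that is, $\tau(G^i_j)\to\bigl(h^*(H,X^i)\bigr)_{H\in\mathcal{B}_0}$ in $\mathbb{R}^{\mathcal{B}_0}$ as $j\to\infty$. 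Since every $\tau(G^i_j)$ lies in $S$, the limit vector $\bigl(h^*(H,X^i)\bigr)_{H}$ lies in $\overline{S}$; and as $i\to\infty$ these vectors converge coordinatewise to $f$, so $f\in\overline{S}$ because $\overline{S}$ is closed. This is really just a two-fold passage to the limit, and I do not anticipate a substantive obstacle; the only points deserving a word of care are the identification $\mathcal{M}_2\cong\mathcal{B}_0$, the metrizability of the countable product (which legitimizes the ``limit of a limit'' step), and the compactness of $\mathcal{L}^*$ recorded earlier.
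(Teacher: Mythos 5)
Your proof is correct and takes exactly the route the paper intends: the paper offers no argument beyond asserting that the theorem follows from Lemma~\ref{lem-inflim1} and Theorem~\ref{main}, and your two inclusions (the constant sequence $X^i=X_G$ together with closedness of $\mathcal{L}^*$ for one direction, and Theorem~\ref{main} followed by a double passage to the limit in the metrizable product topology for the other) are precisely the details left to the reader. The points you flag as needing care, including the identification of $\mathcal{M}_2$ with $\mathcal{B}_0$, are harmless and consistent with the paper's own conventions.
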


This is a somewhat surprising connection between information theory and group theory.
We finish with a set of linear equations that $\mathcal{L}^*$ satisfies within $\mathcal{L}$

\begin{lemma}\label{lem-inflim2} Let $W\in\mathcal{L}^*$. Then we have the following two properties
\begin{enumerate}
\item $h(H,W)=h(H_1,W)+h(H_2,W)$ if $H$ is obtained from $H_1$ and $H_2$ by identifying a vertex.
\item $h(H,W)=h(H_1,W)+h(H_2,W)-1$ if $H$ is obtained from $H_1$ and $H_2$ by identifying an edge.
\end{enumerate}
\end{lemma}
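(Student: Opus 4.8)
The plan is to establish both identities by tracking what happens to the entropy‑maximizing distributions under the two gluing operations, then dividing by $d^*(P_1,X)$. Since $W\in\mathcal L^*$ there is a sequence of finite joint distributions $\{X^i\}$ with $h^*(H,X^i)\to h(H,W)$ for every $H\in\mathcal B_0$; because both claimed identities are linear with rational coefficients in the values $h(\cdot,W)$, it suffices to prove the corresponding identities for $h^*(\cdot,X)$ for an arbitrary fixed finite $X=(X_1,X_2)$, and then pass to the limit. So the real content is two facts about $d^*$: namely $d^*(H,X)=d^*(H_1,X)+d^*(H_2,X)$ when $H$ is a one‑vertex amalgam of $H_1,H_2$, and $d^*(H,X)=d^*(H_1,X)+d^*(H_2,X)-d^*(P_1,X)$ when $H$ is a one‑edge amalgam. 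Dividing through by $d^*(P_1,X)$ (handling the degenerate independent case by the convention in Definition \ref{infdens}) gives exactly statements (1) and (2).

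For the vertex‑identification case, suppose $H$ is obtained by identifying a vertex $w_1\in V(H_1)$ with a vertex $w_2\in V(H_2)$ lying in the same color class; call the merged vertex $w$. The key is that the entropy maximizer $\theta$ in $Q(H,X)$ is an $H$‑Gibbs measure (Proposition \ref{prop-emax}), hence a Markov random field on $H$, and $w$ is a cut vertex separating the $H_1$‑part from the $H_2$‑part. Therefore under $\theta$ the $H_1$‑side and the $H_2$‑side are conditionally independent given $X_w$. Using $\mathbb H(H_1\text{-side},H_2\text{-side})=\mathbb H(H_1\text{-side})+\mathbb H(H_2\text{-side})-\mathbb H(X_w)$ and the fact that the conditionally‑independent coupling of the maximizers on $Q(H_1,X)$ and $Q(H_2,X)$ over their common marginal at $w$ lies in $Q(H,X)$ and has the maximal possible entropy, one gets $m(H,X)=m(H_1,X)+m(H_2,X)-\mathbb H(X_c)$, where $c$ is the color class of $w$ and $X_c\in\{X_1,X_2\}$. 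Now plug into $d^*(H,X)=-m(H,X)+\sum_i \mathbb H(X_i)|V_i(H)|$ and note $|V_i(H)|=|V_i(H_1)|+|V_i(H_2)|-[i=c]$; the $-\mathbb H(X_c)$ terms cancel exactly, yielding $d^*(H,X)=d^*(H_1,X)+d^*(H_2,X)$. (This is essentially the argument already used in Lemma \ref{lem-emax4}, applied at a cut vertex rather than a leaf.)

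For the edge‑identification case, $H$ is obtained by identifying an edge $e_1$ of $H_1$ with an edge $e_2$ of $H_2$ — equivalently, gluing along a $P_1$. The same Markov‑random‑field reasoning applies: the glued copy of $P_1$ separates $H$ into its two parts, so the maximizer factors as the conditionally‑independent coupling over the marginal on that edge, which by the marginal constraint is exactly $X=(X_1,X_2)$. Hence $m(H,X)=m(H_1,X)+m(H_2,X)-\mathbb H(X)$, while $\mathbb H(X)=d^*(P_1,X)+\mathbb H(X_1)+\mathbb H(X_2)$ by the definition of $d^*$. Combining with $|V_i(H)|=|V_i(H_1)|+|V_i(H_2)|-1$ for both $i$, the bookkeeping gives $d^*(H,X)=d^*(H_1,X)+d^*(H_2,X)-d^*(P_1,X)$, and dividing by $d^*(P_1,X)$ produces the $-1$ in statement (2). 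Finally one passes to the limit along $\{X^i\}$, using that all quantities involved are bounded (Lemma \ref{lem-emax1}) so the limits of sums are sums of limits, and that the continuity of $h^*(H,-)$ respects the amalgamation on the limit object $W$.

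The main obstacle I expect is the careful verification that the conditionally‑independent coupling over the shared marginal is both feasible (lies in $Q(H,X)$) and genuinely optimal — i.e. that no distribution in $Q(H,X)$ can beat it. Feasibility is routine, but optimality requires the Markov‑random‑field structure of the true maximizer: one argues that given any $\mu\in Q(H,X)$, replacing it by the conditionally‑independent coupling of its two side‑marginals over the separating set only increases entropy (the chain rule inequality $\mathbb H(\mu)\le \mathbb H(\mu_{\text{left}})+\mathbb H(\mu_{\text{right}})-\mathbb H(\mu_{\text{sep}})$), and each side‑marginal lies in the corresponding $Q(H_i,X)$. Getting this inequality and the matching constraint on the separating set stated cleanly — especially distinguishing the cut‑vertex case (separator is a single variable, marginal is $X_1$ or $X_2$) from the cut‑edge case (separator is a pair, marginal is $X$ itself) — is where the bulk of the work lies; the index‑counting with $|V_i(H)|$ is then purely mechanical.
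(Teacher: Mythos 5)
Your proposal is correct, but it takes a genuinely different route from the paper. The paper's proof is two lines: since $W\in\mathcal{L}^*$ is (by Theorem \ref{thm-inflim1}) a log-limit of edge-vertex transitive graphs, it suffices to verify the identities for such a graph $G$, where vertex transitivity gives $t(H,G)=t(H_1,G)t(H_2,G)$ for a one-vertex amalgam and edge transitivity gives $t(H,G)=t(H_1,G)t(H_2,G)/t(P_1,G)$ for a one-edge amalgam; taking $-\log$ and dividing by $d(P_1,G)$ finishes. You instead prove the exact identities $d^*(H,X)=d^*(H_1,X)+d^*(H_2,X)$ and $d^*(H,X)=d^*(H_1,X)+d^*(H_2,X)-d^*(P_1,X)$ for \emph{every} finite joint distribution $X$, via feasibility of the conditionally independent coupling over the separator plus the submodularity bound $\mathbb{H}(\mu)\le\mathbb{H}(\mu_L)+\mathbb{H}(\mu_R)-\mathbb{H}(\mu_{\rm sep})$, and then pass to the limit using the definition of $\mathcal{L}^*$. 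Your bookkeeping checks out in both cases (the separator marginal is forced to be $X_c$ at a cut vertex and $X$ itself at a glued edge, and $\mathbb{H}(X)-\mathbb{H}(X_1)-\mathbb{H}(X_2)=-d^*(P_1,X)$ produces the $-1$). What your approach buys is independence from Theorem \ref{main}/\ref{thm-inflim1} and a pointwise additivity statement for $d^*$ that is stronger than the lemma itself; what it costs is length, since the paper gets the result essentially for free from machinery already established. One small remark: your invocation of the Markov random field property of the maximizer is unnecessary (and the step ``Gibbs measure hence MRF'' requires positivity of the factors, which Proposition \ref{prop-emax} does not guarantee); the submodularity argument you give in your final paragraph applies to an arbitrary $\mu\in Q(H,X)$ and already carries the whole proof, so you should drop the MRF language and lead with that.
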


\begin{proof} We have that $W$ is a limit of edge-vertex transitive graphs so it is enough to prove it in the case when $W$ is such a graph. The first equation follows from vertex transitivity since every vertex of $W$ has the same number of copies of $H_1$ and $H_2$ and thus $t(H,W)=t(H_1,W)t(H_2,W)$. The second statement follows in a similar way from edge transitivity.
\end{proof}

\begin{question} Is $\mathcal{L}^*$ characterized by $\mathcal{L}^*\subset\mathcal{L}$ and the linear equations in lemma \ref{lem-inflim2}?
\end{question}

\section{Sparsity exponent}

In dense graph limit theory sparsity (or density) is described by the edge density $t(P_1,G)$.
The natural analogue of edge density in the logarithmic framework is the power $\beta$  to which we have to raise the number of the edges in the complete graph on $V(G)=V_1(G)\cup V_2(G)$ (which is equal to $|V_1(G)||V_2(G)|$) to obtain the number of edges in $G$. Unfortunately this sparsity exponent can not be read off in a simple way using the parameters $h(H,G)$. (Note that $h(P_1,G)$ is always $1$ so it gives no information.)
In this chapter we show a connection between the asymptotic behavior of the graph parameter $H\mapsto h(H,G)$ and the sparsity exponent. We also study how to extend the notion of sparsity to the log limit space $\mathcal{L}$. 

 Let $G\in\mathcal{B}_0$ be a graph, let $$\beta_v(G):=\log|E(G)|/(\log|V_1(G)|+\log|V_2(G)|)$$ and let 
$$\beta_e(G):=\mathbb{H}(X_G)/(\mathbb{H}(X_1)+\mathbb{H}(X_2))$$
where $X_G=(X_1,X_2)$ is a uniform random edge in $G$ with endpoints $X_1$ and $X_2$. 
Using that $\mathbb{H}(X_G)=\log |E(G)|$ , $\log |V_i|\geq \mathbb{H}(X_i)$ for $i=1,2$ and that $0\geq I(X_1;X_2)=\mathbb{H}(X_1)+\mathbb{H}(X_2)-\mathbb{H}(X_G)$ we have that $0<\beta_v(G)\leq\beta_e(G)\leq 1$. If $G$ is regular (i.e. there are two numbers $a,b$ such that every vertex in $V_1$ has degree $a$ and every vertex in $V_2$ has degree $b$) then $X_1$ and $X_2$ have uniform distributions and thus $\beta_v(G)=\beta_e(G)$.  Intuitively we can view $\beta_e(G)$ as an ``edge version'' of sparsity where vertices of small degree count less. If we add isolated points to $G$ then $\beta_e(G)$ does not change. Note that the quantity $\beta_e$ can naturally be extended to arbitrary finite joint distributions $X=(X_1,X_2)$ by essentially the same formula.

It is clear that $\beta_v(G)$ and $\beta_e(G)$ are not determined by $\tau(G)\in\mathcal{L}$ since $h(H,G)=h(H,G_m)$ holds if $G_m$ is an $m$-fold blow up of $G$ however if $m$ goes to infinity we have that $\lim_{m\to\infty}\beta_v(G_m)=\lim_{m\to\infty}\beta_e(G_m)=1$. Despite of this fact it will turn out that if $G$ is regular and twin free (i.e. there are no two distinct vertices with identical neighborhood) then we can reconstruct $\beta_v(G)=\beta_e(G)$ from $\tau(G)$. We continue with two sparsity notions on the log-limit space $\mathcal{L}$.

\begin{definition} For $W\in\mathcal{L}$ let $\beta_0(W)$ denote the infimum of the numbers $\alpha$ such that there is a log-convergent graph sequence $\{G_i\}_{i=1}^\infty$ with limit $W$ and $\liminf_{i\to\infty}\beta_v(G_i)=\alpha$. Let furthermore $\hat{\beta}(W):=\sup_{n\in\mathbb{N}}~(1-1/g_n(W))$ where
$$g_n(W):=h(K_{2,n},W)+h(K_{n,2},W)-h(K_{1,n},W)-h(K_{n,1},W)$$ and $K_{a,b}$ is the complete bipartite graph with $|V_1(K_{a,b})|=a$ and $|V_2(K_{a,b})|=b$.

\end{definition}

\begin{proposition} The parameters $\beta_0,\hat{\beta},\beta_v$ and $\beta_e$ have the following properties.
\begin{enumerate}
\item $\hat{\beta}$ and $\beta_0$ are lower semi continuous i.e. if $\{W_i\}_{i\to\infty}$ is a convergent sequence in $\mathcal{L}$ with limit $W$ then $\liminf_{i\to\infty}\hat{\beta}(W_i)\geq\hat{\beta}(W)$ and $\liminf_{i\to\infty}\beta_0(W_i)\geq\beta_0(W)$.
\item If $W\in\mathcal{L}$ then $\hat{\beta}(W)\leq\beta_0(W)$.
\item If $G\in\mathcal{B}_0$ is arbitrary then $\hat{\beta}(G)\leq\beta_0(G)\leq\beta_v(G)\leq\beta_e(G)$.
\item If $G\in\mathcal{B}_0$ is a regular twin free graph then $\hat{\beta}(G)=\beta_0(G)=\beta_v(G)=\beta_e(G)$. 

\end{enumerate}
\end{proposition}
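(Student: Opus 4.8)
The plan is to prove the chain of (in)equalities by combining three ingredients: the general inequality $\hat\beta(G)\le\beta_0(G)$ from item (2) of the proposition (which one proves first, in the $W$-generality), the easy bounds $\beta_0(G)\le\beta_v(G)\le\beta_e(G)$ from item (3), and then — the real content here — showing that for a regular twin-free $G$ one has $\beta_e(G)\le\hat\beta(G)$, which closes the loop and forces all four quantities to coincide. Since $G$ is regular we already know $\beta_v(G)=\beta_e(G)$, so only the bound $\beta_e(G)\le\hat\beta(G)$ remains, and by definition of $\hat\beta$ it suffices to exhibit, for every $\varepsilon>0$, some $n$ with $1-1/g_n(G)\ge\beta_e(G)-\varepsilon$, i.e. $g_n(G)\ge 1/(1-\beta_e(G)+\varepsilon)$.

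First I would compute $g_n(G)$ explicitly for a regular graph. Write $d_1=|V_2(G)|t(P_1,G)$ and $d_2=|V_1(G)|t(P_1,G)$ for the two (constant) degrees, and let $N_i=|V_i(G)|$. For the star $K_{1,n}$ (with the center in $V_1$) one has $t(K_{1,n},G)=\mathbb{E}_{v\in V_1}(\deg(v)/N_2)^n=(d_1/N_2)^n$ by regularity, so $d(K_{1,n},G)=n\log(N_2/d_1)=n\,d(P_1,G)$ and hence $h(K_{1,n},G)=n$; symmetrically $h(K_{n,1},G)=n$. Thus $g_n(G)=h(K_{2,n},G)+h(K_{n,2},G)-2n$. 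For $K_{2,n}$ (two vertices in $V_1$, $n$ in $V_2$) a random homomorphism picks two vertices $u,u'\in V_1$ and needs all $n$ targets in $V_2$ to lie in the common neighborhood, so $t(K_{2,n},G)=\mathbb{E}_{u,u'}(|N(u)\cap N(u')|/N_2)^n$. I would isolate the diagonal term $u=u'$: it contributes $\frac1{N_1}(d_1/N_2)^n$, while the off-diagonal terms are asymptotically negligible \emph{in the $h$-normalization} precisely because $G$ is twin-free — for $u\ne u'$ twin-freeness gives $|N(u)\cap N(u')|<d_1$ strictly, so those terms decay faster. Hence $\log t(K_{2,n},G)=\log\frac1{N_1}+n\log(d_1/N_2)+o(1)$ as $n\to\infty$ (the $o(1)$ being additive in the logarithm, controlled by the twin-free gap), giving
\[
h(K_{2,n},G)=\frac{\log N_1 + n\log(N_2/d_1)+o(1)}{\log(N_2/d_1)}=n+\frac{\log N_1}{\log(N_2/d_1)}+o(1).
\]
Symmetrically $h(K_{n,2},G)=n+\log N_2/\log(N_1/d_2)+o(1)$.

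Plugging into $g_n(G)$, the leading $2n$ terms cancel and I obtain $\lim_{n\to\infty}g_n(G)=\dfrac{\log N_1}{\log(N_2/d_1)}+\dfrac{\log N_2}{\log(N_1/d_2)}$. Now I would rewrite this limit in terms of $\beta_e(G)=\beta_v(G)$. By regularity $\mathbb{H}(X_i)=\log N_i$ and $\mathbb{H}(X_G)=\log|E(G)|=\log(N_1 d_1)=\log(N_2 d_2)$, so $\beta:=\beta_v(G)=\dfrac{\log|E(G)|}{\log N_1+\log N_2}$. A short manipulation (using $\log(N_2/d_1)=\log N_1+\log N_2-\log|E(G)|=(1-\beta)(\log N_1+\log N_2)$, and likewise for $\log(N_1/d_2)$) shows that each summand equals $\log N_i/\big((1-\beta)(\log N_1+\log N_2)\big)$, whence $\lim_n g_n(G)=\dfrac{\log N_1+\log N_2}{(1-\beta)(\log N_1+\log N_2)}=\dfrac{1}{1-\beta}$. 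Therefore $\hat\beta(G)\ge\sup_n(1-1/g_n(G))\ge 1-\dfrac{1}{\lim_n g_n(G)}=\beta=\beta_e(G)$, and combined with item (3) all four parameters are equal.

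\textbf{Main obstacle.} The delicate point is making rigorous the claim that the off-diagonal terms in $t(K_{2,n},G)$ do not contribute to $h(K_{2,n},G)$ in the limit $n\to\infty$: one needs that twin-freeness yields a \emph{uniform} strict gap $\max_{u\ne u'}|N(u)\cap N(u')|\le d_1-1$ (true since $V_1$ is finite), so that the off-diagonal sum is $O\big(((d_1-1)/N_2)^n\big)$ and thus $\log t(K_{2,n},G)=\log\big(\tfrac1{N_1}(d_1/N_2)^n\big)+\log(1+o(1))$; the additive $o(1)$ in the logarithm then survives division by $d(P_1,G)$ (a fixed positive constant independent of $n$) and vanishes as $n\to\infty$. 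One must also check the base case that $G$ genuinely has no isolated vertices and no twin vertices on \emph{either} side, so that both $K_{2,n}$ and $K_{n,2}$ estimates apply; regularity plus $E(G)\ne\emptyset$ handles the former, and twin-freeness is assumed on both sides. The remaining steps (the explicit degree computations, the $\beta$-rewriting, and invoking items (1)–(3)) are routine.
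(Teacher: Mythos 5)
Your treatment of item (4) is correct and is essentially the paper's own argument. The paper proves the identity
$$g_n(G)=\frac{\log|V_1(G)|+\log|V_2(G)|-T_n}{\log|V_1(G)|+\log|V_2(G)|-\log|E(G)|},$$
where $T_n$ collects the logarithms of the common-neighborhood moment sums $\sum_{v,w\in V_i}A_{v,w,i}^n$ and $\sum_{v\in V_i}A_{v,v,i}^n$, and then shows $T_n\to 0$ for regular twin-free $G$ because $A_{v,v,i}=d_i$ while $A_{v,w,i}<d_i$ for $v\ne w$. That is exactly your diagonal/off-diagonal split with the uniform gap $d_i-1$, and your conclusion $\lim_n g_n(G)=1/(1-\beta_v(G))$ is the same computation written through homomorphism densities instead of through $T_n$. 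The regularity step $\beta_v(G)=\beta_e(G)$ is also handled the same way in both arguments.

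The gap is that the proposition has four items and you only prove the last one. In particular, the inequality $\hat\beta(G)\le\beta_v(G)$ is the substantive half of item (3), and it also drives item (2): the paper deduces $\hat\beta(W)\le\beta_0(W)$ by taking $G_i\to W$ with $\beta_v(G_i)\to\beta_0(W)$ and using the lower semicontinuity of $\hat\beta$ from item (1). None of this is supplied by your argument as written, since you only evaluate $g_n$ under the regularity and twin-freeness hypotheses. The fix is cheap and already implicit in your work: the displayed identity for $g_n(G)$ holds for \emph{every} $G\in\mathcal{B}_0$, with $T_n\ge 0$ (the diagonal sum is a subsum of the full sum) and $g_n(G)\ge 0$ (by monotonicity, $t(K_{1,n},G)\ge t(K_{2,n},G)$ and $t(K_{n,1},G)\ge t(K_{n,2},G)$), which yields $1-1/g_n(G)\le\beta_v(G)$ for all $n$ and hence $\hat\beta(G)\le\beta_v(G)$ in general. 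You should state and prove that general form rather than deferring items (1)--(3) as routine; item (1) and the remaining inequalities $\beta_0\le\beta_v\le\beta_e$ are indeed easy, but $\hat\beta\le\beta_v$ for arbitrary $G$ is the one piece of items (2)--(3) that genuinely needs the computation you carried out only in the regular case.
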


\begin{proof} We start with the first statement. Assume that $\{W_i\}_{i=1}^\infty$ converges to $W$ in $\mathcal{L}$. By definition we have $\lim_{i\to\infty} g_n(W_i)=g_n(W)$ for every $n$ and thus $\liminf_{i\to\infty}\hat{\beta}(W_i)\geq g_n(W)$. This implies the lower semicontinuity of $\hat{\beta}$.

To see the lower semicontinuity of $\beta_0$ choose elements $G_i\in\mathcal{B}_0$ such that $\kappa(G_i,W_i)\leq 1/n$ and $|\beta_v(G_i)-\beta_0(W_i)|\leq 1/n$. We have that $\liminf_{i\to\infty} \beta_v(G_i)=\liminf_{i\to\infty} \beta_0(W_i)$ and that $\lim_{i\to\infty} G_i=W$. This shows that $\beta_0(W)\leq\liminf_{i\to\infty}\beta_0(W_i)$.

We continue with the proof of $\hat{\beta}(G)\leq\beta_v(G)$ for $G\in\mathcal{B}_0$. For $v,w\in V_i(G)$ let $A_{v,w,i}$ denote the number of common neighbors of $v$ and $w$ in $G$. Let $$T_n:=\sum_{i=1}^2\Bigl(\log\Bigl(\sum_{v,w\in V_i(G)}A_{v,w,i}^n\Bigr)-\log\Bigl(\sum_{v\in V_i(G)}A_{v,v,i}^n\Bigr)\Bigr).$$
Note that the four terms in the above sum are the logarithms of $|\Hom(K_{2,n},G)|$ and $|\Hom(K_{n,2},G)|$ with plus sign and the logarithms of $|\Hom(K_{1,n},G)|$ and $|\Hom(K_{n,1},G)|$ with minus sign.
Using this fact an elementary calculation shows that
\begin{equation}\label{usq-eq1}
g_n(G)=(\log|V_1(G)|+\log|V_2(G)|-T_n)/(\log |V_1(G)|+\log |V_2(G)|-\log |E(G)|).
\end{equation}
Observe that by $t(K_{1,n},G)\geq t(K_{2,n},G)$ and $t(K_{n,1},G)\geq t(K_{n,2},G)$ we have that $g_n(G)\geq 0$. 
Thus by $T_n\geq 0$ and (\ref{usq-eq1}) we obtain that $\beta_v(G)\geq 1-1/g_n(G)$. This proves that $\hat{\beta}(G)\leq\beta_v(G)$.

We prove now that $\hat{\beta}(W)\leq\beta_0(W)$ holds for $W\in\mathcal{L}$. It is clear that we can choose a sequence $\{G_i\}_{i=1}^\infty$ in $\mathcal{B}_0$ with limit $W$ such that $\lim_{i\to\infty}\beta_v(G_i)=\beta_0(W)$. Using the lower semicontinuity of $\hat{\beta}$ and the fact that $\hat{\beta}(G_i)\leq\beta_v(G_i)$ we obtain that $\hat{\beta}(W)\leq\liminf_{i\to\infty}\hat{\beta}(G_i)\leq\beta_0(W)$.

Now let us assume that $G$ is twin free and regular. To show $\beta_v(G)=\hat{\beta}(G)$ it is enough to prove that $\lim_{n\to\infty} T_n=0$. This is easy to see from the fact that $A_{v,v,i}=d_i$ holds universally in $V_i$ where $d_1$ and $d_2$ are the uniform degrees and furthermore $A_{v,w,i}<d_i$ holds if $v\neq w$ are in $V_i$.  

To complete the proof we need to show that $\beta_0(G)\leq\beta_v(G)$ holds for $G\in\mathcal{B}_0$. This is trivial since the constant sequence $G$ converges to $G$ in $\mathcal{L}$.
\end{proof}


\section{Quasi-randomness}\label{chaprand}

In dense graph limit theory a sequence of graphs $\{G_i\}_{i=1}^\infty$ is quasi random with density $0\leq p\leq 1$ if $\lim_{i\to\infty}t(H,G_i)=p^{|E(H)|}$ holds for every graph $H\in\mathcal{B}$. For $0<p\leq 1$ these sequences are log-convergent but their limit in $\mathcal{L}$ does not depend on $p$. The limit object is always the graph parameter defined by $f(H):=|E(H)|$. In other words there is a unique dense random object (represented by $f$) in the graph log-limit space. However we show in this chapter that log-convergence differentiates between an infinite family of different sparse quasi-random objects related to sparsity exponents. 

For fix $0<\beta\leq 1$ and $0<\alpha<1$ let $G=G(n,\beta,\alpha)$ denote the random graph model where we have that $|V_1(G)|=\lceil n^{\alpha}\rceil, |V_2(G)|=\lceil n^{1-\alpha}\rceil$ and edges are created between pairs of vertices $v\in V_1(G), w\in V_2(G)$ independently with probability $n^{\beta-1}$. We investigate the log-limits of such random graphs where $\beta,\alpha$ are fixed and $n$ goes to infinity.

\begin{definition} For a graph $H\in\mathcal{B}_0$, $0<\beta\leq 1$ and $0<\alpha<1$  let $\alpha_1:=\alpha,\alpha_2:=1-\alpha$ and let $R(\beta,\alpha,H)$ denote the minimum of
\begin{equation}\label{quasi-eq1}
|E(H')|+(1-\beta)^{-1}\sum_{i=1}^2  (|V_i(H)|-|V_i(H')|)\alpha_i 
\end{equation}
where $H'$ runs through all homomorphic images of $H$ (this means that there is a homomorphism from $H$ to $H'$ which is surjective on the vertices and on the edges of $H$.) We denote by $R(\beta,\alpha)$ the graph parameter that maps $H$ to $R(\beta,\alpha,H)$.
\end{definition} 

Note that if $\beta=1$ then it is natural to define $R(\beta,\alpha,H)$ to be $|E(H)|$ since this is the limit of it as $\beta$ goes to $1$. In general we have that $0\leq R(\beta,\alpha,H)\leq |E(H)|$ where the uppur bound is given by the choice $H'=H$. The next proposition implies that $R(\beta,\alpha)$ is a graph parameter in $\mathcal{L}$ and that it can be obtained as the limit of Erd\H os-R\'enyi type random graphs. 
In the rest of this chapter we prove the next theorem.

\begin{theorem}\label{quasi-thm} For every fix pair $0<\beta\leq 1, 0<\alpha<1$ and graph $H\in\mathcal{B}_0$ we have that $h(H,G(n,\beta,\alpha))$ converges to $R(\beta,\alpha,H)$ in probability as $n$ goes to infinity. It implies that $R(\beta,\alpha)\in\mathcal{L}$.
\end{theorem}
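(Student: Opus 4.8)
The plan is to estimate $t(H,G(n,\beta,\alpha))$ up to subpolynomial factors by a first- and second-moment argument, and then take logarithms. The key observation is that for a random map $\phi: V(H) \to V(G)$, the map factors through a homomorphic image: if two vertices of $H$ are sent to the same vertex of $G$ then the constraints on edges become constraints on the image graph. So I would partition the homomorphisms $H \to G$ according to the partition of $V(H)$ that $\phi$ induces; each such partition corresponds (when $\phi$ is actually a homomorphism) to a homomorphic image $H'$ of $H$, and conversely an embedding of $H'$ into $G$ lifts to a homomorphism of $H$. Thus $|\Hom(H,G)| = \sum_{H'} (\text{number of embeddings of } H' \text{ onto a copy in } G)$, the sum over homomorphic images $H'$ of $H$ (a finite set depending only on $H$). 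This reduces the problem to estimating, for each fixed $H'$, the number of injective-on-vertices homomorphisms of $H'$ into $G$ whose image is an induced-on-edges copy — i.e.\ the subgraph count of $H'$ in $G$.

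Next I would compute the expected number of copies of a fixed $H'$ in $G(n,\beta,\alpha)$. Each vertex in $V_i(H')$ has $\lceil n^{\alpha_i} \rceil$ choices, contributing $n^{|V_i(H')|\alpha_i + o(1)}$, and each of the $|E(H')|$ edges must be present, contributing $(n^{\beta-1})^{|E(H')|}$. So the expected count is $n^{\gamma(H') + o(1)}$ where
$$
\gamma(H') = \sum_{i=1}^2 |V_i(H')|\alpha_i + (\beta-1)|E(H')|.
$$
The dominant term in $|\Hom(H,G)|$ will come from the $H'$ maximizing $\gamma(H')$, and a short computation shows that, after normalizing, this matches the minimization defining $R(\beta,\alpha,H)$: indeed $\mathbb{H}(X_G) \approx \log|E(G)| = (\beta + o(1))\log n$ after noting $|V_1||V_2| = n^{1+o(1)}$, while $\log|V_i(G)| = \alpha_i \log n + o(\log n)$, so $d(H,G)/\log n \to \min_{H'}\big(|E(H')|(1-\beta) + \sum_i(|V_i(H)| - |V_i(H')|)\alpha_i\big) \cdot \frac{1}{1} $ up to the right rescaling, and dividing by $d(P_1,G)/\log n \to (1-\beta)$ (since $H'=P_1$, $|E(P_1)|=1$, $|V_i(P_1)|=1$) yields exactly $R(\beta,\alpha,H)$. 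I would also need the regime $\beta = 1$ handled separately, where $d(P_1,G) \to 0$ and the claimed value is $|E(H)|$; here one checks directly that $G(n,1,\alpha)$ is a dense quasirandom sequence.

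The concentration — convergence \emph{in probability} rather than just in expectation — is where the real work lies, and I expect it to be the main obstacle. For the subgraphs $H'$ achieving or nearly achieving the maximum of $\gamma$, I would invoke the machinery for counting small subgraphs in sparse random graphs (this is exactly what \cite{B} provides): the count of copies of $H'$ is concentrated around its mean up to a $n^{o(1)}$ factor provided every subgraph $H'' \subseteq H'$ satisfies an appropriate density/balancedness condition relative to $n^{\beta-1}$. The subtlety is that for subgraphs of $H'$ that are ``too dense'' the expected count can be $o(1)$ and one must argue such copies simply do not appear (union bound / first moment), so they do not spoil the logarithm; while for the ``balanced'' part a second-moment or martingale concentration gives the lower bound. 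Packaging all homomorphic images $H'$ simultaneously — there are only boundedly many — and taking logs then kills all $n^{o(1)}$ errors, giving $\log|\Hom(H,G)| = \max_{H'}\gamma(H')\log n + o_p(\log n)$. The final step, that $R(\beta,\alpha) \in \mathcal{L}$, is then immediate: pick for each $n$ a realization of $G(n,\beta,\alpha)$ with $h(H,G) \to R(\beta,\alpha,H)$ along a suitable diagonal sequence (using that there are countably many $H$ and convergence in probability for each), so $\tau(G_n) \to R(\beta,\alpha)$ in $\mathbb{R}^{\mathcal{B}_0}$, whence the limit lies in the closure $\mathcal{L}$.
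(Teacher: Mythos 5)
Your overall strategy is the same as the paper's: decompose $|\Hom(H,G_n)|$ over homomorphic images $H'$ of $H$, compute the first moment $n^{D(H')+o(1)}$ with $D(H')=\sum_i\alpha_i|V_i(H')|-(1-\beta)|E(H')|$, obtain the upper bound by Markov's inequality, obtain the lower bound by a second-moment argument for the maximizing $H'$, and pass to logarithms (the paper packages this last step as a ``logarithmic Chebyshev'' lemma). Your normalization computation recovering $R(\beta,\alpha,H)$, and the diagonal argument giving $R(\beta,\alpha)\in\mathcal{L}$, are both fine.

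The gap is in the lower bound. You correctly observe that second-moment concentration for the count of copies of $H'$ requires every subgraph $H''\subseteq H'$ to satisfy $D(H'')>0$ (otherwise $\mathrm{Var}/\mathbb{E}^2$ contains a term of order $n^{-D(H'')}$ that does not vanish), but you never establish that the maximizer $H'$ of $D$ among homomorphic images actually satisfies this condition. Your proposed workaround --- a first-moment/union bound showing that copies of over-dense subgraphs ``do not appear'' --- only helps with an upper bound; it contributes nothing toward the required statement $\log|\Hom_0(H',G_n)|\geq(D(H')-\epsilon)\log n$ with high probability, and if the maximizer were unbalanced you would be left with no witness for the lower bound at all. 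The paper closes exactly this hole with a separate combinatorial lemma (Lemma \ref{quasi-lem1}): if $H'$ maximizes $D$ among homomorphic images and some subgraph $H_2\subseteq H'$ with at least one edge had $D(H_2)\leq 0$, then contracting $V_1(H_2)$ and $V_2(H_2)$ each to a single vertex yields another homomorphic image $H_3$ with $D(H_3)>D(H')$, a contradiction; hence $M(H')>0$ holds automatically for the maximizer. This step uses the bipartiteness of $H$ in an essential way (the paper remarks in its concluding section that this is precisely why the test graphs must remain bipartite even in the graph setting), so it is not a formality that can be absorbed into ``the machinery of \cite{B}'' --- it is the one nontrivial structural fact your proof is missing.
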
 

Note that the notion of convergence in probability makes sense if random variables take values in $\mathbb{R}\cup\{\infty\}$ where the $\infty$ symbol stands for ``not defined''. This extension is important since with a very small probability $G=G(n,\beta,\alpha)$ is empty and thus $h(H,G)$ is not defined in this case. 
To prove theorem \ref{quasi-thm} we will need some preparation. For maintaining symmetry in our formulas let us introduce $\alpha_1:=\alpha$ and $\alpha_2:=1-\alpha$.
For a graph $H\in\mathcal{B}_0$ let $D(H):=\alpha_1|V_1(H)|+\alpha_2|V_2(H)|-(1-\beta)|E(H)|$ and let $M(H)$ denote the minimum of $D(H')$ where $H'$ runs through the subgraphs in $H$. Note that the quantities $D(H)$ and $M(H)$ depend on $\alpha_1,\alpha_2,\beta$ but these constants are fixed throughout the proof of theorem \ref{quasi-thm}. We will use the short hand notation $G_n$ for the random graph model $G(n,\beta,\alpha)$.
For two graph $H$ and $G$ let us denote by $\Hom_0(H,G)$ the set of injective homomorphisms from $H$ to $G$. 
We will use the next logarithmic version of Chebyshev's inequality.

\begin{lemma}\label{logcheb} Let $\{X_i\}_{i=1}^\infty$ be a sequence of non negative random variables. Assume that  $\lim_{i\to\infty}\mathbb{E}(X_i)=+\infty$ and that $\lim_{i\to\infty}\sigma(X_i)/\mathbb{E}(X_i)=0$. Then $\log X_i/\log \mathbb{E}(X_i)$ converges to $1$ in probability as $i$ goes to infinity.
\end{lemma}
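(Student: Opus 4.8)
The plan is to deduce this ``logarithmic Chebyshev'' statement from the ordinary Chebyshev inequality applied at two carefully chosen scales. Fix $\e>0$ and set $a_i:=\mathbb{E}(X_i)$, which tends to $+\infty$, so that for all large $i$ we have $a_i>1$ and $\log a_i>0$. The event $\{|\log X_i/\log a_i-1|\geq\e\}$ is contained in the union of the two events $\{X_i\geq a_i^{1+\e}\}$ and $\{X_i\leq a_i^{1-\e}\}$, so it suffices to bound the probabilities of these two separately. For the upper event, Markov's inequality gives $\mathbb{P}(X_i\geq a_i^{1+\e})\leq a_i/a_i^{1+\e}=a_i^{-\e}\to 0$; note this only uses $X_i\geq 0$ and $\mathbb{E}(X_i)\to\infty$, not the variance hypothesis.

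For the lower event $\{X_i\leq a_i^{1-\e}\}$ I would invoke the standard Chebyshev bound. Since $a_i^{1-\e}=a_i/a_i^\e$ and $a_i^\e\to\infty$, for large $i$ we have $a_i^{1-\e}\leq a_i/2$, so the event $\{X_i\leq a_i^{1-\e}\}$ is contained in $\{|X_i-a_i|\geq a_i/2\}$, whence
\begin{equation*}
\mathbb{P}(X_i\leq a_i^{1-\e})\leq \mathbb{P}(|X_i-\mathbb{E}(X_i)|\geq a_i/2)\leq \frac{4\sigma(X_i)^2}{a_i^2}=4\left(\frac{\sigma(X_i)}{\mathbb{E}(X_i)}\right)^2,
\end{equation*}
which tends to $0$ by hypothesis. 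Combining the two bounds, $\mathbb{P}(|\log X_i/\log a_i-1|\geq\e)\to 0$ for every $\e>0$, which is exactly convergence in probability to $1$.

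One small technical point to address cleanly in the write-up: the random variable $\log X_i/\log a_i$ is ill-defined when $X_i=0$ (one gets $-\infty$ or, with the paper's ``not defined'' convention, the symbol $\infty$). This is harmless because the event $\{X_i=0\}$ is swallowed by the lower event $\{X_i\leq a_i^{1-\e}\}$ already bounded above, so it contributes nothing in the limit; I would simply remark that $\{X_i=0\}\subseteq\{|X_i-\mathbb{E}(X_i)|\geq a_i/2\}$ for large $i$. I do not anticipate a genuine obstacle here — the only thing requiring a little care is arranging the inequalities $a_i^{1-\e}\leq a_i/2\leq a_i^{1+\e}$ for $i$ large, which follows immediately from $a_i\to\infty$, and making sure the two-sided split of the target event is stated correctly.
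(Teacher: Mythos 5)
Your proof is correct and follows essentially the same route as the paper: both split the event $\{|\log X_i/\log \mathbb{E}(X_i)-1|\geq\epsilon\}$ into the upper-tail event $\{X_i\geq \mathbb{E}(X_i)^{1+\epsilon}\}$ and the lower-tail event $\{X_i\leq \mathbb{E}(X_i)^{1-\epsilon}\}$ and control them with Chebyshev-type bounds using $\sigma(X_i)=o(\mathbb{E}(X_i))$. The only (harmless) cosmetic difference is that you dispatch the upper tail with Markov's inequality, whereas the paper absorbs both tails into a single Chebyshev estimate with deviation threshold $\mathbb{E}(X_i)-\mathbb{E}(X_i)^{1-\epsilon}$.
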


\begin{proof} We have that $\mathbb{P}(|\log X_i/\log \mathbb{E}(X_i)-1|\geq\epsilon)$ is equal to $$\mathbb{P}(X_i\geq\mathbb{E}(X_i)^{1+\epsilon})+\mathbb{P}(X_i\leq \mathbb{E}(X_i)^{1-\epsilon})$$ which is less than
$$2\mathbb{P}(|X_i-\mathbb{E}(X_i)|\geq \mathbb{E}(X_i)-\mathbb{E}(X_i)^{1-\epsilon})$$ if $\mathbb{E}(X_i)$ is big enough. Using that $\sigma(X_i)=o(\mathbb{E}(X_i))$ and $\mathbb{E}(X_i)^{1-\epsilon}=o(\mathbb{E}(X_i))$ we obtain by Chebyshev's inequality that the above probability goes to $0$.
\end{proof}

\medskip

The next lemma is a basically a bipartite version of a result by Bollob\'as \cite{B}.

\begin{lemma}\label{quasi-lem2} Let $H\in\mathcal{B}_0$ such that $M(H)>0$. Then $\log |{\rm Hom}_0(H,G_n)|/\log n$ converges to  $D(H)$ in probability as $n$ goes to infinity.
\end{lemma}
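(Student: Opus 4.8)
The plan is to apply Lemma \ref{logcheb} to the random variable $X_n := |\Hom_0(H,G_n)|$, so the main task is a first- and second-moment computation for the number of injective homomorphic copies of $H$ in the Erd\H{o}s--R\'enyi bipartite random graph $G_n$. First I would compute $\mathbb{E}(X_n)$. An injective homomorphism is determined by an injective choice of images for the vertices of $H$ in the appropriate color classes — there are $(1+o(1))|V_1(G_n)|^{|V_1(H)|}|V_2(G_n)|^{|V_2(H)|}$ such choices — and each prescribed edge of $H$ must be present, which happens with probability $n^{\beta-1}$ independently. Since $|V_1(G_n)| = \lceil n^\alpha\rceil$ and $|V_2(G_n)| = \lceil n^{1-\alpha}\rceil$, this gives
$$\mathbb{E}(X_n) = (1+o(1))\, n^{\alpha_1|V_1(H)| + \alpha_2|V_2(H)| + (\beta-1)|E(H)|} = (1+o(1))\, n^{D(H)}.$$
In particular $\log \mathbb{E}(X_n)/\log n \to D(H)$, and since $M(H) > 0$ implies $D(H) \ge M(H) > 0$ (taking $H' = H$ in the definition of $M$), we get $\mathbb{E}(X_n) \to \infty$.

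The heart of the argument is showing $\sigma(X_n)/\mathbb{E}(X_n) \to 0$, equivalently $\mathbb{E}(X_n^2)/\mathbb{E}(X_n)^2 \to 1$. I would expand $\mathbb{E}(X_n^2)$ as a sum over ordered pairs $(f,g)$ of injective homomorphisms of $H$; grouping by the isomorphism type of the overlap, the pair $(f,g)$ contributes a term governed by the subgraph $J \subseteq H$ on which the images of $f$ and $g$ coincide. The contribution of the overlap type $J$ is of order $n^{2D(H) - D(J)}$: we "save" a factor corresponding to the vertices and edges that are shared. Therefore
$$\frac{\mathbb{E}(X_n^2)}{\mathbb{E}(X_n)^2} = (1+o(1))\sum_{J} n^{-D(J)},$$
where $J$ ranges over subgraphs of $H$ (including the empty subgraph, which gives the main term $n^0 = 1$). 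The hypothesis $M(H) > 0$ says precisely that $D(J) > 0$ for every non-empty subgraph $J$ of $H$ (note every subgraph of $H$ lies in $\mathcal{B}$, and the empty subgraph contributes the leading $1$), so every term other than the leading one is $n^{-D(J)} = o(1)$; as there are only finitely many subgraph types, the whole correction is $o(1)$. Hence $\mathbb{E}(X_n^2) = (1+o(1))\mathbb{E}(X_n)^2$, which gives $\sigma(X_n) = o(\mathbb{E}(X_n))$.

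With both hypotheses of Lemma \ref{logcheb} verified, we conclude that $\log X_n / \log \mathbb{E}(X_n) \to 1$ in probability, and combining with $\log \mathbb{E}(X_n) = (D(H) + o(1))\log n$ yields $\log|\Hom_0(H,G_n)|/\log n \to D(H)$ in probability, as claimed. The main obstacle I anticipate is bookkeeping in the second-moment expansion: one must be careful that, when the overlap $J$ is not an induced subgraph or when $f$ and $g$ share vertices but not all edges among them, the edge-probability factors still combine to give exactly $n^{2D(H)-D(J)}$ up to constants, and that injectivity (rather than mere homomorphism) does not introduce extra cross terms — this is where the bipartite structure and the definition of $D$ via $\alpha_1,\alpha_2$ must be used cleanly. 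This is exactly the point where we follow the Bollob\'as-type argument of \cite{B}, adapted to the two color classes.
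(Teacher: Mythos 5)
Your proposal is correct and follows essentially the same route as the paper: compute $\mathbb{E}(X_n)=n^{D(H)+o(1)}$, bound the variance by a sum of terms $n^{2D(H)-D(H')}$ over overlap subgraphs $H'\subseteq H$, use $M(H)>0$ to make each non-trivial term $o(\mathbb{E}(X_n)^2)$, and conclude via Lemma \ref{logcheb}. The paper organizes the second moment as a covariance sum over pairs $\phi\sim\psi$ sharing an edge rather than over all overlap types, but this is only a cosmetic difference in bookkeeping.
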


\begin{proof} Let $X_n$ be the random variable $|\Hom_0 (H,G_n)|$. We start by computing $\mathbb{E}(X_n)$. Let $L_n$ be the set of pairs of injective maps $V_1(H)\rightarrow V_1(G_n),V_2(H)\rightarrow V_2(G_n)$. We have that $|L_n|=n^{\alpha_1|V_1(H)|+\alpha_2|V_2(H)|+o(1)}$. For every $\phi\in L_n$ the probability that $\phi$ gives a homorphism is $n^{(\beta-1)|E(H)|}$. Thus $\mathbb{E}(X_n)$ is $n^{D(H)+o(1)}$. Using lemma \ref{logcheb} it is enough to show that $\sigma(X_n)=o(\mathbb{E}(X_n))$ so we continue by estimating the variance of $X_n$.

Each element $\phi\in L_n$ gives a copy $\phi(H)$ of $H$ on the vertex set $V_1(G_n)\cup V_2(G_n)$. For $\phi\in L_n$ let $I_\phi$ be the indicator function of the event that $\phi(H)\subseteq G_n$. We write $\phi\sim\psi$ if $E(\phi(H))\cap E(\psi(H))\neq\emptyset$. 
We have that $${\rm Var}(X_n)=\sum_{\phi\in L_n}\sum_{\psi\sim\phi} cov(I_\psi,I_\phi)=\sum_{\phi\in L_n}\sum_{\psi\sim\phi} \mathbb{E}(I_\psi I_\phi)\leq O\bigl(\sum_{H'\subseteq H}n^{2D(H)-D(H')}\Bigr).$$
It follows that
$${\rm Var}(X_n)/\mathbb{E}(X_n)^2=O\Bigl(\sum_{H'\subseteq H}n^{-D(H')}\Bigr)=o(1).$$ This completes the proof.

\end{proof}

\medskip

\begin{lemma}\label{quasi-lem1} Let $H'$ be a homomorphic image of a graph $H\in\mathcal{B}_0$ that maximizes $D(H')$. Then $M(H')>0$.
\end{lemma}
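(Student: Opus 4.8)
The plan is to argue by contradiction: suppose $H'$ maximizes $D$ among homomorphic images of $H$ but $M(H')\le 0$. By definition of $M$, this means there is a subgraph $H''\subseteq H'$ with $D(H'')\le 0$, i.e.\ $\alpha_1|V_1(H'')|+\alpha_2|V_2(H'')|\le(1-\beta)|E(H'')|$. The strategy is to use $H''$ to build a new homomorphic image of $H$ with strictly larger $D$-value, contradicting maximality. The natural candidate is the homomorphic image $H^*$ obtained from $H'$ by contracting the subgraph $H''$ — more precisely, by identifying all vertices of $V_i(H'')$ to a single vertex in $V_i$, for $i=1,2$ (and keeping the remaining vertices and edges of $H'$, with multiple edges collapsed). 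Since there is a surjective homomorphism $H\to H'$ and an obvious surjective homomorphism $H'\to H^*$, the composition shows $H^*$ is a homomorphic image of $H$.

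The key computation is then to compare $D(H^*)$ with $D(H')$. Contracting $H''$ removes $|V_i(H'')|-1$ vertices from side $i$ (for each $i$ such that $V_i(H'')\ne\emptyset$), so the vertex contribution to $D$ changes by $-\sum_i\alpha_i(|V_i(H'')|-1)$. The edge count can only drop, and in fact all edges of $H''$ become loops or parallel edges and get absorbed; the crude bound is that we lose at least... here one must be a little careful, since contracting could in principle remove edges outside $H''$ too only by making them parallel. The clean way: the number of edges removed is at least $|E(H'')|-1$ if $H''$ is connected, and in general at least $|E(H'')|-c$ where $c$ is the number of components of $H''$; but $D(H'')\le0$ already forces $|E(H'')|$ to be large relative to $|V(H'')|$, and one checks $D(H^*)-D(H')\ge -\sum_i\alpha_i(|V_i(H'')|-1)+(1-\beta)(|E(H'')|-1)$ (in the connected case), which equals $-D(H'')+\sum_i\alpha_i - (1-\beta) > 0$ provided $\sum_i\alpha_i\ge 1-\beta$, i.e.\ $1 \ge 1-\beta$, which always holds, and is strict because $D(H'')\le 0$; one should double-check the borderline constants, and reduce to $H''$ connected by first replacing $H''$ with a connected subgraph still having $D\le 0$ (take the component of $H''$ minimizing $D$ — its $D$-value is at most $D(H'')/(\#\text{components})\le 0$... actually at most $D(H'')$ divided appropriately, so $\le 0$).

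The main obstacle I expect is the edge-counting in the contraction step: making the inequality $D(H^*)>D(H')$ genuinely rigorous requires tracking which edges survive the identification and ruling out pathological collapses, and handling the degenerate cases ($H''$ supported on one side only, $H''$ a single edge, or $\beta=1$). The reduction to connected $H''$ and a clean statement of how many edges disappear under vertex identification is where the real work lies; everything else is bookkeeping with the linear functional $D$. An alternative route, if the contraction bookkeeping proves messy, is to instead delete the vertices of $H''$ from $H'$ one side at a time and compare $D$-values, but contraction keeps $H^*$ a homomorphic image of $H$ whereas deletion generally does not, so contraction is the right move.
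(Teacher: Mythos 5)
Your proposal is correct and follows essentially the same route as the paper: contract both sides of the offending subgraph $H''$ with $D(H'')\le 0$ to single vertices, note that all $|E(H'')|\ge 1$ edges between them collapse to one edge so $|E(H^*)|\le |E(H')|-|E(H'')|+1$, and conclude $D(H^*)-D(H')\ge -D(H'')+\beta>0$, contradicting maximality. The connectivity worry you raise is moot for the contraction you chose (identifying \emph{all} of $V_i(H'')$ to one point on each side already merges every edge of $H''$ into a single edge regardless of components), which is exactly how the paper's proof handles it.
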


\begin{proof} Assume by contradiction that $H_2$ is a subgraph in $H'$ with $D(H_2)\leq 0$. Let $H_3$ be the graph obtaind from $H'$ be contracting $V_1(H_2)\subseteq V_1(H')$ and $V_2(H_2)\subseteq V_2(H')$ to a single point $v_1$ and $v_2$ and then reducing multiple edges. Observe that $|E(H_2)|\geq 1$. It is clear that $H_3$ is a homomorphic image of $H'$ in which $v_1$ and $v_2$ are connected. Using $|V_i(H_3)|=|V_i(H')|-|V_i(H_2)|+1$ we have that $$D(H_3)-D(H')=(1-\beta)(|E(H')|-|E(H_3)|)+\sum_{i=1}^2\alpha_i(1-|V_i(H_2)|)=$$
$$(1-\beta)(|E(H')-|E(H_3)|-|E(H_2)|)+1-D(H_2).$$ Since $(1-\beta)$ and $-D(H_2)$ are non-negative it is enough to show that $|E(H')|+1/(1-\beta)\geq |E(H_3)|+E(H_2)|$. Let $\phi\in\Hom(H',H_3)$ be the homomorphism constructed above. We have that $|E(H')|=\sum_{e\in E(H_3)}|\phi^{-1}(e)|$. notice that for $e=(v_1,v_2)$ we have that  $|\phi^{-1}(e)|=|E(H_2)|$ and thus $|E(H')|\geq |E(H_3)|+|E(H_2)|-1$. Using that $1/(1-\beta)>1$ the proof is complete.
\end{proof}

\medskip

\noindent{\it Proof of theorem \ref{quasi-thm}.}~ Let us define the random variables $X_n:=|\Hom(H,G_n)|, Y_n:=|\Hom(P_1,G_n)|$ . We have that $$h(H,G_n)=(\alpha_1|V_1(H)|+\alpha_2|V_2(H)|-\log X_n/\log n)(1-\log Y_n/\log n)+o(1).$$ where the error $o(1)$ comes from the rounding error between $n^{\alpha_i}$ and $\lceil n^\alpha_i\rceil$. It remains to prove that $\log Y_n/\log_n$ converges to $\beta$ and $\log X_n/\log n$ converges to 
\begin{equation}\label{quasi-eq2}
\alpha_1|V_1(H)|+\alpha_2|V_2(H)|-(1-\beta)R(\beta,\alpha,H)
\end{equation}
 in probability. The first statement follows (by using lemma \ref{logcheb}) from the fact that $Y_n$ is the sum of $n+o(n)$ independent random variables that are the characteristic functions of the edges in $G_n$ and so $\mathbb{E}(Y_n)=n^{\beta}(1+o(1))$ and $\sigma(Y_n)=n^{\beta/2}(1+o(1))$.
 
 Observe that (\ref{quasi-eq2}) is equal to the maximum $D$ of $D(H')$ where $H'$ runs through the homomorphic images of $H$. Let us choose a maximizer $H'$. By lemma \ref{quasi-lem1} we have that $M(H')>0$. Thus by lemma \ref{quasi-lem2} we obtain that $\log |{\rm Hom}_0(H',G_n)|/\log n$ converges to $D$ in probability. Using that $|{\rm Hom}_0(H',G_n)|\leq|\Hom(H,G_n)|$ we obtain that $\mathbb{P}(\log X_n/\log n\leq D-\epsilon)=o(1)$ for every $\epsilon>0$.  To prove the upper bound notice that $|\Hom(H,G_n)|=\sum_K |\Hom_0(K,G_n)|$ where $K$ runs through the homomorphic images of $H$. Note that fro each fix homomorphic image $K$ we have that $\mathbb{E}(|\Hom_0(K,G_n)|)=n^{D(K)+o(1)}$ (see the proof of lemma \ref{quasi-lem2}). This implies that $\mathbb{E}(|\Hom(H,G_n)|)=O(n^{D(K)+o(1)})$. This implies by Markov's inequality that $\mathbb{P}(\log X_n/\log n\geq D+\epsilon=o(1)$.
 
\medskip

\begin{question} In general we have in $\mathcal{L}$ that $h(C_4,W)\leq 4$. In the spirit of the famous Chung-Graham-Wilson theorem \cite{CGW} it is interesting to study what happens at the extremal value $h(C_4,W)=4$. It is easy to see that $h(C_4,R(\beta,1/2))=4$ for every $3/4\leq\beta\leq 1$. Is it true that $h(C_4,W)=4$ implies that $W$ is a convex combination of quasi-random elements $R(\beta,\alpha)$ in $\mathcal{L}$?
\end{question}

The next question is related to Sidorenko's conjecture:

\begin{question} Is $R(\beta,\alpha)$ an ergodic element (extreme point) in $\mathcal{L}$?
\end{question}

\section{Applications}

Our results on log-convergence and $h^*$-convergence create an interesting link between graph theory, information theory and group theory. We demonstrate this link by some applications.

For a bipartite graph $H$ let $c(H)$ be the smallest real number such that $t(H,G)\geq t(P_1,G)^{c(H)}$ holds for every graph $G\in\mathcal{B}$. A famous conjecture of Sidorenko says that $c(H)=|E(H)|$ holds for every bipartite graph and it is checked for various families of graphs.  Independently from the fact whether Sidorenko's conjecture is true or false in general it is an important problem in extremal combinatorics to determine $c(H)$ for every bipartite graph. It is clear that using our notation $c(H)=\sup_{G\in\mathcal{B}_0} h(H,G)$. The next theorem gives an information theoretic and a group theoretic characterization for $c(H)$.

\begin{theorem}\label{thm-app1} We have for an arbitrary bipartite graph $H$ (with no isolated point) that
$$\sup_{G\in\mathcal{B}_0} h(H,G)~=~\sup_{G,T_1,T_2} h(H,\mathcal{G}(G,T_1,T_2))~=~\sup_{X=(X_1,X_2)} h^*(H,X)$$
where in the second expression  $(G,T_1,T_2)$ runs through all triples of finite groups with $T_1,T_2\leq G$ and in the third expression $X=(X_1,X_2)$ runs through all finite joint distributions. 
\end{theorem}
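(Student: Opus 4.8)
The plan is to prove the chain of equalities by establishing two inequalities in each direction, using the machinery already developed. Write $A := \sup_{G\in\mathcal{B}_0} h(H,G)$, $B := \sup_{G,T_1,T_2} h(H,\mathcal{G}(G,T_1,T_2))$, and $C := \sup_{X=(X_1,X_2)} h^*(H,X)$. Since edge-vertex transitive graphs form a sub-collection of $\mathcal{B}_0$ (by Lemma \ref{lem-bas1} they are exactly the graphs $\mathcal{G}(G,T_1,T_2)$), the inequality $B\leq A$ is immediate. The inequality $C\leq A$ is the heart of the matter and follows from Theorem \ref{main}: given any finite joint distribution $X=(X_1,X_2)$, that theorem produces a sequence of (edge-vertex transitive) graphs $\{G_i\}$ with $\lim_{i\to\infty} h(H,G_i) = h^*(H,X)$, so $h^*(H,X)\leq \sup_i h(H,G_i)\leq A$; taking the supremum over $X$ gives $C\leq A$. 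Moreover the graphs $G_i$ in Theorem \ref{main} are edge-vertex transitive, so in fact $h^*(H,X) = \lim_i h(H,\mathcal{G}(G_i',T_1^i,T_2^i))\leq B$, which gives $C\leq B$ as well.

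For the reverse direction it suffices to show $A\leq C$, since combined with $C\leq B\leq A$ this forces all three to be equal. First I would reduce to comparing $h(H,G)$ with $h^*(H,G)$: by Lemma \ref{lem-inflim1}, for every $G\in\mathcal{B}_0$ we have $h(H,G)\leq h^*(H,G) = h^*(H,X_G)$ where $X_G=(X_1,X_2)$ is the uniform random edge of $G$. Hence $h(H,G)\leq \sup_{X} h^*(H,X) = C$ for every $G\in\mathcal{B}_0$, and taking the supremum over $G$ yields $A\leq C$. This closes the loop: $A\leq C\leq B\leq A$, so $A=B=C$.

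Putting the pieces together, the logical skeleton is short: $B\leq A$ is trivial; $C\leq B$ is Theorem \ref{main} (noting the approximating graphs are edge-vertex transitive); and $A\leq C$ is Lemma \ref{lem-inflim1} together with the definition of $C$ as a supremum over all finite joint distributions (the random-edge distribution $X_G$ being one admissible choice). One should take a moment to handle the degenerate cases consistently — if $H$ is a single edge then $h(H,-)\equiv h^*(H,-)\equiv 1$ and all three suprema equal $1=|E(H)|$; and the conventions in Definition \ref{infdens} and after Lemma \ref{kozott} ensure $h$ and $h^*$ are defined on all of $\mathcal{B}_0$, so no infinities or undefined values intrude.

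I expect the main obstacle — already discharged by the results we are permitted to cite — to be the direction $C\leq A$, i.e. realizing an abstract entropy-maximizing distribution as a limit of honest finite (transitive) graphs; this is precisely the content of Theorem \ref{main}, whose proof relies on the orbit-counting argument showing that the logarithm of the homomorphism count, normalized by $n!$, converges to the maximal entropy $m(H,X)$. Granting that theorem, the present statement is essentially a bookkeeping of inequalities, and I would present it in exactly the three-step form above.
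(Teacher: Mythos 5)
Your proposal is correct and follows essentially the same route as the paper: the paper also deduces the equalities from Theorem \ref{main} (realizing $h^*(H,X)$ as a limit over edge-vertex transitive graphs, giving the upper bounds by $A$ and $B$) together with Lemma \ref{lem-inflim1} (giving $h(H,G)\leq h^*(H,X_G)$ and hence $A\leq C$), citing Theorem \ref{thm-inflim1} only as a packaged form of these two ingredients. The bookkeeping of inequalities you give is the intended argument.
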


\begin{proof} We have by theorem \ref{thm-inflim1} that the last two quantities coincide. Theorem \ref{main} implies that the first quantity is at least as big as the second one and lemma \ref{lem-inflim1} implies that the second quantity is at least as big as the first one.
\end{proof}

The next corollary establishes Sidorenko's conjecture as a simple entropy inequality involving entropy maximizers. Note that since Sidorenko's conjecture was checked for numerous bipartite graphs corollary \ref{cor-app1} yields a number of new inequalities in information theory. 

\begin{corollary}\label{cor-app1} A bipartite graph $H$ (with no isolated point) satisfies Sidorenko's conjecture if and only if 
$$m(H,X)\geq |E(H)|\mathbb{H}(X)-\sum_{i=1}^2\sum_{v\in V_i(H)}({\rm deg}(v)-1)\mathbb{H}(X_i)$$
holds for every finite joint distribution $X=(X_1,X_2)$.
\end{corollary}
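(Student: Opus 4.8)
The plan is to unwind the definitions so that the displayed inequality becomes a restatement of the inequality $h^*(H,X)\le |E(H)|$, and then to invoke Theorem~\ref{thm-app1} to transfer this to the statement that $c(H)=\sup_{G\in\mathcal{B}_0}h(H,G)=|E(H)|$, which is exactly Sidorenko's conjecture for $H$. So the whole corollary is really a bookkeeping exercise converting between the quantities $h^*$, $d^*$ and $m$.

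First I would recall from Definition~\ref{infdens} that
$d^*(H,X)=-m(H,X)+\mathbb{H}(X_1)|V_1(H)|+\mathbb{H}(X_2)|V_2(H)|$
and that $d^*(P_1,X)=\mathbb{H}(X_1)+\mathbb{H}(X_2)-\mathbb{H}(X)=I(X_1;X_2)$ (the mutual information of $X$), while $h^*(H,X)=d^*(H,X)/d^*(P_1,X)$ whenever $d^*(P_1,X)\neq 0$, with $h^*(H,X):=|E(H)|$ in the independent case. The inequality $h^*(H,X)\le |E(H)|$ is therefore equivalent (when $X$ is not independent, so $d^*(P_1,X)>0$) to
$d^*(H,X)\le |E(H)|\,d^*(P_1,X)$,
i.e.
$-m(H,X)+\mathbb{H}(X_1)|V_1(H)|+\mathbb{H}(X_2)|V_2(H)| \le |E(H)|\bigl(\mathbb{H}(X_1)+\mathbb{H}(X_2)-\mathbb{H}(X)\bigr).$
Rearranging, this says
$m(H,X)\ge |E(H)|\,\mathbb{H}(X) - \bigl(|E(H)|-|V_1(H)|\bigr)\mathbb{H}(X_1) - \bigl(|E(H)|-|V_2(H)|\bigr)\mathbb{H}(X_2),$
and since $\sum_{v\in V_i(H)}(\deg(v)-1)=|E(H)|-|V_i(H)|$ (summing degrees over one color class counts each edge once), this is precisely the displayed inequality in the corollary. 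The independent case $\mathbb{H}(X)=\mathbb{H}(X_1)+\mathbb{H}(X_2)$ makes both sides equal ($m(H,X)=\sum_i\mathbb{H}(X_i)|V_i(H)|$ by Lemma~\ref{lem-emax2} applied edgewise, or directly), so it is covered too and needs no separate $h^*$ convention.

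Next I would argue the equivalence with Sidorenko's conjecture. By Theorem~\ref{thm-app1}, $\sup_{G\in\mathcal{B}_0}h(H,G)=\sup_{X}h^*(H,X)$; and Sidorenko's conjecture for $H$ is exactly the statement $\sup_{G\in\mathcal{B}_0}h(H,G)=|E(H)|$ (the inequality $\sup\ge|E(H)|$ always holds, e.g. by taking $G$ to be an $m$-fold blow-up of $P_1$, or since $h^*(H,X)\ge 1$ is not enough --- rather one uses that for a tree $T$ spanning one color class $h^*(T,X)=|V_1(H)|$ and more directly that the complete bipartite graph attains the blow-up bound; in any case the standard fact is that $h$ can be made to approach $|E(H)|$, and Lemma~\ref{lem-emax4} with disjoint unions shows $h^*$ can equal $|E(H)|$ on forests). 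Thus Sidorenko holds for $H$ iff $h^*(H,X)\le |E(H)|$ for every finite joint distribution $X=(X_1,X_2)$, which by the computation above is the claimed entropy inequality. I would then note that the lower bound direction $h^*(H,X)\ge 1$ from Lemma~\ref{lem-emax1} guarantees the supremum is finite and $\ge 1$, but what we need is just the clean equivalence of the upper bound $|E(H)|$ with the displayed $m$-inequality.

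The only real subtlety --- and the step I would be most careful about --- is handling the degenerate case $d^*(P_1,X)=0$, i.e.\ $X_1,X_2$ independent, so that the division defining $h^*$ is the $0/0$ situation governed by the convention $h^*(H,X):=|E(H)|$. I would check directly that in this case $d^*(H,X)=0$ as well (independence propagates: the independent coupling of the marginals lies in $Q(H,X)$ and clearly maximizes entropy, giving $m(H,X)=\sum_i|V_i(H)|\mathbb{H}(X_i)$, hence $d^*(H,X)=0$), so that both sides of the displayed inequality are equal and it holds as an equality --- consistent with, but not relying on, the convention $h^*=|E(H)|$. Everything else is the routine algebra above together with the identity $\sum_{v\in V_i(H)}(\deg(v)-1)=|E(H)|-|V_i(H)|$; no genuinely new estimate is needed beyond Theorem~\ref{thm-app1}.
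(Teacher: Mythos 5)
Your proposal is correct and follows exactly the paper's route: the paper's proof is the one-line observation that the displayed inequality is, after unwinding the definitions of $m$, $d^*$ and $h^*$ (using $d^*(P_1,X)=I(X_1;X_2)$ and $\sum_{v\in V_i(H)}(\deg(v)-1)=|E(H)|-|V_i(H)|$), equivalent to $h^*(H,X)\leq|E(H)|$, which Theorem~\ref{thm-app1} identifies with Sidorenko's conjecture for $H$. Your extra care with the degenerate independent case $d^*(P_1,X)=0$ is a sound addition but does not change the argument.
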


\begin{proof} Using the definition of $h^*$ the inequality is trivially equivalent with $h^*(H,X)\leq |E(H)|$ which is equivalent with Sidorenko's conjecture according to theorem \ref{thm-app1}. 
\end{proof}

The next corollary of theorem \ref{thm-app1} puts Sidorenko's conjecture into a group theoretic context.

\begin{corollary}\label{cor-app2} A bipartite graph $H$ satisfies Sidorenko's conjecture if and only if $$t(H,\mathcal{G}(G,T_1,T_2))\geq t(P_1,\mathcal{G}(G,T_1,T_2))^{|E(H)|}$$ holds for every triple $(G,T_1,T_2)$ where $T_1,T_2$ are subgroups in the finite group $G$. 
\end{corollary}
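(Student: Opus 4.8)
The plan is to obtain this directly from Theorem~\ref{thm-app1} by a formal manipulation. By Lemma~\ref{lem-bas1} the graphs $\mathcal{G}(G,T_1,T_2)$ are exactly the edge-vertex transitive graphs in $\mathcal{B}$, and each of them contains the edge $(T_1,T_2)$, hence lies in $\mathcal{B}_0$; so the family of graphs over which the inequality in the corollary ranges is precisely the one appearing in the middle supremum of Theorem~\ref{thm-app1}. First I would translate, for a fixed triple $\mathcal{G}:=\mathcal{G}(G,T_1,T_2)$, the stated homomorphism inequality into a statement about $h$. If $t(P_1,\mathcal{G})<1$, i.e.\ $d(P_1,\mathcal{G})>0$, then taking $-\log$ and dividing by $d(P_1,\mathcal{G})$ gives
\[
t(H,\mathcal{G})\geq t(P_1,\mathcal{G})^{|E(H)|}\ \Longleftrightarrow\ d(H,\mathcal{G})\leq |E(H)|\,d(P_1,\mathcal{G})\ \Longleftrightarrow\ h(H,\mathcal{G})\leq |E(H)| ,
\]
whereas if $t(P_1,\mathcal{G})=1$ then $|E(\mathcal{G})|=|V_1(\mathcal{G})|\,|V_2(\mathcal{G})|$, so $\mathcal{G}$ is complete bipartite, $t(H,\mathcal{G})=1$, and both the homomorphism inequality and the inequality $h(H,\mathcal{G})\leq|E(H)|$ (with the convention $h(H,\mathcal{G}):=|E(H)|$ introduced after Lemma~\ref{kozott}) hold trivially. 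Thus for every triple the displayed inequality of the corollary is equivalent to $h(H,\mathcal{G}(G,T_1,T_2))\leq|E(H)|$.

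Next I would assemble the equivalence. Sidorenko's conjecture for $H$ is the statement $t(H,G)\geq t(P_1,G)^{|E(H)|}$ for all $G\in\mathcal{B}$; applying the same $-\log$ manipulation to a general $G\in\mathcal{B}_0$ (dividing by $d(P_1,G)$ when $d(P_1,G)>0$, the inequality being trivial when $G$ is complete bipartite and vacuous for edgeless $G$ outside $\mathcal{B}_0$), this is equivalent to $h(H,G)\leq|E(H)|$ for all $G\in\mathcal{B}_0$, i.e.\ to $c(H)=\sup_{G\in\mathcal{B}_0}h(H,G)\leq|E(H)|$. Theorem~\ref{thm-app1} identifies this supremum with $\sup_{G,T_1,T_2}h(H,\mathcal{G}(G,T_1,T_2))$, and a supremum of real numbers is $\leq|E(H)|$ exactly when every member of the family is. By the first step this last condition says precisely that $t(H,\mathcal{G}(G,T_1,T_2))\geq t(P_1,\mathcal{G}(G,T_1,T_2))^{|E(H)|}$ for every triple $(G,T_1,T_2)$, which is the assertion of the corollary.

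I do not expect a serious obstacle: the entire content has already been packaged into Theorem~\ref{thm-app1}, and what remains is bookkeeping. The one point that needs care is the degenerate case $t(P_1,\mathcal{G})=1$ (equivalently $T_1T_2=G$), where $d(P_1,\mathcal{G})=0$ and $h(H,\mathcal{G})$ is not literally a ratio of logarithms; there one falls back on the convention for $h$ on complete bipartite graphs and verifies the homomorphism inequality directly, which is immediate since then $t(H,\mathcal{G})=1$.
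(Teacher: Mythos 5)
Your proposal is correct and follows the same route the paper intends: the corollary is stated as a direct consequence of Theorem~\ref{thm-app1} (the paper gives no separate proof), obtained exactly by rewriting both Sidorenko's inequality and the displayed group-theoretic inequality as $h(H,\cdot)\leq|E(H)|$ and comparing the first two suprema in that theorem. Your extra care with the degenerate case $d(P_1,\mathcal{G})=0$ is a welcome bit of bookkeeping the paper leaves implicit.
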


It is worth mentioning that corollary \ref{cor-app2} implies various known results on Sidorenko's conjecture. For example if $H$ is a tree then trivially $t(H,G)=t(P_1,G)^{|E(H)|}$ holds in any edge-vertex transitive graph and thus corollary \ref{cor-app2} immedieatley implies Sidorenko's conjecture for trees which is not a trivial result. (Note that for paths Sidorenko's conjecture was first proved in a paper by Blackley-Roy in \cite{BR}.)

Another direct implication of corollary \ref{cor-app2} is that if a bipartite graph $H$ is obtained by gluing two graphs $H_1$ and $H_2$ along an edge and $H_1$ and $H_2$ satisfy Sidorenko's conjecture then $H$ also satisfies Sidorenko's conjecture. This was first proved in \cite{LS} but it also follows from the fact that $t(H,G)=t(H_1,G)t(H_2,G)/t(P_1,G)$ holds if $G$ is edge-vertex transitive.

\section{Examples}

\noindent{\bf Convergenet sequences of dense graphs}~Let $\{G_i\}_{i=1}^\infty$ be a convergent graph sequence in  $\mathcal{B}_0$ such that $\lim_{i\to\infty} t(P_1,G_i)>0$. Then it is clear that $\{G_i\}_{i=1}^\infty$ is log-convergent. 

\medskip

\noindent{\bf Hypercubes}~Let us fix $0<\alpha<1$. Let us denote by $G_n$ the bipartite graph on the vertex set $\{0,1\}^n$  in which two vectors are connected if their Hamming distance $d$ is an odd number satisfying $|d/n-\alpha|\leq \epsilon_n$ for some sufficiently slowly decreasing sequence  $\{\epsilon_n\}_{n=1}^\infty$ with $\lim_{n\to\infty}\epsilon_n=0$. We can view $G_n$ as an element in $\mathcal{B}$ by labeling the two color classes with $1$ and $2$.
It can be shown using methods from the present paper that $\lim_{n\to\infty} h(H,G_n)=h^*(H,X)$ holds for every $H\in\mathcal{B}_0$ where in the joint distribution $X=(X_1,X_2)$ both marginals $X_1$ and $X_2$ are uniform on $\{0,1\}$ and  $\mathbb{P}(X_1\neq X_2)=\alpha$.

\medskip

\noindent{\bf Bounded degree graphs}~Let $G_n$ be a growing sequence of graphs in $\mathcal{B}$ with maximum degree $m$ and minimum degree $1$. Assume for simplicity that $|V_1(G_n)|=|V_2(G_n)|=n$. We have that $t(H,G_n)$ is constant times $n^{c(H)-|V(H)|}$ where $c(H)$ denotes the number of connected components in $H$. it follows that the log-limit object is represented by the graph parameter $f(H):=|V(H)|-c(H)$. In other words $f(H)$ is the number of edges in a spanning forest of $H$. Note that $f=R(1/2,1/2)$ and thus $G_n$ is a quasi-random sequence. 

\medskip

\noindent{\bf Projective planes}~Incidence graphs of finite projective planes provide important examples in extremal combinatorics. They are examples for interesting sparse graphs. Let $p$ be a prime number and let $PG(2,p)$ be the projective plane over the prime field $\mathbb{F}_p$. Let $G_p$ denote the incidence graph between points and lines in $PG(2,p)$. We denote by $V_1(G_p)$ the set of points and by $V_2(G_p)$ the set of lines in $PG(2,p)$. We have that $|V_1(G_p)|=|V_2(G_p)|=p^2+p+1$. Furthermore we have that $|E(G_p)|=(p+1)(p^2+p+1)$. This means that $|E(G_p)|$ is roughly of size $|V(G_p)|^{3/2}$. By hand we calculated that $h(H,G_p)$ converges to $R(3/4,1/2,1/2,H)$ for various small graphs $H$.

\begin{question} Is it true that the graphs $G_p$ converge to $R(3/4,1/2)$?
\end{question}

\medskip

\noindent{\bf Heisenberg graphs}~Let $U_p$ denote the Heisenberg group (group of upper uni-triangular matrices in dimension $3$) over the field $\mathbb{F}_p$ with $p$-elements. Let $T_{1,p}$ denote the subgroup of matrices $M\in U_p$ with $M_{1,3}=M_{2,3}=0$ and let $T_{2,p}$ denote the subgroup of matrices $M\in U_p$ with $M_{1,2}=M_{1,3}=0$. Note that $|T_{1,p}|=|T_{2,p}|=p,~|U_p|=p^3$ and $T_{1,p}\cap T_{2,p}=\{1\}$ hold. We call $G_p:=\mathcal{G}(U_p,T_{1,p},T_{2,p})$ the Heisenberg graph over the field $\mathbb{F}_p$. One can calculate that for a connected graph $H\in\mathcal{B}$ the size of the homomorphism set $\Hom(H,G_p)$ is $p$-times the number of maps $f:V(H)\rightarrow\mathbb{F}_p$ with the property that $f(v_1)f(v_2)-f(v_2)f(v_3)+\dots -f(v_n)f(v_1)=0$ holds for every cycle $v_1,v_2,\dots,v_n,v_1$ in $H$. In particular we have that $|\Hom(C_4,G_p)|=p^3(2p-1)$ and thus $t(C_4,G_p)=(2p-1)/p^5$. Using the fact that $t(P_1,G_p)=1/p$ we obtain that $\lim_{p\to\infty}h(C_4,G_p)=4$.

\section{The graph setting and concluding remarks}\label{remarks}

Any graph $G$ can be regarded as a symmetric subset in $V(G)\times V(G)$ and thus it can be represented by a graph in the bipartite setting. More precisely $G$ is represented by the bipartie graph $G'\in\mathcal{B}$ in which the two color classes are identical copies of $V(G)$ and each edge $(v,w)$ of $G$ is represented by two edges $(v,w)$ and $(w,v)$. This representation preserves densities of bipartite graphs. Our results in the bipartite setting can be applied for graphs using this representation. The information theoretic analogue of the graph setting is the study of joint distributions $X=(X_1,X_2)$ where $X_1$ and $X_2$ take values in the same set $F$ and $X$ is symmetric in the sense that $(X_1,X_2)$ has the same distribution as $(X_2,X_1)$. It is important to mention that theorem \ref{main} can be stated for symmetric joint distributions with the stronger conclusion that there is sequence $\{G_i\}_{i=1}^\infty$ of edge-veretex transitive graphs (here edge transitivity means that it is transitive on the directed edges of $G$) such that $\lim_{i\to\infty} h(H,G_i)=h^*(H,G)$ holds for every bipartite graph $H$. (Note that this statement is formulated in the graph setting so $H$ is a normal graph that has no odd cycles.) 

The chapter on quasi-randomness becomes simpler in the graph setting. Recall that in the bipartite limit space quasi-randomness depended on two parameters: $\alpha,\beta$. Since graphs can be represented by bipartite graphs with equal color classes we have that $\alpha=1/2$ always holds and thus we obtain a one parameter family of quasi random objects depending only the sparsity exponent $\beta$. The random graph model corresponding to $\beta$ is a graph $G(n,\beta)$ on $n$ vertices where edges are independently created with probability $n^{2\beta-2}$. It is important that in the graph version of theorem \ref{quasi-thm} the test graphs $H$ are still required to be bipartite since \ref{quasi-lem1} uses this fact heavily.

It is potentially interesting to investigate power relations $d(H_1,G)/d(H_2,G)$ for non bipartite graphs $H_1,H_2$. These quantities are not uniformly bounded and are not necessarily defined since $t(H_i,G)$ can be $0$ even if $G$ is not empty. One can still force compactness by introducing the symbol $\infty$ and regard it as the one point compactification of $\mathbb{R}$. We can also use it if expressions are not defined. In this setting sequences that converge to $\infty$ become formally convergent. Furthermore every sequence $\{G_i\}_{i=1}^\infty$ has a subsequence such that $d(H_1,G)/d(H_2,G)$ is convergent for every pair of graphs $H_1,H_2$.
We are not sure how much from our statements can be saved to this setting.

We finish this chapter with a potential refinement of our convergence notions motivated by information theory.
We mentioned in the introduction that $d(H,G)$ can be interpreted as the relative entropy of the uniform measure on $\Hom(H,G)$ with respect to the uniform measure on all functions $V(G)\rightarrow V(H)$. It is very natural to investigate the relative entropy of a marginal of the uniform measure on $\Hom(H,G)$ on some subset of $V(H)$ in a similar way. This can be formulated as a graph parameter for labeled graphs in which the labels specify the marginal. We can extend the notion of log-convergence with the convergence of all these parameters normalized by $d(P_1,G)$. In a similar fashion we can extend the information theoretic parameters $d^*(H,X)$ to labeled graphs $H$ by regarding mutual information in marginal distributions in the entropy maximizing distributions in $Q(H,X)$.  It is not clear weather these notions are really finer than the original convergence notions however theorem \ref{main} generalizes naturally to these new parameters. 

\bigskip

\noindent{\bf Acknowledgement:}~The research leading to these results has received funding from the European Council under the European Union's Seventh Framework Programme (FP7/2007-2013) / ERC grant agreement n°617747.


\begin{thebibliography}{99}

\bibitem{B}~B. Bollob\'as, {\it Random Graphs}, Cambridge University Press, 2001

\bibitem{BCLSV}~C. Borgs, J.T. Chayes, L. Lov´asz, V.T. S´os, and K. Vesztergombi, {\it Convergent graph sequences
I: Subgraph frequencies, metric properties, and testing}, Adv in Math. 219 (2008), 1801–
1851.

\bibitem{CBCY} C. Borgs, Jennifer T. Chayes, H. Cohn, Y. Zhao, {\it An $L_p$ Theory of sparse graph convergence I: limits, sparse random graph models, and power law distributions}~Arxiv:1401.2906


\bibitem{BR}
G.R. Blakley, P.A. Roy, {A H\"older type inequality for symmetric matrices with nonnegative entries}, {\it Proc. Amer. Math. Soc.} {\bf 16} (1965) 1244-1245 

\bibitem{BS} I. Benjamini, O. Schramm, {\it Recurrence of distributional limits of Finite planar graphs},
Electron. J. Probab. 6 (2001), no. 23, 13


\bibitem{CGW}~F. Chung, R.L. Graham and R.M. Wilson, {\it Quasi-random graphs}, Combinatorica 9 (1989),
345–362.

\bibitem{L} L. Lov\'asz, {\it Large networks and graph limits}, AMS, 2012,  ISBN: 978-0-8218-9085-1

\bibitem{LS} X. Li, B. Szegedy, {\it On the logarithmic calculus and Sidorenko's conjecture}a, to appear

\bibitem{LSz} L. Lov\'asz, B. Szegedy, {\it Limits of dense graph sequences}, J. Combin. Theory Ser. B 96 (2006), no. 6, 933-957. 

\bibitem{LSz2} L. Lov\'asz, B. Szegedy, {\it Szemerédi's regularity Lemma for the analyst}, Geom. Funct. Anal. 17 (2007), no. 1, 252-270.

\bibitem{NP} J. Nesetril, P. Ossona de Mendez, {\it Sparsity}, Springer, 2012

\bibitem{R} A. Razborov, {\it On the Minimal Density of Triangles in Graphs}, Comb., Prob. and Comp. / Volume 17 / Issue 04 / July 2008,

\bibitem{Sid} A.F Sidorenko, {\it A correlation inequality for bipartite graphs}, {\it Graphs Combin.}~{\bf 9} (1993), 201--204

\bibitem{Sz} B. Szegedy, {\it An information theoretic approach to Sidorenko's conjecture}, http://arxiv.org/abs/1406.6738

\end{thebibliography}
\end{document}